\newcommand{\QQ}{\mathbf{Q}}
\newtheorem{conjecture}{Conjecture}
\newtheorem{theorem}{Theorem}
\newtheorem{corollary}[theorem]{Corollary}
\newtheorem{lemma}[theorem]{Lemma}
\theoremstyle{definition}
\newtheorem{problem}[theorem]{Problem}
\newcommand{\abs}[1]{\left\lvert#1\right\rvert}
\newcounter{counterclaim}[section]
\newtheoremstyle{claims}{\topsep}{\topsep}{}{}{\bfseries}{}{.5em}{}
\theoremstyle{claims}
\newtheorem{megaclaim}[counterclaim]{}
\newcommand{\claim}[2]{\begin{megaclaim}\label{#1} #2 \end{megaclaim}}
\newcommand{\refclaim}[1]{\ref{#1}}
\newenvironment{tikzgraph}
  {\begin{tikzpicture}
      [vertex/.style={circle, draw=black, fill=white, inner sep=0.5pt, minimum
        size=7.5pt},
       n/.style={circle, draw=black, fill=white, inner sep=0.5pt, minimum
        size=7.5pt},
       p/.style={circle, draw=black, fill=black, inner sep=0.5pt, minimum
        size=7pt},%
       t/.style={regular polygon,regular polygon sides=4, draw=black, fill=white, inner sep=0mm, minimum
        size=8pt},%
       s/.style={regular polygon,regular polygon sides=4, draw=black, fill=white, inner sep=0mm, minimum
        size=11pt},%
       edge/.style={semithick},%
       font=\scriptsize
      ]\begin{scope}}
  {\end{scope}\end{tikzpicture}}
\newdimen\R
\newdimen\RR
\title{Do triangle-free planar graphs have exponentially many $3$-colorings?\thanks{This work was done within the scope of the International Associated Laboratory STRUCO.}}
\author{Zdeněk Dvořák\thanks{Computer Science Institute, Charles University, Prague, Czech Republic.
Supported by the Center of Excellence -- Inst. for Theor. Comp. Sci., Prague, project P202/12/G061 of
Czech Science Foundation and by the project LL1201 (Complex Structures: Regularities in Combinatorics and
Discrete Mathematics) of the Ministry of Education of Czech Republic.
E-mail: \texttt{rakdver@iuuk.mff.cuni.cz}.
}\and
Jean-Sébastien Sereni\thanks{Centre National de la Recherche Scientifique, (LORIA), Vandœuvre-lès-Nancy, France. E-mail: \texttt{sereni@kam.mff.cuni.cz}.}}
\date{}
\begin{document}

\maketitle

\begin{abstract}
      Thomassen conjectured that triangle-free planar graphs have an exponential number
      of $3$-colorings. We show this conjecture to be equivalent to the following statement:
      there exists a positive real~$\alpha$ such that whenever~$G$ is a planar graph
      and~$A$ is a subset of its edges whose deletion makes~$G$ triangle-free, there
      exists a subset~$A'$ of~$A$ of size at least~$\alpha|A|$ such that $G-(A\setminus A')$
      is $3$-colorable. This equivalence allows us to study restricted situations, where we can
      prove the statement to be true.
\end{abstract}

\section{Introduction}\label{sec:intro} 
A now classical theorem of Grötzsch~\cite{grotzsch1959} asserts that every triangle-free planar graph
is $3$-colorable. This statement spurred a lot of interest and,
over the years, many ingenious proofs have been found~\cite{DvoKawTho,thom-torus,ThoShortlist}.
The new proofs are simpler than the original argument, and often target further developments --- algorithmic aspects or extension
to other surfaces. In particular, refining some of his arguments, Thomassen~\cite{thom-many} established that every
planar graph of girth at least five has exponentially many --- in terms of the number of vertices ---
list colorings provided all lists have size at least three. This statement cannot be extended to planar graphs of girth at least four,
that is, triangle-free planar graphs, as Voigt~\cite{voigt1995} exhibited
a triangle-free planar graph~$G$ along with an assignment~$L$ of lists of size
three to the vertices of~$G$ such that $G$ is not $L$-colorable.
However, it could still be true that triangle-free planar graphs admit
exponentially many $3$-colorings. This was actually conjectured in~2007 by Thomassen~\cite[Conjecture~2.1(b)]{thom-many}.
The formulation we give implicitly uses a theorem by Jensen and Thomassen~\cite[Theorem~10]{JT00} that
the $3$-color matrix of a planar graph has full rank if and only if the graph has no triangle.
\begin{conjecture}\label{conj:thom-many}
     There exists a positive real number~$\beta$ such that every triangle-free planar
     graph~$G$ has at least~$2^{\beta\abs{V(G)}}$ different $3$-colorings.
\end{conjecture}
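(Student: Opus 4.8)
The plan is not to attack Conjecture~\ref{conj:thom-many} directly, but to first trade it for a statement about \emph{repairing} non-triangle-free planar graphs by edge deletion, and then to establish that statement in increasingly large special cases. Concretely, I would first prove that Conjecture~\ref{conj:thom-many} is equivalent to the assertion that there is a constant $\alpha>0$ such that for every planar graph $G$ and every edge set $A\subseteq E(G)$ with $G-A$ triangle-free, some $A'\subseteq A$ with $\abs{A'}\ge\alpha\abs{A}$ has the property that $G-(A\setminus A')$ is $3$-colorable. The advantage of this reformulation is that triangle-freeness, the hypothesis of the conjecture, is a brittle global condition, whereas the edge-deletion statement is inductive: one can hope to peel off the edges of $A$ a few at a time, or to split $G$ along small separators, and combine the colorings multiplicatively.

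For the equivalence itself the engine is the Jensen--Thomassen theorem that the $3$-color matrix of a planar graph has full rank precisely when the graph is triangle-free. In one direction, if $H\colonequals G-A$ is triangle-free then, granting the conjecture, $H$ has at least $2^{\beta\abs{V(H)}}$ proper $3$-colorings; full rank of its $3$-color matrix says that the edges of $A$ impose ``independent'' affine constraints over $\mathbb{Z}/3\mathbb{Z}$ on the space of $3$-colorings of $H$, so a counting/linear-algebra argument yields a coloring of $H$ that is proper on a constant fraction of $A$. Conversely, from the edge-deletion statement one builds exponentially many colorings of a triangle-free planar graph $G$ by realizing $G$ as $G_0-A$ for a suitable planar $G_0$ in which $\abs{A}$ is linear in $\abs{V(G)}$ (for instance by inserting edges inside faces), repairing a constant fraction of $A$, and iterating; each repair multiplies the number of colorings by a factor bounded away from $1$. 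Translating between the constants $\alpha$ and $\beta$ is routine bookkeeping.

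With the reformulation available, I would prove the edge-deletion statement first when the edges of $A$ are ``spread out'', say when the triangles destroyed by $A$ are pairwise far apart: then $G$ decomposes into pieces each carrying only $O(1)$ edges of $A$, Grötzsch's theorem together with Thomassen's exponential list-coloring theorem for the triangle-free remainders produces $3$-colorings of the pieces, and deleting one edge of $A$ per piece already retains a constant fraction. The real work is to reduce the general case to this one: repeatedly locate a bounded-size reducible configuration meeting several edges of $A$ --- reusing the reducible configurations from the modern proofs of Grötzsch's theorem (the Dvořák--Král--Thomassen framework, combined with the structure of $4$-critical planar graphs) --- and recurse, so that after bounded depth only the spread-out situation remains.

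The step I expect to be the obstacle is exactly this last reduction, namely controlling many edges of $A$ clustered around a single vertex or inside a single short face: deleting a constant fraction of a large clustered set need not restore $3$-colorability unless the surviving edges happen to be compatible with a proper coloring, and there is no obvious local rule selecting which fraction to keep. Moreover any valid argument must be genuinely special to lists of size exactly three, since Voigt's construction rules out the list analogue; this strongly suggests keeping the description of $3$-colorings as nowhere-zero $\mathbb{Z}/3\mathbb{Z}$-tensions at the center of the repair step rather than relying on purely combinatorial Kempe-chain recoloring. For this reason I would expect a complete resolution to stay out of reach, and the realistic target --- the one the equivalence is designed to enable --- is to settle the edge-deletion statement in the restricted regimes above.
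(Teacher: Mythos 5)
The statement you were asked to prove is Conjecture~\ref{conj:thom-many} itself, which this paper does not prove and which remains open; your proposal, by your own closing admission, does not prove it either, so as a proof it has an irreparable gap by design. What you do correctly reconstruct is the paper's strategic frame: reformulate the conjecture as the edge-deletion statement (the paper's (TRIA), shown equivalent to (RGEN) via Theorem~\ref{thm-eq-neq} and to the conjecture via Theorem~\ref{thm-eq-pr-exp}), then attack special cases. Note, though, that the special case the paper actually settles is much narrower than your ``spread-out triangles'' regime: it is the case where all deleted edges are incident with a single common vertex (Corollary~\ref{cor-single}), and even that requires a lengthy list-coloring argument.

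Even restricted to the equivalence, which is the one thing that \emph{can} be proved, your sketch omits the load-bearing mechanism of the hard direction. Producing one $3$-coloring of $G-(A\setminus A')$ is producing one coloring; ``repairing a constant fraction of $A$ and iterating'' does not multiply the number of $3$-colorings of the original triangle-free graph, and inserting edges into faces gives no control over where the repaired edges end up. The paper instead (i) proves that a $3$-coloring with $q$ bichromatic $4$-faces forces at least $2^{(s^++8+q)/6}$ colorings by swapping two colors on components of the two-colored subgraphs --- ironically, exactly the Kempe-chain recoloring you propose to avoid (Lemma~\ref{lemma-manycolor}) --- and (ii) uses a maximal $5$-cycle decomposition (Corollary~\ref{cor-decomp}) and the $11$-suburb analysis (Lemma~\ref{lemma-rearr}) to locate linearly many disjoint sites where a satisfied equality request can be rearranged into a bichromatic $4$-face. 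Your easy direction is also not the paper's: the Jensen--Thomassen full-rank theorem is invoked only to justify the formulation, not in the proof, and it is unclear that ``independent affine constraints over $\mathbb{Z}/3\mathbb{Z}$'' yields a coloring proper on a constant fraction of $A$; the paper instead clones each request $N$ times and compares coloring counts of the blown-up graph. So: right reformulation, but both halves of the equivalence are missing their essential lemmas, and the conjecture itself is of course not established.
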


As reported earlier, Thomassen~\cite{thom-many}
proved the statement under the additional assumption that~$G$ has no $4$-cycle. In addition,
he proved that every triangle-free planar graph~$G$ admits at least $2^{\abs{V(G)}^{1/12}/20\,000}$ different
$3$-colorings. This lower bound, which is sub-exponential, was later improved by Asadi, Dvořák, Postle and Thomas~\cite{submany}
to~$2^{\sqrt{\abs{V(G)}/212}}$. In addition, 
Dvořák and Lidick\'y~\cite[Corollary~1.3]{cylgen-part2} proved the existence of an integer~$D$ such that
every triangle-free planar graph~$G$ with maximum degree at most~$\Delta$ has at least~$3^{\abs{V(G)}/\Delta^D}$ different $3$-colorings,
thereby confirming the analogue of Conjecture~\ref{conj:thom-many} for all classes of triangle-free planar graphs with bounded maximum degree.
Actually, this statement follows from another result of theirs~\cite[Corollary~1.2]{cylgen-part2}, which states the existence of an integer~$D$
such that if $G$ is a triangle-free planar graph
and $V'\subset V(G)$ is a subset of vertices of~$G$ such that every two distinct vertices in~$V'$ are at distance at least~$D$ in~$G$,
then any $3$-precoloring of the vertices in~$V'$ extends to a $3$-coloring of the whole graph~$G$. As we will see later on, precoloring
extension might be a useful tool to study the number of $3$-colorings of triangle-free planar graphs.

Summing-up, we see that Conjecture~\ref{conj:thom-many} is still widely open. Our goal is to show the equivalence between
Conjecture~\ref{conj:thom-many} and another statement dealing with a variation---a very natural one, in our opinion---of
the usual notion of coloring, which we now introduce.

For a function~$w\colon X\to \QQ^+$ and a set~$X'\subseteq X$,
let~$w(X')=\sum_{x\in X'} w(x)$.  A \emph{request graph} $(G,R_=,R_{\neq},w)$
consists of a graph~$G$, disjoint sets~$R_=$ and~$R_{\neq}$ of vertices of~$G$ of
degree two such that $R_=\cup R_{\neq}$ is an independent set in~$G$, and
a function~$w\colon R_=\cup R_{\neq}\to \QQ^+$.
The vertices in~$R_=\cup R_{\neq}$
are referred to as the \emph{requests} or \emph{request vertices}.
Let~$\varphi$ be a proper coloring of~$G$.  We say that a vertex~$r\in R_=$ is \emph{satisfied} if both its
neighbors have the same color, and a vertex~$r\in R_{\neq}$ is \emph{satisfied} if
its neighbors have different colors.  For~$\alpha>0$, we say that
a $3$-coloring~$\varphi$ \emph{satisfies $\alpha$-fraction of the requests} if,
letting~$R'$ be the set of satisfied vertices in~$R_=\cup R_{\neq}$, we have
$w(R')\ge\alpha w(R_=\cup R_{\neq})$.  
The following problem arises from the work of Asadi~\emph{et al.}~\cite{submany}.
\begin{problem}\label{prob:adpt13}
      Is there a positive real number~$\alpha$ such that every
      planar triangle-free request graph admits
      a $3$-coloring satisfying $\alpha$-fraction of its requests?
\end{problem}

\noindent
As it turns out, Problem~\ref{prob:adpt13} admits a positive answer if and only if
Conjecture~\ref{conj:thom-many} is true.
\begin{theorem}\label{thm-eq-pr-exp}
The following assertions are equivalent.
\begin{itemize}
\item[\textrm{(RGEN)}] There exists a positive real number~$\alpha$ such that every
      planar triangle-free request graph admits a $3$-coloring that satisfies
      $\alpha$-fraction of its requests.
\item[\textrm{(EXP)}] There exists a positive real number~$\beta$ such that every
      planar triangle-free graph~$G$ has at least~$2^{\beta\abs{V(G)}}$
      $3$-colorings.
\end{itemize}
\end{theorem}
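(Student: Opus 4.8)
The plan is to prove the two implications separately, in each case via small local gadgets.

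For $(\text{EXP})\Rightarrow(\text{RGEN})$, I would first normalise the request graph. Clearing denominators makes all weights positive integers, and a request of weight $t$ may then be split into $t$ unit‑weight requests sharing the same two neighbours — a harmless change, since (the underlying graph being triangle‑free) those neighbours are non‑adjacent, so this creates only $4$‑cycles, which can be nested inside the same face; moreover an $R_{\neq}$‑request on the neighbours $a,b$ of a degree‑two vertex can be replaced by a new vertex $c$ adjacent to $a$ together with an $R_=$‑request on $\{c,b\}$, this last request being satisfiable exactly when $\varphi(a)\ne\varphi(b)$, so one may assume all requests lie in $R_=$ and have weight $1$. Given such a graph $(G,R_=,\varnothing,w)$ with $m$ requests and $n_0$ non‑request vertices, build $G'$ by adding, for each request vertex $r$ with neighbours $a_r,b_r$, a "fan" of $k$ new vertices each joined to $a_r$ and $b_r$, drawn nested in a face incident to $r$; then $G'$ is planar and triangle‑free with $n_0+(k+1)m$ vertices, and a $3$‑colouring of $G'$ is a $3$‑colouring $\chi$ of the non‑request part together with independent choices at the request vertices and the fans. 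Hence the number of $3$‑colourings of $G'$ equals $\sum_{\chi}2^{(k+1)s(\chi)}$, where $s(\chi)$ counts the requests satisfied by $\chi$ (a fan has $2^{k}$ extensions when $\chi(a_r)=\chi(b_r)$ and one otherwise). Comparing the lower bound $2^{\beta\abs{V(G')}}$ from (EXP) with the trivial upper bound $3^{n_0}\cdot 2^{(k+1)\max_{\chi}s(\chi)}$ yields $\max_{\chi}s(\chi)\ge\beta m-(\log_2 3)n_0/(k+1)$; taking $k$ large enough (it may depend on $G$, which is harmless) forces $\max_{\chi}s(\chi)\ge\beta m/2$, and extending the best $\chi$ to the request vertices gives a $3$‑colouring of $G$ satisfying a $\beta/2$‑fraction of the requests.

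For $(\text{RGEN})\Rightarrow(\text{EXP})$ the engine is a \emph{recolouring principle}: if $\varphi$ is a $3$‑colouring of a graph $H$ and $S$ is an independent set each of whose vertices has a monochromatic neighbourhood under $\varphi$, then the vertices of $S$ can be recoloured independently — each has two admissible colours, and independence of $S$ means the choices never clash — giving $2^{\abs S}$ distinct $3$‑colourings of $H$. It therefore suffices to produce, in every triangle‑free planar $H$ on $n$ vertices, a $3$‑colouring admitting such a set $S$ of size $\Omega(n)$. Repeatedly deleting vertices of degree at most one multiplies the colouring count by at least $2$ each time, so we may assume either that we have already deleted $\Omega(n)$ of them (and are done) or that $H$ has minimum degree at least two and $\Omega(n)$ vertices. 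If $H$ now has $\Omega(n)$ vertices of degree exactly two, pick $\Omega(n)$ of them forming an independent set $D$ (their induced subgraph has maximum degree at most two), attach to each $v\in D$ an $R_=$‑request on its two necessarily non‑adjacent neighbours, and invoke (RGEN): a $3$‑colouring satisfying an $\alpha$‑fraction of these requests has $\Omega(n)$ vertices of $D$ with monochromatic neighbourhoods, and the recolouring principle applies with $S$ equal to that set.

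The main obstacle is the complementary case, where after this cleanup $H$ has only $o(n)$ vertices of degree two — so almost all of its vertices have degree at least three, the prototypes being quadrangulations of the sphere. A single equality request per low‑degree vertex no longer forces a monochromatic neighbourhood (a degree‑three vertex needs two equalities, and an $\alpha$‑fraction of a large request family need not include both requests at any single vertex), so flexibility has to be wrung out of the dense face structure — for instance from the linearly many $4$‑faces — or one routes through the equivalent edge‑deletion statement from the abstract, to which (RGEN) is tied by placing an $R_{\neq}$‑request on the endpoints of each edge of $A$ inside $G-A$. I expect this dense, quadrangulation‑like case to absorb essentially all of the difficulty; everything else is bookkeeping to keep the auxiliary gadgets planar and triangle‑free.
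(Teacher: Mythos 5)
Your $(\text{EXP})\Rightarrow(\text{RGEN})$ direction is essentially the paper's argument (normalise to unit equality requests, clone each request vertex $k$ times, and compare the lower bound $2^{\beta\abs{V(G')}}$ with the upper bound $3^{n_0}\cdot 2^{(k+1)\max_\chi s(\chi)}$); it is correct as written.

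The converse direction has a genuine gap, and it is exactly the case you set aside as "the main obstacle". Your recolouring principle and the treatment of vertices of degree at most two are fine, but when almost every vertex has degree at least three --- e.g.\ for a quadrangulation of the sphere --- the recolouring principle cannot get started: a vertex of degree $d$ needs $d-1$ simultaneously satisfied equality requests before its neighbourhood becomes monochromatic, and a guarantee that an $\alpha$-fraction of a large request family is satisfied never concentrates enough satisfied requests at a single vertex. Two ideas are needed here that your sketch does not supply. First, a different mechanism for turning satisfied requests into exponentially many colourings: the paper proves (Lemma~\ref{lemma-manycolor}), via Euler's formula, that if a $3$-colouring has $q$ bichromatic $4$-faces then one of the three two-coloured subgraphs $G[V_{ab}]$ has at least $(q+8)/6$ components, each of which can be colour-swapped independently; so it suffices to exhibit one colouring with linearly many bichromatic $4$-faces. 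Second, a way to decide \emph{where} to put the requests so that satisfying them produces such faces: in a minimal counterexample every vertex of bounded degree lies on a $5$-cycle (Lemma~\ref{lemma-minc}, by contracting the neighbourhood of a vertex avoiding all $5$-cycles), which yields a laminar family of linearly many separating $5$-cycles; chains of eleven nested $5$-cycles that do not already double the number of colourings are shown by a case analysis (Lemma~\ref{lemma-rearr}) to contain a $4$-face with two non-adjacent vertices $x,y$ such that any colouring with $\varphi(x)=\varphi(y)$ can be locally rearranged to make some $4$-face bichromatic, and the equality requests are placed precisely on these pairs. Your closing remark that the quadrangulation-like case "absorbs essentially all of the difficulty" while "everything else is bookkeeping" is accurate as a diagnosis, but that case is the theorem, and the proposal does not prove it.
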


Before going any further, we pause to clarify the relation between~(RGEN) and
the statement given in the abstract of this article, namely:

\medskip

\begin{itemize}
      \item[(TRIA)] \emph{There is a positive real number~$\alpha$ such that for every planar
graph~$G$ and every subset~$X$ of edges such that $G-X$ is triangle-free,
there exists a $3$-coloring~$c$ of~$G-X$ such that at least~$\alpha|X|$
edges in~$X$ join vertices of different colors under~$c$.}
\end{itemize}


\noindent
It suffices to subdivide each edge in~$X$ by a vertex placed in~$R_{\neq}$
to see that (TRIA) is implied by~(RGEN).  We thus realize that (TRIA) is equivalent to
the special case of~(RGEN) where~$R_=$ is empty and $w$ assigns each vertex in~$R_{\neq}$ weight~$1$.
As we see below in Theorem~\ref{thm-eq-neq}, this special case is in fact equivalent
to~(RGEN), establishing the equivalence between~(RGEN) and~(TRIA).

Theorem~\ref{thm-eq-pr-exp} is proved in Section~\ref{sec:eq-exp}.
Request graphs allow for different ways to address Conjecture~\ref{conj:thom-many},
making it possible to focus on finding just one coloring subject to given constraints
rather than many.  It is unclear whether this will turn out to be advantageous, as Problem~\ref{prob:adpt13}
appears to be quite difficult.  For example, in Section~\ref{sec:atvert}, we consider the special case
under the additional assumption that there are only non-equality requests and all the requests are incident with the same vertex
(that is, $R_==\varnothing$ and all the vertices in~$R_{\neq}$ have a common neighbor).
We manage to establish the following.
\begin{corollary}\label{cor-single} Let~$\alpha_0=1/5058$.  Consider a request
      graph~$(G,\varnothing,R_{\neq},w)$, where $G$ is planar triangle-free.  If all
      vertices of~$R_{\neq}$ have a common neighbor, then there exists
      a $3$-coloring of~$G$ satisfying $\alpha_0$-fraction of the requests.
\end{corollary}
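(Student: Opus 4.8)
The plan is to prove Corollary~\ref{cor-single} by reducing it to a statement about \emph{precoloring extension} from a single vertex, and then to analyze the structure of a triangle-free planar graph near a vertex all of whose incident ``request paths'' we wish to satisfy. Let $v$ be the common neighbor of all vertices in $R_{\neq}$; each request $r\in R_{\neq}$ has $v$ as one neighbor and some other vertex $u_r$ as its second neighbor, and satisfying $r$ means that in the chosen $3$-coloring $\varphi$ we have $\varphi(u_r)\neq\varphi(v)$. So after fixing $\varphi(v)$, say $\varphi(v)=1$, the request $r$ is satisfied exactly when $u_r$ does not receive color $1$. Thus Corollary~\ref{cor-single} amounts to: there is a $3$-coloring of $G$ in which $v$ gets color $1$ and at least an $\alpha_0$-fraction (by weight) of the vertices $u_r$ avoid color $1$. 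Equivalently, delete the request vertices $R_{\neq}$, add $v$ back with its precoloring, and ask to $3$-color the remaining triangle-free planar graph $G'$ (with $v$ precolored) so that many prescribed vertices $u_r$ avoid a prescribed color.

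First I would set up a weighting/averaging argument to avoid worrying about exact weights: by a standard scaling and grouping reduction it suffices to handle the case where $w\equiv 1$ and $R_{\neq}=\{r_1,\dots,r_k\}$ with distinct second-endpoints $u_1,\dots,u_k$ (if two requests share the same $u_r$ they are satisfied or unsatisfied together, so we may merge them, adding their weights). Now fix $\varphi(v)$ arbitrarily to one of the three colors. For each $i$, the vertex $u_i$ is at distance $2$ from $v$ along the request path; note that $u_i$ could itself be adjacent to $v$ in $G$ or coincide with another $u_j$ — but independence of $R_=\cup R_{\neq}$ only constrains the request vertices, not the $u_i$. The key point is that we get to \emph{choose} which requests to satisfy, so it is enough to find \emph{one} large subset $S\subseteq\{1,\dots,k\}$, $|S|\geq\alpha_0 k$, and a $3$-coloring of $G$ with $\varphi(v)=1$ and $\varphi(u_i)\neq 1$ for all $i\in S$.

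The heart of the argument will be to invoke precoloring-extension machinery for triangle-free planar graphs: starting from the precolored vertex $v$ (and then, greedily, from a carefully chosen set of $u_i$'s precolored to avoid color $1$), one needs that such a precoloring extends to the whole graph. One cannot precolor all $u_i$ at once — Voigt's example shows list-coloring fails — so instead I would argue that $v$ has bounded degree ``effectively'' in the following sense: consider the cycles/faces around $v$ and use the fact (from the theory behind Grötzsch's theorem, e.g. reductions on $4$-cycles and $5$-cycles, as in \cite{DvoKawTho}) that a triangle-free planar graph with one precolored vertex and a bounded number of ``constrained'' vertices placed appropriately far apart admits an extension. Concretely, I would select a subset $S$ of the requests such that the corresponding $u_i$ lie on a common face or are otherwise pairwise ``non-interfering'', precolor $v$ and those $u_i$ (with $\varphi(u_i)=2$, say, which certainly avoids color $1$), delete $R_{\neq}$, and apply Grötzsch-type precoloring extension. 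The constant $\alpha_0=1/5058$ should then emerge from counting how large a non-interfering subset $S$ we can always extract — essentially a bound of the form $|S|\geq k/c$ where $c$ is the number of requests we might be forced to discard around each ``bad'' configuration (short cycles through $v$, multiplicities, etc.), together with the loss factors coming from the precoloring-extension lemma.

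The main obstacle I expect is exactly this extension step: precoloring $v$ together with several of the $u_i$ to color $2$ may create a non-extendable instance (a triangle-free planar graph with a precolored subgraph that is not $3$-colorable), because the $u_i$ can be close to each other and to $v$. Overcoming it will require either (a) a structural case analysis of the small configurations (short cycles and near-triangles through $v$) that obstruct extension, discarding a bounded number of requests per obstruction — this is where the somewhat large denominator $5058$ would come from — or (b) invoking a suitable known precoloring-extension theorem for triangle-free planar graphs with a precolored path or face and applying it after contracting the request structure. I would pursue route (a): enumerate the obstructions to extending a precoloring of $v$ plus one additional vertex precolored to avoid $\varphi(v)$, show each obstruction is confined to a bounded region and kills only boundedly many requests, greedily build $S$ while avoiding these regions, and track the constants to land at $\alpha_0=1/5058$.
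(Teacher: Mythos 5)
Your opening reduction is right and matches the paper: fix the color of the common neighbor $v$ (say color $3$), delete $v$ and the request vertices, and the task becomes to $3$-color the remaining triangle-free planar graph so that \emph{every} non-request neighbor of $v$ and a constant fraction (by weight) of the second neighbors of the requests avoid color $3$. But from that point on the proposal has a genuine gap: the entire content of the corollary lies in producing such a coloring, and the strategy you sketch would not do it. First, you treat the problem as if only the chosen subset $S$ of the $u_i$ carries a constraint, but the non-request neighbors of $v$ form a set of vertices that must \emph{all} avoid color $3$ --- this is a mandatory list-size-two constraint on possibly many vertices, not something you can discard or localize, and it interacts badly with the $u_i$ (the hard configurations in the paper are precisely paths $s\,t\,s'$ with $s,s'$ forced to $\{1,2\}$ and $t$ a demand vertex). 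Second, your plan to select pairwise ``non-interfering'' $u_i$ and invoke a precoloring-extension theorem (in the spirit of Dvo\v{r}\'ak--Lidick\'y, which needs the precolored vertices pairwise far apart) cannot yield a constant fraction: if all the $u_i$ cluster inside a ball of bounded radius, a far-apart subset may contain only one of them. The obstructions to extension are not confined to bounded regions each killing boundedly many requests, so the greedy discarding in your route (a) has no reason to terminate with a linear-sized $S$.

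The paper's actual route exploits a structural fact you do not use: after deleting $v$ and $R_{\neq}$, all constrained vertices lie on a \emph{single face}. This makes Thomassen's list-coloring theorem for girth-$5$ planar graphs (lists of size $2$ allowed on the outer face if they form an independent set) the right tool, rather than precoloring extension; list constraints on the outer face need no distance separation, only local independence conditions. Even so, the argument is long: one must first reduce to girth $5$ and to a ``polished'' configuration (losing a factor of $9$ via a Gr\"otzsch coloring combined with a distance-two coloring of the outer cycle), and then run an induction that repeatedly discards interfering demands using Naserasr's homomorphism to the Clebsch graph (vertices joined by a path of length $3$ get distinct colors among $16$), which is where the factors $16$ and ultimately $562=2+48+512$ and $5058=9\cdot 562$ come from. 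None of these ingredients --- the single-face observation, the list-coloring theorems, the polishing step, or the odd-distance coloring --- appears in your proposal, so the constant $\alpha_0$ you hope will ``emerge'' has no derivation.
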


As strong as the hypothesis of Corollary~\ref{cor-single} are,
the argument turns out to be unexpectedly involved.  Let~$v$ be a common neighbor
to all requests in~$R_{\neq}$, let~$T$ be the set of
vertices other than~$v$ adjacent to the requests in~$R_{\neq}$, and let~$S$ be the set of
non-request neighbors of~$v$.  Without loss of generality, we can give~$v$ color~$3$,
and thus we seek a coloring of the graph~$G'=G-v-R_{\neq}$ in which all vertices
in~$S$ and a constant fraction of the vertices in~$T$ only use colors from the list~$\{1,2\}$.

Since the vertices in~$S\cup T$ are incident with the same face of~$G'$, this is reminiscent
of a well-known result of Thomassen~\cite{thomassen1995-34} (Theorem~\ref{thm-3choos} below),
which implies that such a coloring exists whenever~$G'$ has girth at least~$5$ and~$S\cup T$ is an independent set.
As it turns out, the graph~$G'$ actually can have $4$-cycles, but these
are relatively easy to deal with (we can eliminate separating $4$-cycles \emph{via} a precoloring
extension argument, and $4$-faces can be reduced in a standard way by collapsing).
Nevertheless, while the set~$S$ is independent since~$G$ is triangle-free, the vertices in~$T$ can be adjacent
to other vertices in~$S\cup T$.

Suppose for a moment that the outer face of~$G'$ is bounded by an induced cycle~$C$.
By ignoring a constant fraction of the requests, we can assume that the distance in~$C$
between any two distinct vertices in $T$ is at least three.  Consequently, $G'[S\cup T]$ does not
contain a path on four vertices; it still can, however, contain $3$-vertex paths with endvertices
in~$S$ and the middle vertex in~$T$.  It would be convenient to have a variation of Thomassen's result
that allows such $3$-vertex paths with lists of size~$2$; but no such variation is known or even likely to hold.
Even a quite involved result of Dvořák and Kawarabayashi~\cite{dk} for $3$-list-coloring only allows $2$-vertex
paths with lists of size two (and even that only subject to the additional restriction that the distance between such
paths is at least three).  Overcoming these issues requires a combination of several partial coloring arguments together with
elimination of a part of interfering constraints in~$T$ using a result of Naserasr~\cite{homclebsch} on
odd distance coloring of planar graphs.

The paper is structured as follows. In Section~\ref{sec:ground}, we perform some ground work on~Problem~\ref{prob:adpt13} where
we show that it actually suffices to restrict the attention to request graphs with only non-equality (or only equality)
requests, and to unit weights; that is, it is sufficient to consider request graphs of the
form~$(G,R_=,\varnothing,\text{unit})$ or, equivalently, of the form~$(G,\varnothing,R_{\neq},\text{unit})$,
where~$\text{unit}\colon R_=\cup R_{\neq}\to \QQ^+$ is the function constantly equal to~$1$.
Section~\ref{sec:eq-exp} is devoted to proving our main result, Theorem~\ref{thm-eq-pr-exp}.
After introducing and strengthening some auxiliary results on list colorings in Section~\ref{sec-aux},
we prove Corollary~\ref{cor-single} in Section~\ref{sec:atvert}.


\section{Ground work on~Problem~\ref{prob:adpt13}}\label{sec:ground} 

We start by proving the following equivalences.
\begin{theorem}\label{thm-eq-eq}
Let~$\alpha$ be a positive real number.  The following assertions are equivalent.
\begin{itemize}
  \item[\textrm{(RGEN)}] Every planar triangle-free request graph has
        a $3$-coloring that satisfies $\alpha$-fraction of its requests.
  \item[\textrm{(RE)}] Every planar triangle-free request
        graph~$(G,R_=,\varnothing,w)$ has a $3$-coloring that satisfies
        $\alpha$-fraction of its requests.
  \item[\textrm{(REU)}] Every planar triangle-free request
        graph~$(G,R_=,\varnothing,\text{unit})$ admits a $3$-coloring that satisfies
        $\alpha$-fraction of its requests.
\end{itemize}
\end{theorem}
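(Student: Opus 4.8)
The plan is to establish the cycle of implications $\textrm{(RGEN)}\Rightarrow\textrm{(RE)}\Rightarrow\textrm{(REU)}\Rightarrow\textrm{(RGEN)}$, with the same constant $\alpha$ throughout. The first implication is immediate, since a request graph of the form $(G,R_=,\varnothing,w)$ is a special case of an arbitrary planar triangle-free request graph, so $\textrm{(RGEN)}$ applies verbatim. The second implication, $\textrm{(RE)}\Rightarrow\textrm{(REU)}$, is also essentially trivial: a unit-weight instance is a special case of a weighted one, so it suffices to observe that the definition of ``satisfying $\alpha$-fraction'' with all weights equal to $1$ is exactly the statement that a set of requests of size at least $\alpha|R_=|$ is satisfied. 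Hence the real content lies in $\textrm{(REU)}\Rightarrow\textrm{(RGEN)}$, which must recover the general weighted two-sided statement from the unit-weight equality-only one.

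For $\textrm{(REU)}\Rightarrow\textrm{(RGEN)}$ I would proceed in two reductions. \emph{Eliminating non-equality requests.} Given a request vertex $r\in R_{\neq}$ of degree two with neighbors $x,y$, I want to simulate the constraint ``$\varphi(x)\neq\varphi(y)$'' using equality requests. The idea is a local gadget: replace $r$ by a small triangle-free planar gadget attached at $x$ and $y$ that, together with one or more equality requests, is satisfiable exactly when $x$ and $y$ receive different colors, and whose equality requests are all simultaneously satisfiable when they do. A natural candidate is to insert a path or a short even cycle through $x$ and $y$ carrying equality requests on it; since in a proper $3$-coloring two vertices at the ends of a path of suitable parity are forced to agree or disagree, one can arrange that satisfying the planted equality requests forces $\varphi(x)\neq\varphi(y)$, and conversely any $3$-coloring with $\varphi(x)\neq\varphi(y)$ extends to satisfy them. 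One must check that the gadget keeps the graph planar and triangle-free and that $R_=\cup R_{\neq}$ remains an independent set of degree-two vertices; the faces of $G$ incident with $r$ give room to embed the gadget. \emph{Eliminating weights.} Given rational weights $w$, clear denominators so that all $w(r)$ are positive integers bounded by some $N$, and replace each request vertex $r$ of weight $k$ by $k$ parallel copies of the same request (each a fresh degree-two vertex with the same two neighbors), planted in the face next to $r$; a $3$-coloring satisfies all $k$ copies or none, so the weighted fraction satisfied in the original instance equals the unit-weight fraction satisfied in the blown-up instance. Again one checks planarity and the independence/degree-two conditions are preserved (the copies can be nested inside a common face).

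The main obstacle I anticipate is the non-equality-to-equality gadget: it must be genuinely \emph{local} (so that different requests do not interfere and the independence condition survives), it must be \emph{triangle-free} (ruling out the obvious two-edge gadget $x\,r'\,y$ with an equality request at $r'$, which would only force $\varphi(x)=\varphi(y)$ anyway and creates a triangle if $x,y$ are adjacent---though here $x,y$ are nonadjacent since $r\in R_{\neq}$ and $R_{\neq}$ is independent, so that particular issue is mild), and it must faithfully encode ``$\neq$'' rather than ``$=$''. The cleanest route is probably to use the fact that in a proper $3$-coloring, along a path $x=v_0,v_1,\dots,v_{2t}=y$ the colors of $v_0$ and $v_{2t}$ need not be equal in general---so instead one should plant equality requests on the \emph{internal} vertices of two parallel paths from $x$ to $y$ forming an even cycle $C$ through $x$ and $y$, using that a $3$-colored even cycle with all ``equality'' requests on one side satisfied forces the antipodal structure one needs; working out the exact small gadget and verifying it has no triangles and adds only degree-two requests forming an independent set is the delicate bookkeeping. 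Once the gadget is fixed, applying $\textrm{(REU)}$ to the transformed instance and pulling the coloring back yields a $3$-coloring of the original $G$ satisfying $\alpha$-fraction of the original (weighted) requests, completing the cycle.
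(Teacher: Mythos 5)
Your handling of the trivial implications and of the weight elimination (clearing denominators and replacing a weight-$k$ request by $k$ parallel degree-two clones nested in a face) matches the paper's argument for $\textrm{(REU)}\Rightarrow\textrm{(RE)}$ and is fine. The genuine gap is in the reduction from non-equality to equality requests: you correctly identify this as the crux, but you never actually produce the gadget --- you only hypothesize that ``a path or a short even cycle through $x$ and $y$ carrying equality requests'' with some ``antipodal structure'' should work, and you explicitly defer the verification. As stated, this is not a proof, and the direction you lean toward (two parallel paths forming an even cycle, equality requests on internal vertices) is both more complicated than necessary and not clearly sound: equality requests along a path do not chain to constrain the endpoints in any useful way, since each request only ties together its own two neighbors.

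The missing idea is to use an actual \emph{edge} of the graph to convert equality into inequality. The paper replaces the request vertex $r\in R_{\neq}$ with neighbors $x$ and $y$ by a path $x\,s\,u\,y$, where $u$ is a new ordinary vertex and $s$ is a new equality request with neighbors $x$ and $u$. If the equality request $s$ is satisfied, then $\varphi(x)=\varphi(u)$, and since $uy$ is an edge of the new graph, $\varphi(u)\neq\varphi(y)$, hence $\varphi(x)\neq\varphi(y)$ and the original request is satisfied; conversely any $3$-coloring of $G$ extends to the gadget. The gadget is drawn in a face incident with $r$, creates no cycle shorter than a $4$-cycle (even if $xy\in E(G)$), gives $s$ degree two with both neighbors outside the request set, and preserves the weights; so applying the equality-only statement to the transformed graph and restricting back yields the general case. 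Once you supply this (or an equivalent) explicit gadget with these checks, your outline closes into the paper's proof; without it, the implication $\textrm{(REU)}\Rightarrow\textrm{(RGEN)}$ (equivalently $\textrm{(RE)}\Rightarrow\textrm{(RGEN)}$) is not established.
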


\begin{proof}
The implications $\text{(RGEN)}\Rightarrow\text{(RE)}\Rightarrow\text{(REU)}$ are trivial.

Suppose that~(REU) holds, and let~$(G,R_=,\varnothing,w)$ be a planar triangle-free
request graph.  Without loss of generality, we can multiply all the values of~$w$
by some integer, so that the values of~$w$ become integral.  Let~$G'$ be the graph
obtained from~$G$ by replacing each vertex~$r\in R_=$ by $w(r)$ clones, and
let~$R'_=$ be the set of all such clones.  By (REU) applied
to~$(G',R'_=,\varnothing,\text{unit})$, there exists a $3$-coloring of~$G'$
satisfying $\alpha$-fraction of its requests, and its restriction to~$G$ satisfies
$\alpha$-fraction of requests of~$(G,R_=,\varnothing,w)$.  Hence, (REU)
implies~(RE).

Suppose that~(RE) holds, and let~$(G,R_=,R_{\neq},w)$ be a request graph.
Let~$G'$ be the graph obtained from~$G$ by replacing each vertex of~$R_{\neq}$ as
depicted in Figure~\ref{fig-gadgets}(a). Let~$R'_=$ be the set of created vertices
that are depicted in the figure by a square containing~``$=$''.  Let~$w'$ be the
function matching $w$ on $R_=$ and giving each vertex of~$R'_=$ the weight of the
vertex of~$R_{\neq}$ it replaces.  Then $(G',R_=\cup R'_=,\varnothing,w')$ is
a planar triangle-free request graph, and any $3$-coloring of~$G'$ corresponds to
a $3$-coloring of~$G$ satisfying the same fraction of the requests.  Hence, (RE)
implies~(RGEN).
\end{proof}

\begin{figure}[!t]
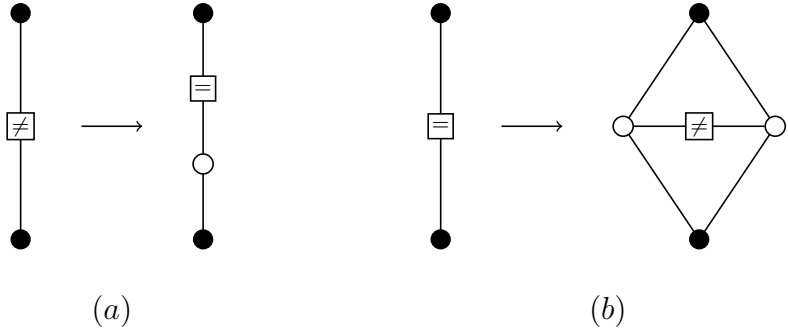

      \begin{center}
      \begin{tikzgraph}
            \draw[edge] (0,2) node[p]{} -- (0,.5) node[s]{$\neq$} -- (0,-1) node[p]{};
            \draw[edge,->] (.8,.5)--(1.6,.5);
            \draw[edge] (2.4,2) node[p]{} -- (2.4,1) node[s]{$=$} -- (2.4,0) node[n]{} -- (2.4,-1) node[p]{};
\draw (1.2,-1.2) node[below=\R/2]{{\normalsize $(a)$}};
\end{tikzgraph}
\hspace{25mm}
      \begin{tikzgraph}
            \draw[edge] (0,2) node[p]{} -- (0,.5) node[s]{$=$} -- (0,-1) node[p]{};
            \draw[edge,->] (.8,.5)--(1.6,.5);
            \draw[edge] (3.4,2) node[p]{} -- (2.4,.5) node[n](l){} --  (3.4,-1) node[p]{} -- (4.4,.5) node[n](r){} -- cycle;
            \draw[edge] (l)--(3.4,.5) node[s]{$\neq$} -- (r);
\draw (2.2,-1.2) node[below=\R/2]{{\normalsize$(b)$}};
\end{tikzgraph}
\end{center}
\caption{Gadgets showing equivalence of equality and inequality requests.}\label{fig-gadgets}
\end{figure}

Analogously (using the replacement from Figure~\ref{fig-gadgets}(b)) we obtain the following.
\begin{theorem}\label{thm-eq-neq}
Let~$\alpha$ be a positive real number.  The following assertions are equivalent.
\begin{itemize}
\item[\textrm{(RGEN)}] Every planar triangle-free request graph admits
      a $3$-coloring that satisfies $\alpha$-fraction of its requests.
\item[\textrm{(RN)}] Every planar triangle-free request
      graph~$(G,\varnothing,R_{\neq},w)$ has a $3$-coloring that satisfies
      $\alpha$-fraction of its requests.
\item[\textrm{(RNU)}] Every planar triangle-free request
      graph~$(G,\varnothing,R_{\neq},\text{unit})$ admits a $3$-coloring that
      satisfies $\alpha$-fraction of its requests.
\end{itemize}
\end{theorem}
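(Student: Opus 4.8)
The plan is to follow the template of the proof of Theorem~\ref{thm-eq-eq} with equality and inequality requests interchanged, using the replacement of Figure~\ref{fig-gadgets}(b) where that proof used Figure~\ref{fig-gadgets}(a). The implications $\text{(RGEN)}\Rightarrow\text{(RN)}\Rightarrow\text{(RNU)}$ are immediate, since (RN) is the case $R_==\varnothing$ of (RGEN) and (RNU) is the unit-weight case of (RN). It therefore remains to prove $\text{(RNU)}\Rightarrow\text{(RN)}$ and $\text{(RN)}\Rightarrow\text{(RGEN)}$.

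For $\text{(RNU)}\Rightarrow\text{(RN)}$, I would mimic the argument that $\text{(REU)}\Rightarrow\text{(RE)}$: given a planar triangle-free request graph $(G,\varnothing,R_{\neq},w)$, scale $w$ to make it integral and form $G'$ from $G$ by replacing each $r\in R_{\neq}$ with $w(r)$ parallel copies, each a new degree-two vertex joined to the two neighbors of $r$; let $R'_{\neq}$ consist of all these copies. As the two neighbors of a request are non-adjacent ($G$ being triangle-free), $G'$ is still planar and triangle-free and $R'_{\neq}$ is an independent set of degree-two vertices, so (RNU) applies to $(G',\varnothing,R'_{\neq},\text{unit})$ and yields a $3$-coloring whose restriction to $V(G)\setminus R_{\neq}$ extends to a $3$-coloring of $G$ (each request having degree two). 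A request $r$ of $G$ is satisfied in that coloring precisely when all $w(r)$ of its copies are, so the satisfied weight in $G$ equals the number of satisfied copies in $G'$, namely at least $\alpha\abs{R'_{\neq}}=\alpha w(R_{\neq})$.

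For $\text{(RN)}\Rightarrow\text{(RGEN)}$, take an arbitrary request graph $(G,R_=,R_{\neq},w)$ and replace each $\rho\in R_=$, with neighbors $a_\rho$ and $b_\rho$, by the gadget of Figure~\ref{fig-gadgets}(b): delete $\rho$, add ordinary vertices $l_\rho$, $r_\rho$ and a new inequality request $m_\rho$, and add the edges forming the $4$-cycle $a_\rho l_\rho b_\rho r_\rho$ together with $l_\rho m_\rho$ and $m_\rho r_\rho$. Since $a_\rho\not\sim b_\rho$, this introduces only $4$-cycles, so the resulting graph $G'$ is planar and triangle-free and $R_{\neq}\cup\{m_\rho:\rho\in R_=\}$ is a legitimate inequality-request set; let $w'$ agree with $w$ on $R_{\neq}$ and set $w'(m_\rho)=w(\rho)$. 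Applying (RN) to this request graph produces a $3$-coloring $\varphi'$, whose restriction to $V(G)\setminus R_=$ properly colors $G-R_=$; I would extend it to a $3$-coloring $\varphi$ of $G$ by giving each $\rho$ a color distinct from $\varphi'(a_\rho)$ whenever $\varphi'(a_\rho)=\varphi'(b_\rho)$, and any admissible color otherwise. The key point is the one-directional claim that if $m_\rho$ is satisfied, that is, $\varphi'(l_\rho)\neq\varphi'(r_\rho)$, then $\varphi'(a_\rho)=\varphi'(b_\rho)$ — which is nothing but the fact that in a proper $3$-coloring of a $4$-cycle, distinctness of one pair of opposite vertices forces the other pair to be monochromatic. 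Hence every satisfied $m_\rho$ yields a satisfied $\rho$, every satisfied request of $R_{\neq}$ keeps its status, and the satisfied weight for $G$ under $\varphi$ is at least that for $G'$ under $\varphi'$, which is at least $\alpha w'(R_{\neq}\cup\{m_\rho:\rho\in R_=\})=\alpha w(R_=\cup R_{\neq})$.

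I do not expect a genuine obstacle here: the whole proof is a transcription of Theorem~\ref{thm-eq-eq} with the two request types swapped. The single point that deserves care — and which I would flag as the main step — is checking that the Figure~\ref{fig-gadgets}(b) gadget faithfully simulates an equality request, which reduces to the $4$-cycle colouring observation above; as in the equality-to-inequality direction, only one implication of the simulation is needed, because degree-two vertices can always be recolored to absorb the other direction.
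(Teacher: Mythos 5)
Your proposal is correct and follows exactly the route the paper intends: the paper proves Theorem~\ref{thm-eq-neq} by declaring it ``analogous'' to Theorem~\ref{thm-eq-eq} via the gadget of Figure~\ref{fig-gadgets}(b), and your write-up is precisely that transposition, including the one nontrivial verification (that in a proper $3$-coloring of the $4$-cycle $a_\rho l_\rho b_\rho r_\rho$, the condition $\varphi'(l_\rho)\neq\varphi'(r_\rho)$ forces $\varphi'(a_\rho)=\varphi'(b_\rho)$, and that only this direction of the simulation is needed). No gaps.
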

Let us note that (RNU) is just a reformulation of the statement from the abstract, discussed as (TRIA)
earlier.

\section{Satisfying requests is equivalent to having exponentially many \texorpdfstring{$3$}{3}-colorings}\label{sec:eq-exp}
Theorem~\ref{thm-eq-eq} implies that we can establish Theorem~\ref{thm-eq-pr-exp} by proving the following statement.
\begin{theorem}\label{thm-eq-exp}
The following assertions are equivalent.
\begin{itemize}
\item[\textrm{(REU)}] There exists a positive real number~$\alpha$ such that every
      planar triangle-free request graph~$(G,R_=,\varnothing,\text{unit})$ has
      a $3$-coloring satisfying $\alpha$-fraction of its requests.
\item[\textrm{(EXP)}] There exists a positive real number~$\beta$ such that every
      planar triangle-free graph~$G$ has at least~$2^{\beta\abs{V(G)}}$
      $3$-colorings.
\end{itemize}
\end{theorem}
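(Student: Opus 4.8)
The plan is to prove the two implications separately. For the direction $\text{(REU)}\Rightarrow\text{(EXP)}$, I would argue as follows. Let $G$ be a planar triangle-free graph; the goal is to build exponentially many $3$-colorings out of the guarantee provided by (REU). The idea is to attach a collection of ``gadgets'' to $G$ that encode independent binary choices. Specifically, take many disjoint copies of a small fixed triangle-free planar gadget, each containing a vertex of degree two that will serve as a request vertex, and glue these gadgets onto $G$ (or rather onto a planar embedding of $G$) in such a way that the resulting graph $H$ is still planar and triangle-free, and so that each gadget can be $3$-colored in at least two essentially different ways depending on whether its request is satisfied or not, independently of the colors elsewhere. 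Actually the cleanest route is to iterate: repeatedly apply (REU) to larger and larger request graphs, each time extracting a linear-in-$n$ number of satisfied-or-unsatisfied binary decisions that are genuinely free, and count that the number of resulting colorings is at least $2^{\Omega(|V(H)|)}=2^{\Omega(|V(G)|)}$. The main point to check is that a single request vertex of degree two, sitting on an independent set, really does contribute a multiplicative factor bounded away from $1$ to the count of colorings once one conditions on the behavior of all previously handled requests.

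For the converse $\text{(EXP)}\Rightarrow\text{(REU)}$, I would proceed by contradiction-free counting. Suppose (EXP) holds with constant $\beta$, and let $(G,R_=,\varnothing,\text{unit})$ be a planar triangle-free request graph with $|R_=|=k$. For each request vertex $r\in R_=$ with neighbors $x_r,y_r$, the event ``$r$ is satisfied'' is the event $\varphi(x_r)=\varphi(y_r)$. The key construction is to form, for each request $r$, an auxiliary triangle-free planar graph by identifying $x_r$ and $y_r$ (after first checking this keeps the graph planar and triangle-free — here we use that $r$ has degree two and that $R_=$ is independent, so the two neighbors are non-adjacent and we can route the identification through the face that contained $r$). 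A $3$-coloring of $G$ satisfies $r$ precisely when it descends to a $3$-coloring of this contracted graph. More globally, for any subset $S\subseteq R_=$, the colorings of $G$ that satisfy all requests in $S$ are in bijection with the colorings of the graph $G_S$ obtained by performing all these identifications simultaneously; since $G_S$ is again planar and triangle-free (one must verify that simultaneous identifications across an independent set of degree-two vertices preserve these properties, which is where planarity of the embedding is used), Grötzsch's theorem gives that $G_S$ has at least one $3$-coloring, and (EXP) gives it has at least $2^{\beta|V(G_S)|}$ of them.

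The counting then runs as follows. Pick a uniformly random $3$-coloring $\varphi$ of $G$; by (EXP), $G$ has $N\ge 2^{\beta n}$ colorings where $n=|V(G)|$. Let $X$ be the (weighted, but here unit) number of satisfied requests. If the expected fraction $\mathbb{E}[X]/k$ were smaller than any fixed constant, I would derive a contradiction by a union-bound / inclusion-exclusion estimate: the number of colorings satisfying no request of a cleverly chosen large independent-in-an-auxiliary-sense subfamily is too small compared with $N$, because each identification can only decrease the vertex count by one and hence can only decrease the coloring count by a bounded multiplicative factor, whereas forbidding satisfaction of a request removes at most a $\frac{2}{3}$ (really a bounded) fraction of colorings — so if $\alpha$ were too small, removing the colorings that satisfy ``too many'' requests would leave a set that is simultaneously $\ge\frac12 N$ and $\le(\text{something})^k$, forcing $k=O(n)$ and thus pinning down a constant fraction. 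Concretely, I expect the clean argument to go: if $\varphi$ satisfies fewer than $\alpha k$ requests, it lies in $\bigcup_{|S|\ge(1-\alpha)k}\{\varphi:\varphi\text{ satisfies no }r\in S\}$, and the colorings avoiding a given request $r$ number at most $N - (\text{colorings of }G_{\{r\}}) \le N - 2^{\beta(n-1)}$; iterating over a maximal ``spread-out'' subfamily of requests (chosen so that the successive contracted graphs remain triangle-free planar and lose one vertex each) multiplies these deficits and yields a bound incompatible with (EXP) unless $\alpha$ can be taken to be an absolute positive constant.

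The main obstacle I anticipate is the bookkeeping around \emph{simultaneous} identifications: a single contraction of $x_r\sim y_r$ is harmless, but doing many at once across $R_=$ can create triangles or destroy planarity if two requests share a neighbor or if their neighbors become adjacent after other identifications. Controlling this is exactly why the theorem is stated for request \emph{vertices of degree two forming an independent set} and why one works with a fixed planar embedding; I would handle it by an exchange/cleaning argument that first discards a bounded fraction of ``bad'' requests (those whose contraction interacts with another), reducing to a sub-family whose contractions are pairwise non-interfering, at the cost of only a constant factor in $\alpha$. Equivalently — and this is probably the slicker packaging — rather than contracting, one can \emph{add} to $G$, for each request $r$ in a safe sub-family, a path of length two or four joining $x_r$ and $y_r$ through a fresh region of the face (a gadget forcing $\varphi(x_r)=\varphi(y_r)$ in every $3$-coloring), keeping the graph planar and triangle-free and with only $O(1)$ extra vertices per request, so that (EXP) applied to the augmented graph directly lower-bounds the number of colorings of $G$ that satisfy all the chosen requests; this sidesteps the need to verify that contractions preserve the hypotheses and is the form in which I would actually write the proof.
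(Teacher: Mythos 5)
Your proposal has essential gaps in both directions, and you have the relative difficulty of the two implications inverted: in the paper the easy direction is $\text{(EXP)}\Rightarrow\text{(REU)}$ and the hard one is $\text{(REU)}\Rightarrow\text{(EXP)}$.

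For $\text{(REU)}\Rightarrow\text{(EXP)}$: attaching external gadgets to~$G$ and counting colorings of the augmented graph~$H$ does not bound the number of $3$-colorings of~$G$, which is what (EXP) demands; the ``two essentially different ways'' of coloring each gadget live inside the gadgets and contribute nothing to the count for~$G$ itself. What is actually needed is to locate, inside a putative minimal counterexample~$G$, linearly many pairs of vertices $\{x,y\}$ of~$G$ such that any $3$-coloring giving $x$ and~$y$ the same color provably multiplies the number of $3$-colorings of~$G$; one then attaches a request vertex to each such pair and invokes (REU). Producing these pairs is the entire content of the proof: the paper first shows (Lemma~\ref{lemma-manycolor}) that a coloring with many bichromatic $4$-faces yields exponentially many colorings by swapping two colors on components of two-colored subgraphs, and then uses a maximal $5$-cycle decomposition, the dichotomy between rich vertices and upwardly mobile $11$-suburbs (each of which already doubles the count) versus the remaining suburbs, and Lemma~\ref{lemma-rearr} to find in each remaining suburb a pair on a $4$-face whose equal coloring makes the coloring rearrangeable to a bichromatic $4$-face. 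Your sketch contains no substitute for any of this; ``repeatedly apply (REU), extracting binary decisions that are genuinely free'' is not an argument, because nothing guarantees those decisions correspond to distinct colorings of~$G$.

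For $\text{(EXP)}\Rightarrow\text{(REU)}$: your ``slicker packaging'' relies on a triangle-free planar gadget forcing $\varphi(x_r)=\varphi(y_r)$ in every $3$-coloring; no such gadget exists --- a path of length two or four forces nothing, and the Jensen--Thomassen full-rank theorem cited in the introduction says precisely that no color identification is forced in a triangle-free planar graph. The contraction route is also blocked: identifying $x_r$ with~$y_r$ creates a triangle whenever they are joined by a path of length three avoiding~$r$, which can happen for \emph{every} request simultaneously, so no cleaning step retains a constant fraction of non-interfering requests; and your union bound over all~$S$ with $\abs{S}\ge(1-\alpha)k$ has exponentially many terms with no established multiplicative deficit estimate. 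The correct (and much simpler) argument is the cloning trick: replace each request vertex by $N$ degree-two clones with the same two neighbors; a $3$-coloring of~$G-R_=$ satisfying~$s$ requests extends to exactly $2^{sN}$ colorings of the cloned graph, so comparing the upper bound $3^{n}2^{s_0N}$ with the lower bound $2^{\beta(n+N\abs{R_=})}$ given by (EXP) applied to the cloned graph, and taking~$N$ large, forces $s_0\ge\alpha\abs{R_=}$.
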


Showing~$\text{(EXP)}\Rightarrow\text{(REU)}$ is quite easy---we replace each request
by a large number of vertices of degree two with the same neighbors, and observe that
these vertices of degree two can only be colored in many ways if the neighbors are assigned
the same color, i.e., the request is satisfied. Thus, if the graph after the replacement
has exponentially many $3$-colorings, then a constant fraction of the requests must be satisfied.
The other implication~$\text{(REU)}\Rightarrow\text{(EXP)}$ is more involved and it
uses a number of auxiliary statements devised in order to prove the sub-exponential bounds
of Thomassen~\cite[Theorem~5.8]{thom-many} and Asadi~\emph{et al.}~\cite[Theorem~1.3]{submany}.
Essentially, the idea is to be able to place requests such that a $3$-coloring
satisfying a linear proportion of them will ensure properties that produce
many different $3$-colorings of the original graph. Mainly, we want the $3$-coloring
to produce $4$-faces the vertices of which avoid one of the three colors. We shall
thus pinpoint forced configurations of a minimal counter-example that allow us
to put requests which, if satisfied, produce such faces. We also need to prove
that there will be many such configurations, which is done using a decomposition
of the graph based on its separating $5$-cycles, as in the previous works on the topic.

We start by explaining why having many $4$-faces as mentioned above helps us,
through the following strengthening of a result of Thomassen~\cite{thom-many}.  For
a $3$-coloring of a plane graph, a face~$f$ is \emph{bichromatic} if the set of colors
assigned to the vertices incident to~$f$ has size two.

\begin{lemma}\label{lemma-manycolor}
Let~$G$ be a connected plane triangle-free graph with~$n\ge 3$ vertices, and for~$i\ge 4$, let~$s_i$ be the number of faces
of~$G$ of length exactly~$i$.  Let $\varphi$ be a $3$-coloring of~$G$, and let~$q$ be the number of bichromatic $4$-faces of~$G$.
Then $G$ has at least~$2^{(s^++8+q)/6}$ distinct $3$-colorings, where $s^+=s_5+2s_6+\ldots=\sum_{i\ge 5} (i-4)s_i$.
\end{lemma}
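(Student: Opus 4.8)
The plan is to induct on the number of vertices $n$, and to use Euler's formula together with a discharging/counting argument to extract many independent "binary choices" that each double the number of $3$-colorings. First I would dispose of small cases and reduce to the case where $G$ is $2$-connected: if $G$ has a cut vertex or is small, the colorings of the blocks multiply and $s^+$, $q$ add up across blocks, so the bound follows from the inductive statement applied to the blocks (the additive constant $8$ is what makes this kind of bookkeeping work, and one checks the base case $n=3$ directly, where $G$ is a path or has a single $\ge 4$-face). So assume $G$ is $2$-connected and plane, with every face a cycle of length $\ge 4$.

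The heart of the argument is a local reduction. Euler's formula for a $2$-connected triangle-free plane graph gives $\sum_{f}(\ell(f)-4) + \sum_{v}(\deg(v)-4) = -8$, i.e. $s^+ = \sum_{i\ge5}(i-4)s_i$ is controlled by the excess degree at vertices; in particular, unless $G$ is (close to) $4$-regular with all faces quadrilaterals, there is structure to exploit. I would look for a vertex $u$ of degree $2$: contracting the two edges at $u$ (identifying its neighbors, which are non-adjacent since $G$ is triangle-free) or deleting $u$ yields a smaller triangle-free plane graph $G'$; a careful bookkeeping shows $s^+(G') \ge s^+(G) - c$ and that each $3$-coloring of $G'$ lifts to enough $3$-colorings of $G$ (a degree-$2$ vertex always has at least one free color, and when its two neighbors agree it has two, which is exactly a bichromatic-type doubling) to make the arithmetic $2^{(s^+ +8+q)/6}$ go through with the loss of $6$ in the exponent per removed unit. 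If $G$ has no vertex of degree $2$, then minimum degree is $\ge 3$, and I would instead reduce a $4$-face: the standard "collapsing" of a $4$-face $v_1v_2v_3v_4$ by identifying the non-adjacent pair $v_1,v_3$ (legal since $G$ is triangle-free, so $v_1,v_3$ have no common neighbor off the face) produces a smaller triangle-free plane graph whose $3$-colorings are in $2$-to-$1$-ish correspondence with those $3$-colorings of $G$ giving $v_1,v_3$ the same color — and bichromatic $4$-faces are precisely the ones where this happens, so the $+q$ term is what lets us recover these. Iterating, either we reduce $s^+$ or $q$ (gaining a factor $2$ in colorings each time we spend $6$ in the exponent, hence the $/6$), or we reach a graph with no degree-$2$ vertex and no $4$-face at all, i.e. girth $\ge 5$; for such graphs the bound $2^{(s^++8+q)/6}$ should follow from Thomassen's original exponential lower bound for $3$-list-colorings of planar graphs of girth $\ge 5$ (which is stronger than what we need here), possibly after again splitting along cut vertices.

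The main obstacle I expect is making the accounting \emph{simultaneously} track $s^+$, $q$, and the $3$-coloring $\varphi$ through the reductions: each reduction step changes the face structure in a way that can create or destroy $4$-faces and shift lengths of longer faces, so one must show $s^+ + q$ (suitably) does not drop by more than the number of colorings-doublings one gains, uniformly. In particular the role of $\varphi$ is subtle — $\varphi$ is not used to produce colorings directly but to \emph{locate} the bichromatic $4$-faces that are guaranteed to exist, and one needs $\varphi$ to descend to a coloring of each reduced graph so the induction hypothesis can see the same $q$ (or a controlled fraction of it); the contraction/collapse operations must be chosen to be compatible with $\varphi$, e.g. only collapsing a $4$-face across a pair of vertices that $\varphi$ already colors identically when that $4$-face is one we want to count. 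Handling the degenerate near-$4$-regular quadrangulation case (where $s^+$ is small and no degree-$2$ vertex exists) cleanly, so that the constant $8$ absorbs the boundary effects, is the other delicate point. Beyond that, the estimates are routine Euler-formula manipulations and the $2$-to-$1$ coloring correspondences for degree-$2$ vertices and collapsed $4$-faces, which I would not grind through here.
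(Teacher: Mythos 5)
Your plan does not go through, and the gaps are not just bookkeeping. First, the claimed engine of the induction --- ``gaining a factor $2$ in colorings each time we spend $6$ in the exponent'' --- is never justified and in general is false: deleting a degree-$2$ vertex $u$ doubles the number of extensions of a coloring of $G-u$ only when the two neighbors of $u$ happen to receive the same color, and nothing in your reduction forces this; likewise, collapsing a $4$-face $v_1v_2v_3v_4$ onto the pair $v_1,v_3$ only sees those colorings of $G$ in which $v_1,v_3$ agree, so the correspondence with colorings of the reduced graph \emph{loses} colorings rather than gaining a factor. Second, your terminal case fails quantitatively. When the reductions bottom out at a graph of girth at least $5$, Euler's formula gives $s^+ = 2n-2s-4 \ge \tfrac{2}{3}n - O(1)$, so the lemma demands roughly $2^{n/9}$ colorings; Thomassen's $3$-list-coloring theorem for girth $5$ yields an exponential bound with a vastly smaller constant (on the order of $2^{n/10000}$), so it cannot serve as the base case. (The statement you would actually need there is Thomassen's \emph{counting} theorem from the same paper as this conjecture, which is precisely the $q=0$ case of the present lemma --- so invoking it would be circular in spirit.) Finally, you correctly sense that the role of $\varphi$ and the preservation of $q$ under reductions is delicate, but you do not resolve it, and it is exactly where a reduction-based proof would break: a bichromatic $4$-face of $\varphi$ need not survive as any countable structure after contraction.

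For comparison, the paper's proof is a short global argument with no induction at all. Fix $\varphi$ and, for each pair of colors $a<b$, look at $G[V_{ab}]$, the subgraph induced by the vertices colored $a$ or $b$. A spanning-forest count gives $e_{ab}\ge n_{ab}-c_{ab}$, where $c_{ab}$ is the number of components; each $4$-face all of whose vertices lie in $V_{ab}$ contributes (via a minimal hitting set of edges, at most two such faces per edge) an extra $\abs{Q_{ab}}/2$ edges beyond a spanning forest without changing the components, so $e_{ab}\ge n_{ab}-c_{ab}+\abs{Q_{ab}}/2$. Summing over the three pairs and substituting $e=2n-4-s^+/2$ from Euler's formula yields $c_{12}+c_{23}+c_{13}\ge s^+/2+4+q/2$, hence some $c_{ab}\ge (s^++8+q)/6$; swapping the two colors independently on each component of that $G[V_{ab}]$ produces $2^{c_{ab}}$ distinct colorings. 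The factor-$2$ choices you were hunting for are these Kempe-component swaps, and the $+q$ term enters through the edge count, not through any face-collapsing operation.
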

\begin{proof}
Let~$e$ be the number of edges of~$G$ and~$s$ the number of faces of~$G$.  By Euler's formula, $e+2=n+s$.
Furthermore, $2e=4s+s^+$, and thus $e=2n-4-s^+/2$.

For~$a,b\in\{1,2,3\}$ with~$a<b$, we define~$V_{ab}$ to be the set of vertices
of~$G$ colored by~$a$ or by~$b$, and we let $Q_{ab}$ be the set of~$4$-faces
of~$G$ with all incident vertices in~$V_{ab}$.  Let~$X_{ab}$ be a minimal set of
edges such that each face of~$Q_{ab}$ is incident with an edge of~$X_{ab}$.  By
the minimality of~$X_{ab}$, for every $e\in X_{ab}$ there exists a bichromatic
$4$-face~$f$ such that $e$ is the only edge of~$X_{ab}$ incident with~$f$, and
thus $G[V_{ab}]-X_{ab}$ has the same components as $G[V_{ab}]$.  Furthermore, $e$
may only be incident with two $4$-faces of~$Q_{ab}$, and thus $\abs{X_{ab}}\ge
\abs{Q_{ab}}/2$.  Let~$c_{ab}$ be the number of components of~$G[V_{ab}]$, set
$n_{ab}=\abs{V_{ab}}$ and $e_{ab}=\abs{E(G[V_{ab}])}$. Then
$e_{ab}-\abs{X_{ab}}\ge n_{ab}-c_{ab}$, and thus $e_{ab}\ge
n_{ab}-c_{ab}+\abs{X_{ab}}\ge n_{ab}-c_{ab}+\abs{Q_{ab}}/2$.

Summing these inequalities over all pairs of colors, we obtain
\[2n-4-s^+/2 =e=e_{12}+e_{23}+e_{13}\ge 2n-(c_{12}+c_{23}+c_{13})+q/2,\]
and thus
\[c_{12}+c_{23}+c_{13}\ge s^+/2+4+q/2.\]
By symmetry, we can assume that $c_{12}\ge c_{23}\ge c_{13}$, and thus
\[c_{12}\ge (s^++8+q)/6.\]
We can independently interchange the colors~$1$ and~$2$ on each component
of~$G[V_{12}]$, thereby obtaining $2^{c_{12}}$ different colorings of~$G$.
The statement of the lemma follows.
\end{proof}

\noindent
We also use the following result from Thomassen's paper.
\begin{lemma}[{Thomassen~\cite[Theorem~5.1]{thom-many}}]\label{lemma-2cols}
Let~$G$ be a plane triangle-free graph with outer face bounded by a cycle~$C$ of
length at most~$5$, and let~$\psi$ be a $3$-coloring of~$C$.  If $G\neq C$
and~$\psi$ does not extend to at least two $3$-colorings of~$G$, then there exists
a vertex~$v\in V(G)\setminus V(C)$ adjacent to two vertices of~$C$ of distinct
colors.
\end{lemma}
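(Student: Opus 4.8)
This lemma is due to Thomassen, and the natural route is a minimal–counterexample argument (equivalently, induction on $|V(G)|$). The plan is to assume the conclusion fails, i.e. that \emph{no} vertex of $V(G)\setminus V(C)$ is adjacent to two vertices of $C$ of distinct colours, and to deduce that $G=C$ or that $\psi$ extends to at least two $3$-colourings of $G$. It is convenient to reformulate the goal through list colouring: set $H:=G-V(C)$ and give each $v\in V(H)$ the list $L(v):=\{1,2,3\}\setminus\psi\bigl(N(v)\cap V(C)\bigr)$. The failure hypothesis says exactly that $|L(v)|\ge 2$ for every $v\in V(H)$ (and $|L(v)|=3$ unless $v$ has a neighbour on $C$); proper $L$-colourings of $H$ are precisely the extensions of $\psi$ to $G$; so what we must prove is that $H$ is empty (whence $G=C$) or that $H$ has at least two $L$-colourings.

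First I would reduce to the case that $G$ is $2$-connected. If $G$ is disconnected, the component $G_0$ containing $C$ is handled by induction — its conclusion cannot be "a vertex seeing two colours'' without contradicting our hypothesis — while every other component, being a nonempty triangle-free planar graph, contributes a multiplicative factor of at least two to the number of $3$-colourings, so the total is at least two. If $G$ is connected with a cutvertex, I would strip off a leaf block $B$ of the block tree, keep its attaching vertex $u$, apply induction to the smaller graph $G-(V(B)\setminus\{u\})$ (which still has $C$ as its outer cycle), and use that any $3$-colouring of a triangle-free planar graph with one precoloured vertex extends (by a colour permutation) and that a block on at least two vertices with one precoloured vertex has at least two colourings, so that the blocks again multiply the count up to two. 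Once $G$ is $2$-connected, every face is bounded by a cycle, and since $G$ is triangle-free with $|C|\le 5$ the cycle $C$ is chordless and $|C|\in\{4,5\}$. To make the next step clean I would in fact prove, by induction, a slightly stronger statement that is stable under splitting along a separating cycle $D$ of length at most five — say, one that permits a bounded number of interior vertices to carry prescribed lists of size two — so that one can colour the outer part (containing $C$) in at least two ways, restrict to $D$, and continue into the inside of $D$; the quoted lemma is then the special case with no such extra vertices.

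The heart of the argument is the case where $G$ is $2$-connected, $C$ is an induced $4$- or $5$-cycle, and $G$ has no separating cycle of length at most five. Here one inspects the faces and vertices incident with $C$ and performs a local reduction: either identify two nonadjacent equally-coloured vertices near $C$ (for $|C|=4$ there is always a monochromatic pair of opposite vertices of $C$, and for $|C|=5$ a suitable monochromatic pair at distance two), or delete a vertex of $C$ of small degree together with a controlled set of edges, in such a way that the resulting smaller plane graph is still triangle-free with an outer cycle of length at most five; induction then supplies the two required extensions — unless the reduction is \emph{blocked}. A reduction is blocked only when the identification creates a triangle or a separating cycle of length at most five, and in either case one has exhibited a short cycle $D$ running through $C$; because $G$ is planar and triangle-free with $|C|\le 5$, such a $D$ pins down the interior of $C$ almost completely (typically $H$ collapses to one or two vertices of degree two, for which one checks directly that there are at least two $L$-colourings), or else $D$ forces a vertex of $V(G)\setminus V(C)$ adjacent to two differently-coloured vertices of $C$, contradicting the failure hypothesis. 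I expect the main obstacle to be precisely this bookkeeping of blocked reductions; it rests on the elementary observation that, since $C$ is triangle-free and $\psi$ is proper, a vertex outside $C$ is adjacent to at most two vertices of $C$ in any one colour class (two consecutive vertices of $C$ are excluded), so a configuration in which the interior interacts too densely with $C$ is simply impossible without some interior vertex meeting two colour classes of $C$ — which is exactly the conclusion we are after. Choosing the right strengthened induction hypothesis so that it survives all of these splittings is the principal design decision; after that, each case is a short verification.
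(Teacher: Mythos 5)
The paper does not prove this lemma at all: it is imported, with attribution, as Theorem~5.1 of Thomassen's paper \emph{Many $3$-colorings of triangle-free planar graphs}, so there is no in-paper argument to compare yours against. Judged on its own terms, your text is a proof plan rather than a proof, and the plan leaves open exactly the steps that carry the content. The list-colouring reformulation and the reduction to the $2$-connected case are fine and standard. But the ``heart of the argument'' paragraph establishes nothing concrete: you never specify which pair of vertices is identified, and the naive candidates fail --- identifying the two equally coloured vertices at distance two on a $5$-cycle $C$ creates a triangle, and identifying opposite vertices of a $4$-cycle creates a double edge --- so the identification must involve interior vertices selected by a case analysis of the faces and neighbourhoods at $C$ that you do not carry out. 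Likewise ``delete a vertex of $C$ of small degree together with a controlled set of edges, so that the result is still triangle-free with a short outer cycle'' is precisely the step that consumes most of the effort in any Gr\"otzsch-type argument, and here it is harder still because you must exhibit \emph{two} extensions rather than one: the extension form of Gr\"otzsch's theorem (Theorem~\ref{thm-grotzsch}) yields only one, and your sketch never identifies where the second colouring comes from in the irreducible configurations.

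The most serious gap is the one you yourself flag: the induction hypothesis is not stable under splitting along a separating $(\le 5)$-cycle $D$. If the outer part (between $C$ and $D$) admits only one extension of $\psi$, you recurse into the disk bounded by $D$ and may obtain, as the inductive conclusion, a vertex inside $D$ adjacent to two differently coloured vertices of $D$ --- which neither produces a second colouring of $G$ nor says anything about $C$, so the lemma's dichotomy is not recovered. Your proposed fix, a ``slightly stronger statement that permits a bounded number of interior vertices to carry prescribed lists of size two,'' is never formulated, and it is far from routine: allowing size-two lists off the outer face is exactly what results such as Theorem~\ref{thm-3choos} forbid, and finding a version that both survives the splittings and is verifiable in the base configurations is the actual substance of Thomassen's Theorem~5.1. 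As written, the proposal acknowledges this difficulty but does not resolve it, so it cannot be accepted as a proof.
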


We need the following observation, which implicitly appears in the paper of Asadi~\emph{et al.}~\cite{submany}.
\begin{lemma}\label{lemma-minc}
Let~$\beta$ be a positive real number and let~$n$ be an integer such that every planar
triangle-free graph~$H$ with less than~$n$ vertices has at least~$2^{\beta\abs{V(H)}}$
distinct $3$-colorings.  Let~$d_0=\lfloor 1/\beta\rfloor$.  Let~$G$ be a planar
triangle-free graph with~$n$ vertices.  If $G$ has less than~$2^{\beta n}$
distinct $3$-colorings, then every vertex of~$G$ of degree at most~$d_0$ is
contained in a $5$-cycle.
\end{lemma}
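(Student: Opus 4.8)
The plan is to argue by contradiction: suppose $G$ has a vertex $v$ of degree at most $d_0$ that is not contained in any $5$-cycle, and show that $G$ then has at least $2^{\beta n}$ distinct $3$-colorings, contradicting the hypothesis. The idea is to delete $v$, three-color the smaller graph $G-v$ in many ways using the inductive-type hypothesis, and then argue that each such coloring extends to many colorings of $G$ — in fact, that a constant fraction of the colorings of $G-v$ extend to more than one coloring of $G$, so that the number of colorings of $G$ is at least that of $G-v$.

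First I would apply the hypothesis to $H=G-v$, which is planar, triangle-free, and has $n-1<n$ vertices, so it has at least $2^{\beta(n-1)}$ distinct $3$-colorings. Now I need to bound how many of these colorings of $G-v$ extend to a $3$-coloring of $G$, and among those, how many extend in more than one way. Since $\deg(v)\le d_0$, the neighbors of $v$ use at most $d_0$ colors among the colorings of $G-v$; a coloring of $G-v$ fails to extend to $G$ only if all three colors appear on $N(v)$, and extends uniquely exactly when exactly two colors appear on $N(v)$, while it extends in at least two ways (three, actually) when all of $N(v)$ is monochromatic. The key structural input is that $v$ lies in no $5$-cycle: I would use this, together with triangle-freeness, to show that the subgraph of $G$ induced by $N(v)$ together with the short connections between them is very sparse — specifically, that there is no path of length $2$, $3$, or $4$ between two neighbors of $v$ avoiding $v$ (a path of length $2$ would give a $4$-cycle through $v$... that is allowed; a path of length $4$ would give a $5$-cycle through $v$, which is forbidden; a path of length $3$ gives a $4$-cycle). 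So the obstruction is really about $5$-cycles, and this is where Lemma~\ref{lemma-2cols} enters.

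The cleanest route is to set up a face or small-cycle argument around $v$: after possibly adding $v$ back with a triangle-free constraint, consider the local structure and use Lemma~\ref{lemma-2cols} to say that unless some coloring of a bounding cycle of length at most $5$ around $v$ extends in at least two ways, there is a vertex joining two differently-colored vertices of that cycle — but such a vertex, combined with $v$'s non-membership in a $5$-cycle, yields a contradiction. Concretely, I expect the argument to run: if too few colorings of $G-v$ extend non-uniquely to $G$, then a counting argument forces $G-v$ to have fewer than $2^{\beta(n-1)}$ colorings with monochromatic $N(v)$; combined with the fact that at most a bounded number of colorings have all three colors on $N(v)$ (those do not extend), and using $2^{\beta(n-1)}\cdot$ (bounded loss) still exceeds $2^{\beta n}$ when $d_0=\lfloor1/\beta\rfloor$ because the "doubling" from even a single bichromatic local structure compensates for the factor $2^{-\beta}$, we reach the desired lower bound $2^{\beta n}$ on the number of $3$-colorings of $G$.

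The main obstacle I anticipate is making precise the claim that a positive fraction of colorings of $G-v$ restrict to a coloring of $N(v)$ using at most two colors, and more delicately that enough of them use exactly one color on $N(v)$ (so as to gain the multiplicative slack). Triangle-freeness gives that $N(v)$ is an independent set, so on any single coloring of $G-v$ the neighbors of $v$ are unconstrained among themselves; the subtlety is a global counting statement over all colorings. I would handle this by the symmetry/component-swapping idea already used in Lemma~\ref{lemma-manycolor}: partition $N(v)$-monochromatic versus $N(v)$-bichromatic colorings and exhibit, for each coloring with all three colors on $N(v)$ (the "bad" ones that do not extend), an injection into a larger set of "good" colorings of $G$ obtained by locally recoloring to collapse one color class near $v$ — the absence of short cycles through $v$ is exactly what guarantees such a local recoloring does not propagate. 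The bookkeeping to confirm $d_0=\lfloor1/\beta\rfloor$ is the right threshold (so that $2^{\beta(n-1)}$ times the gain is at least $2^{\beta n}$) is routine once the structural lemma is in place.
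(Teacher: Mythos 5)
Your overall plan---contrapositive, remove $v$, use the hypothesis on a smaller graph, and recover a factor of $2$ from the freedom at $v$---is the right shape, but it has a genuine gap at exactly the point you flag as the ``main obstacle,'' and that obstacle is not surmountable along the route you sketch. If you only delete $v$ and work with $G-v$, you have no control whatsoever over how the $2^{\beta(n-1)}$ colorings of $G-v$ behave on $N(v)$: nothing prevents all of them from using all three colors on $N(v)$ (in which case none extends to $G$), and the proposed injection ``by locally recoloring to collapse one color class near $v$'' does not work, because recoloring a neighbor of $v$ can propagate arbitrarily far through the graph; the absence of $5$-cycles through $v$ only forbids paths of length exactly $3$ between two neighbors of $v$ avoiding $v$, and says nothing about the long alternating paths that obstruct such a recoloring. (Incidentally, your cycle-length bookkeeping is off by one: a path of length $k$ between two neighbors of $v$ closes up with $v$ into a cycle of length $k+2$, so it is length-$3$ paths, not length-$4$ paths, that are excluded.)

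The missing idea is to \emph{identify all neighbors of $v$ to a single vertex} in $G-v$ (i.e., contract the star at $v$), and to apply the hypothesis to the resulting graph $H$ rather than to $G-v$. This is where the no-$5$-cycle hypothesis is actually used: a triangle in $H$ would come either from a triangle in $G$, from an edge between two neighbors of $v$ (a triangle through $v$), or from a path of length $3$ in $G-v$ between two neighbors of $v$ (a $5$-cycle through $v$), all of which are excluded; planarity is preserved since $H$ is a minor of $G$. Now every $3$-coloring of $H$ is automatically monochromatic on $N(v)$ and hence extends to at least two colorings of $G$, so $G$ has at least $2^{\beta\abs{V(H)}+1}$ colorings, and since $\abs{V(H)}\ge n-d_0\ge n-1/\beta$ this is at least $2^{\beta n}$. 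Note also that $d_0$ enters precisely through $\abs{V(H)}=n-\deg(v)\ge n-d_0$; in your version $H=G-v$ has $n-1$ vertices and $d_0$ never appears, which is a sign the degree bound is not being used where it must be.
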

\begin{proof}
      We prove the contrapositive. Assume that the graph~$G$ contains a vertex~$v$
      that has degree at most~$d_0$ and is not contained in any $5$-cycle. Let~$H$
      be the graph obtained from~$G-v$ by identifying all the neighbors of~$v$ to
      a single vertex.  Note that $H$ is planar and triangle-free, and every
      $3$-coloring of~$H$ extends to two distinct $3$-colorings of~$G$, as we can
      freely choose two different colors for~$v$.  By assumptions, we know that $H$ has
      at least $2^{\beta\abs{V(H)}}$ distinct $3$-colorings; hence
      $G$ has at~least $2^{\beta\abs{V(H)}+1}$ distinct $3$-colorings.
      Since $\abs{V(H)}\ge n-d_0\ge n-1/\beta$, we deduce that $\beta\abs{V(H)}+1\ge\beta
      n$, which concludes the proof.
\end{proof}

A \emph{$5$-cycle decomposition} of a plane graph~$G$ is a pair~$(T,\Lambda)$, where~$T$ is a rooted tree and~$\Lambda$ is a function
mapping each vertex of~$T$ to a subset of the plane, such that the following conditions hold.
\begin{itemize}
\item Let~$v$ be a vertex of~$T$.  If $v$ is the root of~$T$, then $\Lambda(v)$ is the whole plane, and otherwise $\Lambda(v)$
is the open disk bounded by a separating $5$-cycle of~$G$.
\item Let~$u$ and $v$ be distinct vertices of $T$.  If $u$ is a descendant of~$v$, then $\Lambda(u)\subset \Lambda(v)$, that is, $\Lambda(u)$ is a proper
      subset of~$\Lambda(v)$.  If $u$ is neither a descendant nor an ancestor of~$v$, then $\Lambda(u)\cap\Lambda(v)=\varnothing$.
\end{itemize}
A vertex $x\in V(G)$ is \emph{caught by the decomposition} if there exists~$v\in V(T)$ such that $x$ is contained
in the boundary cycle of~$\Lambda(v)$.
The following is a consequence of the proof of a lemma by Asadi \emph{et al.}~\cite[Lemma~2.1]{submany}.
\begin{lemma}\label{lemma-laminar}
Every triangle-free plane graph~$G$ has a $5$-cycle decomposition~$(T,\Lambda)$
such that every vertex of~$G$ that is incident with a $5$-cycle is either incident
with a $5$-face of~$G$ or caught by~$(T,\Lambda)$.
\end{lemma}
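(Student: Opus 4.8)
The plan is to induct on~$|V(G)|$. If $G$ has no separating $5$-cycle, the decomposition with a single node (the root~$r$, with $\Lambda(r)$ the whole plane) works: every $5$-cycle~$C^*$ then has one of its two sides free of vertices of~$G$, so $C^*$ bounds a face, and that face is a $5$-face because every $5$-cycle in a triangle-free graph is induced; hence every vertex incident with a $5$-cycle is incident with a $5$-face. For the inductive step, choose a separating $5$-cycle~$C$ whose open disk~$D$ is inclusion-minimal among the open disks bounded by separating $5$-cycles of~$G$. Let $G_1$ be the subgraph of~$G$ drawn in the closed disk~$\overline D$, and let $G_2$ be the subgraph drawn in the closure of the exterior of~$C$ (equivalently, $G$ with everything strictly inside~$D$ deleted). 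Then $C\subseteq G_2$, the region~$D$ is a $5$-face of~$G_2$, and $|V(G_2)|<|V(G)|$ since $C$ is separating; moreover $G_1$ has no separating $5$-cycle, since such a cycle would also be separating in~$G$ with open disk contained in~$D$, hence equal to~$C$ by minimality of~$D$, whereas $C$ is not separating in~$G_1$. By induction, $G_2$ has a $5$-cycle decomposition~$(T_2,\Lambda_2)$ with the stated property. Form $(T,\Lambda)$ by adding to~$T_2$ a new leaf~$t_C$ with $\Lambda(t_C)=D$, made a child of the inclusion-minimal node of~$T_2$ whose region contains~$D$, or of the root if there is no such node. That $(T,\Lambda)$ is a valid $5$-cycle decomposition is routine: each region $\Lambda_2(v)$ is still bounded by a $5$-cycle that is separating in~$G$, and laminarity of the enlarged family is immediate because every cycle of~$G_2$ is disjoint from the region~$D$.

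It remains to verify the covering property for~$G$. Let $x\in V(G)$ be incident with a $5$-cycle~$C^*$ of~$G$; if $x\in V(C)$ then $x$ is caught by~$t_C$, so assume $x\notin V(C)$. If $C^*$ is a cycle of~$G_2$ (so in particular $x\in V(G_2)$), apply the inductive hypothesis to~$G_2$: either $x$ is caught by~$(T_2,\Lambda_2)$, and hence by~$(T,\Lambda)$; or $x$ lies on a $5$-face of~$G_2$, which is either a $5$-face of~$G$ or the face~$D$, and in the latter case $x\in V(C)$, excluded. If $x$ lies strictly inside~$D$ and $C^*\subseteq\overline D$, then $C^*$ is a $5$-cycle of~$G_1$; since $G_1$ has no separating $5$-cycle, the base case applied to~$G_1$ puts $x$ on a $5$-face of~$G_1$, which is not the outer face (bounded by~$C$, which misses~$x$) and therefore is a $5$-face of~$G$.

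The remaining cases are precisely those in which $C^*$ meets both the open disk~$D$ and the exterior of~$C$. For these, the crucial claim is the following: if a $5$-cycle~$C^*$ meets both~$D$ and the exterior of~$C$ and passes through a vertex~$y$ lying strictly inside~$D$ (respectively, strictly outside~$\overline D$), then following $C^*$ away from~$y$ in both directions until it first reaches~$C$ produces a subpath~$P$ of~$C^*$ with $2\le|P|\le 3$ whose endpoints~$a,b$ lie at distance exactly~$2$ on~$C$, and then $P$ together with the arc of~$C$ from~$a$ to~$b$ of complementary length ($3$ or~$2$) forms a $5$-cycle~$\Gamma$ through~$y$ lying in~$\overline D$ (respectively, in the closure of the exterior of~$C$). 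Granting the claim: in the ``strictly inside'' case $\Gamma\neq C$ and the open disk of~$\Gamma$ is a proper subset of~$D$, so minimality of~$D$ forces this open disk to contain no vertex (its exterior does, since $C$ is separating); being induced, $\Gamma$ then bounds a $5$-face of~$G$ containing~$x$. In the ``strictly outside'' case, $\Gamma$ is a $5$-cycle of~$G_2$ through~$x$, which reduces us to the case treated in the previous paragraph. This exhausts all cases.

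The heart of the matter---and where triangle-freeness is used essentially---is the proof of the claim, together with the bookkeeping of how $C^*$ can cross~$C$: the cycle~$C^*$ meets~$C$ in at least two vertices, which divide it into arcs, each lying strictly on one side of~$C$ or consisting of a single edge of~$C$, and~$y$ is an internal vertex of one of these arcs, namely~$P$, on the appropriate side. The main obstacle is ruling out $|P|=4$ and, especially, the configuration $|P|=3$ with $a,b$ adjacent on~$C$; I expect to handle both by noting that in each case the rest of~$C^*$ (of length $1$ or~$2$) either runs entirely on~$C$ or on the same side as~$P$---which forces $C^*$ onto a single side of~$C$, contrary to hypothesis---or else closes up with the edge~$ab$ into a triangle, forbidden by triangle-freeness. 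Once $|P|\in\{2,3\}$ and $a,b$ are at distance~$2$ on~$C$, the path~$P$ and the complementary arc of~$C$ meet only in~$a$ and~$b$, so $\Gamma$ is a genuine $5$-cycle, and the remaining checks ($\Gamma$ lies in~$\overline D$ or in the closed exterior of~$C$, and $\Gamma\neq C$) are immediate.
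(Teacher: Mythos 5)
Your proof is correct. Note that the paper does not actually prove Lemma~\ref{lemma-laminar}: it is quoted as a consequence of the proof of Lemma~2.1 of Asadi, Dvořák, Postle and Thomas, so what you have supplied is a self-contained argument for a statement the paper only cites. Your route --- peel off an inclusion-minimal separating $5$-cycle $C$, recurse on the exterior part $G_2$, and add the disk of $C$ as a single new tree node --- is sound, and all the delicate points check out: $G_1$ has no separating $5$-cycle by minimality of $D$; the faces of $G_2$ are exactly $D$ together with the faces of $G$ not contained in $D$ (so a $5$-face of $G_2$ other than $D$ is a $5$-face of $G$); and laminarity of the enlarged family follows since $C$ has no chords (triangle-freeness) and hence nothing of $G_2$ enters $D$. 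The ``heart of the matter'' you flag, the uncrossing claim, also goes through exactly as you sketch, and in fact the bound $\abs{E(P)}\le 3$ needs no case analysis at all: a $5$-cycle meeting both sides of $C$ has at least two vertices on $C$ and at least one strictly on each side, so at most two of its vertices lie strictly inside $D$, whence $P$ has at most two internal vertices. The only genuinely nontrivial point is ruling out that the two attachment vertices $a,b$ are adjacent on $C$, and there your observation is exactly right: the complementary arc of $C^*$ has one or two edges, runs through a vertex off $C$ on one side or the other (it cannot run along $C$, since $C$ has no chords and two adjacent vertices of a $5$-cycle are not joined by a $2$-path on it), and together with the edge $ab\in E(C)$ it closes a triangle on whichever side it lies --- either directly or, in the ``same side as $P$'' subcase, by forcing $C^*$ entirely into $\overline D$ or into the closed exterior, contrary to hypothesis. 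This uncrossing of a $5$-cycle against a minimal separating $5$-cycle in a triangle-free plane graph is the same mechanism that underlies the cited lemma of Asadi \emph{et al.}; your proof packages it as a clean induction and can stand in place of the citation.
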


\noindent
Combining these results, we obtain the following.
\begin{corollary}\label{cor-decomp}
Let~$\beta\in(0,1/4)$ and let~$n$ be an integer such that every planar
triangle-free graph~$H$ with less than~$n$ vertices has at least~$2^{\beta\abs{V(H)}}$
distinct $3$-colorings.  Set~$d_0=\lfloor 1/\beta\rfloor$ and
$\gamma=\frac{d_0-3}{5(d_0-1)}$.  Let~$G$ be a plane triangle-free graph with $n$
vertices and $s_5$ faces of length~$5$.  If $G$ has less than~$2^{\beta n}$
distinct $3$-colorings, then $G$ has a $5$-cycle decomposition~$(T,\Lambda)$
satisfying $\abs{V(T)}+s_5\ge \gamma n$.
\end{corollary}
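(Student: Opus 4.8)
My plan is to combine Lemmas~\ref{lemma-minc} and~\ref{lemma-laminar} with the classical edge bound for triangle-free planar graphs, through a double count of the vertices of $G$ of small degree. First I would dispose of trivialities: assume $G$ has fewer than $2^{\beta n}$ distinct $3$-colorings, as otherwise there is nothing to prove, and note that this already forces $n\ge 3$, since a triangle-free graph on fewer than three vertices has at least $2^{\beta n}$ distinct $3$-colorings when $\beta<1/4$. Note also that $\beta<1/4$ gives $d_0=\lfloor 1/\beta\rfloor\ge 4$, so $\gamma$ is a positive real. Then I would extract the structural input: by Lemma~\ref{lemma-minc}, every vertex of $G$ of degree at most $d_0$ lies on a $5$-cycle; in particular $G$ has minimum degree at least~$2$, because a vertex of degree at most~$1$ has degree at most~$d_0$ yet lies on no cycle. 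Next I would apply Lemma~\ref{lemma-laminar} to fix a $5$-cycle decomposition $(T,\Lambda)$ of $G$ in which every vertex incident with a $5$-cycle is incident with a $5$-face or caught by $(T,\Lambda)$; this is the decomposition I would show works. Combining the last two facts, every vertex of $G$ of degree at most~$d_0$ is incident with a $5$-face or caught by $(T,\Lambda)$.

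Now I would carry out the count. Let $k$ be the number of vertices of $G$ of degree at least $d_0+1$. Using minimum degree at least~$2$, $2\abs{E(G)}=\sum_{v\in V(G)}\deg(v)\ge 2(n-k)+(d_0+1)k=2n+(d_0-1)k$, and since $G$ is simple, triangle-free and planar with $n\ge 3$ vertices, $\abs{E(G)}\le 2n-4$. Together these give $(d_0-1)k\le 2n-8<2n$, hence $k<\frac{2n}{d_0-1}$, so the number of vertices of $G$ of degree at most $d_0$ exceeds $n-\frac{2n}{d_0-1}=\frac{(d_0-3)n}{d_0-1}$. On the other hand, each node of $T$ distinct from the root catches at most the five vertices of its bounding $5$-cycle (and the root catches none), so at most $5\abs{V(T)}$ vertices of $G$ are caught; and at most $5s_5$ vertices of $G$ are incident with a face of length~$5$, since such a face is incident with at most five vertices. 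Hence the number of vertices of degree at most $d_0$ is at most $5\abs{V(T)}+5s_5$. Comparing the two bounds yields $5\bigl(\abs{V(T)}+s_5\bigr)>\frac{(d_0-3)n}{d_0-1}$, that is, $\abs{V(T)}+s_5>\gamma n$, which is even slightly stronger than required.

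The argument is short because Lemmas~\ref{lemma-minc} and~\ref{lemma-laminar} carry all the weight, and the only step that will require attention is the bookkeeping that produces exactly the constant $\gamma=\frac{d_0-3}{5(d_0-1)}$ rather than a weaker fraction. The crucial observation there is that having few $3$-colorings forces minimum degree at least~$2$: this is what lets one replace the trivial lower bound $0$ by the term $2(n-k)$ for the contribution of the small-degree vertices to the degree sum, thereby improving the denominator from $d_0+1$ to $d_0-1$. Everything else is routine.
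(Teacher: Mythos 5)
Your proposal is correct and follows essentially the same route as the paper: Lemma~\ref{lemma-minc} to place all small-degree vertices on $5$-cycles, the triangle-free planar edge bound to show more than $\frac{d_0-3}{d_0-1}n$ vertices have degree at most $d_0$, and Lemma~\ref{lemma-laminar} plus the count of at most $5(\abs{V(T)}+s_5)$ vertices caught or on $5$-faces. The only cosmetic difference is that you use $\abs{E(G)}\le 2n-4$ (justifying $n\ge 3$ along the way) where the paper invokes average degree less than $4$; both yield the same bound.
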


\begin{proof}
By Lemma~\ref{lemma-minc}, every vertex of~$G$ of degree at most~$d_0$ is
contained in a $5$-cycle, so in particular $G$ has minimum degree at least~$2$.
Let~$n_0$ be the number of vertices of~$G$ of degree greater than~$d_0$.  Since
$G$ is planar and triangle-free, its average degree is less than~$4$, and thus
$4n>(d_0+1)n_0+2(n-n_0)=2n+(d_0-1)n_0$, and $n_0<\frac{2}{d_0-1}n$.  Hence, $G$
contains more than~$\frac{d_0-3}{d_0-1}n$ vertices of degree at most~$d_0$, which
are all contained in $5$-cycles.  Let~$(T,\Lambda)$ be a $5$-cycle decomposition
obtained by Lemma~\ref{lemma-laminar}.  Note that at most~$5(\abs{V(T)}+s_5)$ vertices
are caught by~$(T,\Lambda)$ or incident with a $5$-face of~$G$, and thus the bound
follows.  \end{proof}

Given a $5$-cycle decomposition~$(T,\Lambda)$ of a graph~$G$ and a vertex~$v\in
V(T)$ with children~$v_1, \dotsc, v_k$ in~$T$, we define~$G_v$ to be the subgraph
of~$G$ drawn in the subset of the plane obtained from the closure of~$\Lambda(v)$
by removing~$\bigcup_{i=1}^k \Lambda(v_i)$.  We say that the decomposition is
\emph{maximal} if for every~$v\in V(T)$, the graph~$G_v$ contains no
separating~$5$-cycle.  A vertex~$v$ of~$V(T)$ is \emph{rich} if either $v$ is the
root of~$T$ or every precoloring of the outer face of~$G_v$ extends to at least
two distinct $3$-colorings of~$G_v$; otherwise, $v$ is \emph{poor}.
These notions are illustrated in Figure~\ref{fig-suburb}.

\begin{figure}[!hbt]
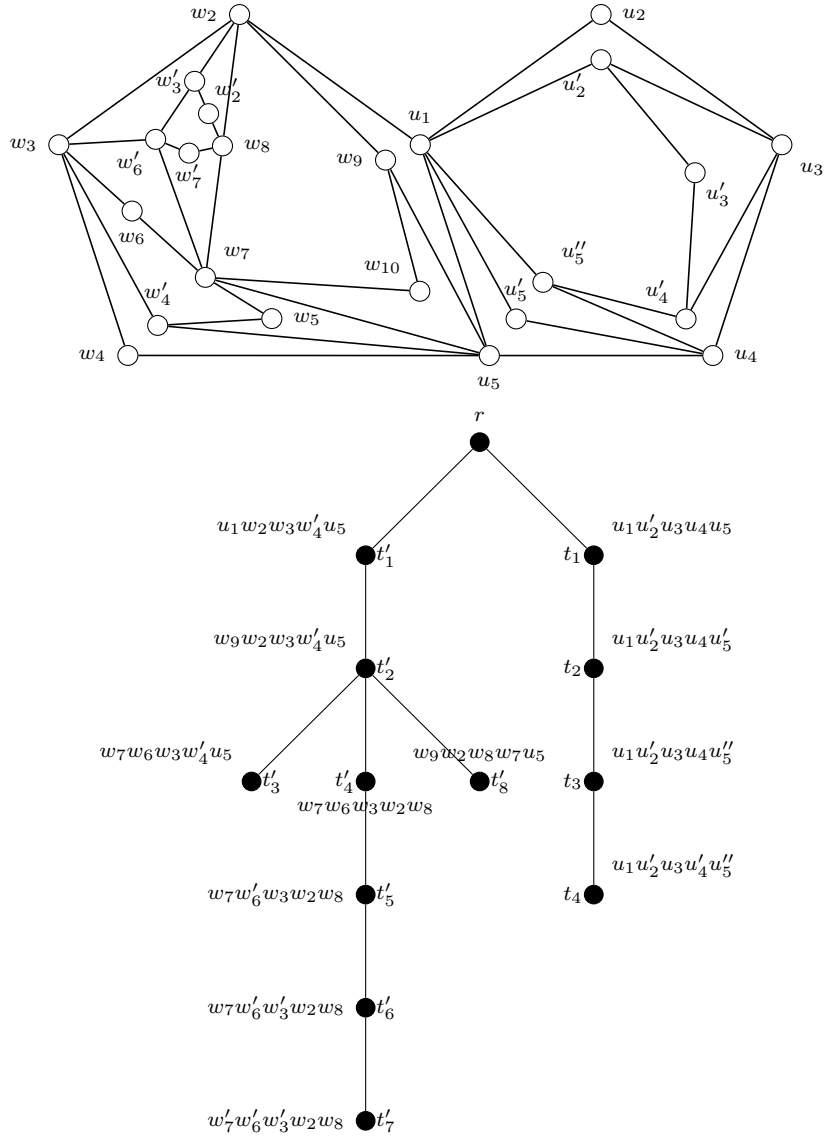

      \begin{center}
\begin{tikzgraph}
\begin{scope}[rotate=72]
\path  (90:2.5)  node[vertex,label=above:$u_1$] (a1) {}
    -- (162:2.5) node[vertex,label=below:$u_5$] (a5) {}
    -- (234:2.5) node[vertex,label=right:$u_4$] (a4) {}
    -- (306:2.5) node[vertex,label=below right:$u_3$] (a3) {}
    -- (18:2.5) node[vertex,label=right:$u_2$] (a2) {};
\draw[edge] (a1)--(a2)--(a3)--(a4)--(a5)--(a1);

\path  (162:1.9) node[vertex,label={[label distance=-.5mm]90:$u_5'$}] (a55) {}
    -- (162:1.3) node[vertex,label=above right:$u_5''$] (a555) {}
    -- (234:1.9) node[vertex,label={[label distance=-.4mm]135:$u_4'$}] (a44) {}
    -- (18:1.9)  node[vertex,label={[label distance=-.8mm]below left:$u_2'$}] (a22) {}
    -- (306:1.3) node[vertex,label={[label distance=-1.5mm]-75:$u_3'$}] (a33) {};

    \draw[edge] (a555)--(a4)--(a55)--(a1)--(a555);
    \draw[edge] (a3)--(a22)--(a1);
    \draw[edge] (a44)--(a33)--(a22);
    \draw[edge] (a555)--(a44)--(a3);

\path (a1)--++(72:2.9377) node[vertex,label=left:$w_2$](w2){}--++(-216:2.9377) node[vertex,label=left:$w_3$](w3){} --++(216:2.9377) node[vertex,label=left:$w_4$](w4){};
\path (w4)--++(.5,-.25) node[vertex,label={[label distance=-.1mm]88:$w'_4$}](w44){}--++(.8,-.4) node[vertex,label=above right:$w_7$](w8){}--++(-.25,-1) node[vertex,label=right:$w_5$](w55){};
\path (w3)--(w8) node[vertex,midway,label=below:$w_6$](w7){} -- (w2) node[vertex,midway,label=right:$w_8$](w9){};
\path (a5)--++(245:-1.25) node[vertex,label=above left:$w_{10}$](w11){};
\path (a1)--++(312:-.5) node[vertex,label=left:$w_{9}$](w10){};
\path (w7)--++(1,0) node[vertex,label={[label distance=-1.2mm]-115:$w'_6$}](w66){} --(w2) node[vertex,midway,label={[label distance=-1.3mm]-182:$w'_{3}$}](w12){};

\path (w66)--(w9) node[vertex,midway,below,label={[label distance=-1.3mm]-90:$w'_7$}](w77){};
\path (w12)--(w9) node[vertex,midway,label={[label distance=-7.5mm]-84:$w'_2$}](w22){};

\draw[edge] (w66)--(w77)--(w9);
\draw[edge] (w12)--(w22)--(w9);
\draw[edge] (w3)--(w66)--(w8);
\draw[edge] (w66)--(w12)--(w2);
\draw[edge] (a1)--(w2)--(w9)--(w8)--(a5)--(w4)--(w3)--(w44)--(w55)--(w8)--(w7)--(w3)--(w2);
\draw[edge] (w2)--(w10)--(a5)--(w44);
\draw[edge] (w8)--(w11)--(w10);
\end{scope}
\end{tikzgraph}
\begin{tikzgraph}
  \node[p,label=above:$r$] {}
    child { node[p,label=above left:$u_1w_2w_3w'_4u_5$](tt1) {}
      child { node[p,label=above left:$w_9w_2w_3w'_4u_5$](tt2) {}
        child { node[p,label=above left:$w_7w_6w_3w'_4u_5$](tt3) {} }
        child { node[p,label=below:$w_7w_6w_3w_2w_8$](tt4) {}
          child { node[p,label=left:$w_7w'_6w_3w_2w_8$](tt5) {} 
            child { node[p,label=left:$w_7w'_6w'_3w_2w_8$](tt7){} 
              child { node[p,label=left:$w'_7w'_6w'_3w_2w_8$](tt8){} } } } }
        child { node[p,label=above:$w_9w_2w_8w_7u_5$](tt6) {} }
} } 
   child {node {} edge from parent[white] }
    child { node[p,label=above right:$u_1u_2'u_3u_4u_5$](t1) {} 
      child{ node[p,label=above right:$u_1u_2'u_3u_4u_5'$](t2) {} 
        child{ node[p,label=above right:$u_1u_2'u_3u_4u_5''$](t3) {}
          child{ node[p,label=above right:$u_1u_2'u_3u_4'u_5''$](t4) {} } } } };
\draw (t1) node[left]{$t_1$};
\draw (t2) node[left]{$t_2$};
\draw (t3) node[left]{$t_3$};
\draw (t4) node[left]{$t_4$};
\draw (tt1) node[right]{$t'_1$};
\draw (tt2) node[right]{$t'_2$};
\draw (tt3) node[right]{$t'_3$};
\draw (tt4) node[left]{$t'_4$};
\draw (tt5) node[right]{$t'_5$};
\draw (tt7) node[right]{$t'_6$};
\draw (tt6) node[right]{$t'_8$};
\draw (tt8) node[right]{$t'_7$};
\end{tikzgraph}
      \end{center}
      \caption{A graph~$G$ (top) along with its maximal $5$-cycle decomposition (bottom):
      for each vertex of the tree, bar the root~$r$, is shown the corresponding separating $5$-cycle of~$G$.
      The rich vertices are the root~$r$ and~$t'_2$; all other vertices of the tree being poor.}\label{fig-suburb}
\end{figure}

\begin{lemma}\label{lemma-poorpieces}
Let~$G$ be a plane triangle-free graph and let~$(T,\Lambda)$ be a maximal
$5$-cycle decomposition of~$G$. If $v\in V(T)$ is poor, then $G_v$ consists of the
$5$-cycle~$K_v$ bounding its outer face and another vertex adjacent to two
vertices of~$K_v$.
\end{lemma}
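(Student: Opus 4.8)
\noindent
The plan is to unpack what it means for $v$ to be \emph{poor} and then apply Lemma~\ref{lemma-2cols} twice: first to the $5$-cycle bounding the outer face of $G_v$, and then again to a $4$-cycle produced inside $G_v$ by the first application. Since $v$ is poor it is not the root of $T$, so the outer face of $G_v$ is bounded by a separating $5$-cycle $K_v$ of $G$; in particular $G_v\neq K_v$ (if $G_v=K_v$ and $v$ had a child, that child's defining $5$-cycle would be a subgraph of the $5$-cycle $K_v$, hence equal to $K_v$, contradicting that the associated open discs are properly nested; and if $v$ had no child then $K_v$, being separating, would still enclose a vertex of $G$). By definition of poor there is a $3$-colouring $\psi$ of $K_v$ that does not extend to at least two $3$-colourings of $G_v$, so Lemma~\ref{lemma-2cols} applied to $G_v$, $K_v$ and $\psi$ yields a vertex $u\in V(G_v)\setminus V(K_v)$ adjacent to two vertices $a,b$ of $K_v$ with $\psi(a)\neq\psi(b)$. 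As $G_v$ is triangle-free, $a$ and $b$ are non-adjacent, hence at distance two on $K_v$; write $K_v=a\,c\,b\,d\,e$. Then $\psi(a),\psi(b),\psi(c)$ are pairwise distinct.

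Next I would look at how the cycles $D:=u\,a\,c\,b$ and $F:=u\,a\,e\,d\,b$ (which share only the edges $ua$ and $ub$) cut the closed disc bounding $G_v$ into two regions. If some vertex of $G_v$ lay strictly inside $F$, then $F$ would be a $5$-cycle of $G_v$ with $c$ on its other side, hence a separating $5$-cycle of $G_v$, contradicting maximality of the decomposition; so nothing lies strictly inside $F$ (and no chord of $F$ does, as each would create a triangle). It then remains to understand the subgraph $H$ of $G_v$ drawn in the closed region bounded by the $4$-cycle $D$. In any $3$-colouring extending $\psi$, the vertex $u$ is forced to the unique colour $t\notin\{\psi(a),\psi(b)\}$ (and $t=\psi(c)$), so $D$ carries the proper $3$-colouring $u\mapsto t$, $a\mapsto\psi(a)$, $c\mapsto t$, $b\mapsto\psi(b)$, which uses all three colours.

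Now I would apply Lemma~\ref{lemma-2cols} to $H$, the $4$-cycle $D$, and this colouring, and distinguish its three outcomes. If the colouring extends to at least two $3$-colourings of $H$, then, combining these with the forced colour of $u$ and the emptiness of the inside of $F$, $\psi$ extends to at least two $3$-colourings of $G_v$, contradicting that $v$ is poor. If there is a vertex $z\in V(H)\setminus V(D)$ adjacent to two vertices of $D$ of distinct colours, then triangle-freeness forces $z$ to be adjacent to two \emph{non-adjacent} vertices of $D$ of distinct colours, and the only such pair is $\{a,b\}$; but then $z\,a\,e\,d\,b$ is a $5$-cycle of $G_v$, and since the edges $za,zb$ subdivide the region bounded by $D$ into a part containing $c$ and a part containing $u$, deleting $\{z,a,e,d,b\}$ separates $c$ from $u$, so $z\,a\,e\,d\,b$ is a separating $5$-cycle, again contradicting maximality. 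Hence the only surviving outcome is $H=D$; together with the emptiness of the inside of $F$ this gives $V(G_v)=V(K_v)\cup\{u\}$, and since any edge other than those of $K_v$, $ua$ and $ub$ would create a triangle, $G_v$ is exactly $K_v$ together with the vertex $u$ adjacent to the two vertices $a,b$ of $K_v$, as claimed.

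I expect the main obstacle to be the second application of Lemma~\ref{lemma-2cols} and, within it, the verification that $z\,a\,e\,d\,b$ is genuinely a separating cycle: this needs a careful reading of how the four edges $ua,ub,za,zb$ subdivide the disc bounded by $K_v$, and of the fact that $u$'s colour being forced is precisely what makes $D$ carry a three-coloured (rather than two-coloured) boundary, so that Lemma~\ref{lemma-2cols} is applicable at all. The opening reduction $G_v\neq K_v$ also needs a word, since it rests on the structure of the $5$-cycle decomposition rather than on colourings.
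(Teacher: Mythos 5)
Your proof is correct and follows essentially the same route as the paper: one application of Lemma~\ref{lemma-2cols} to $K_v$ produces the vertex $u$ adjacent to two distance-two vertices of distinct colours, maximality forces the $5$-cycle $uaedb$ to bound a face, and a second application of Lemma~\ref{lemma-2cols} to the $4$-cycle $uacb$ either finishes or yields a second separating $5$-cycle $zaedb$, contradicting maximality. Your version merely makes explicit a few points the paper leaves implicit (why $G_v\neq K_v$, why the second $5$-cycle is genuinely separating), so there is nothing to correct.
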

\begin{proof}
Since $v$ is poor, there exists a $3$-coloring~$\psi$ of~$K_v$ that extends to
a unique $3$-coloring~$\varphi$ of~$G_v$.  Let~$K_v=y_1y_2\dotso y_5$. The definitions
imply that $G_v\neq K_v$.  Thus Lemma~\ref{lemma-2cols} yields that there exists
a vertex $x\in V(G_v)\setminus V(K_v)$ adjacent to two vertices of~$K_v$ of
distinct colors, which can be assumed to be~$y_1$ and~$y_3$.  Since the
decomposition is maximal, the $5$-cycle~$y_1xy_3y_4y_5$ bounds a face of~$G_v$.
If the $4$-cycle~$Q=y_1y_2y_3x$ also bounds a face, then the conclusion of the
lemma holds.  Hence assume that~$Q$ does not bound a face. Because $v$ is poor,
the precoloring of~$Q$ given by~$\varphi$ extends to exactly one $3$-coloring of
the subgraph of~$G_v$ drawn inside~$Q$. So by Lemma~\ref{lemma-2cols}, there
exists a vertex~$x'\in V(G_v)\setminus (V(K_v)\cup \{x\})$ adjacent to two
vertices of~$Q$ with different colors.  Since $\varphi(y_1)\neq\varphi(y_3)$, we have
$\varphi(y_2)=\varphi(x)$ and thus $x'$ is adjacent to~$y_1$ and~$y_3$.  However, this
implies that $G_v$ contains a separating $5$-cycle, namely~$y_1x'y_3y_4y_5$, which
contradicts the assumption that the decomposition~$(T,\Lambda)$ is maximal.
\end{proof}

Lemma~\ref{lemma-poorpieces} implies that in a maximal $5$-cycle
decomposition~$(T,\Lambda)$, each poor vertex of~$T$ has at most one son.  For
a poor vertex~$v$, the \emph{inner face} of~$G_v$ is its $5$-face different from
the outer face.  A path~$P=v_1v_2\ldots v_k$ of poor vertices of~$T$ such that
$v_1$ is the ancestor of all the vertices of the path is called
a \emph{$k$-suburb}.  Let~$G_P=G_{v_1}\cup\dotsb\cup G_{v_k}$, and define the \emph{inner
face of~$G_P$} to be the inner face of~$G_{v_k}$. 
In the example shown in Figure~\ref{fig-suburb}, the path~$P=t_1\dotso t_4$ is a $4$-suburb,
and the graph~$G_P$ is the subgraph of~$G$ induced by~$\{u_1,u_3,u_4,u_5,u'_2,u'_3,u'_4,u'_5,u''_5\}$.
We say that the $k$-suburb~$P$ is
\emph{upwardly mobile} if every precoloring of the outer face of~$G_P$ extends to
at least two distinct $3$-colorings of~$G_P$.
In the example shown in Figure~\ref{fig-suburb},
the path~$P'=t'_4\dotso t'_7$ is a $4$-suburb and it is updwardly mobile; the graph~$G_{P'}$ being the subgraph
of~$G$ induced by~$\{w_2,w_3,w_6,w_7,w_8,w'_2,w'_3,w'_6,w'_7\}$.

Let~$H$ be a plane graph with a plane subgraph~$F$.
A $3$-coloring~$\varphi$ of~$H$ is \emph{rearrangeable} with respect to~$F$ if
there exists a $3$-coloring~$\varphi'$ of~$H$ such that $\varphi'(v)=\varphi(v)$ for all $v\in V(F)$
and some $4$-face of~$H$ is bichromatic in~$\varphi'$.

\begin{lemma}\label{lemma-rearr}
Let~$G$ be a plane triangle-free graph and let~$(T,\Lambda)$ be a maximal $5$-cycle
decomposition of~$G$.  Suppose that $P=v_1v_2\dotso v_{11}$ is an $11$-suburb
in~$T$ and let~$F$ be the union of the boundary cycles of the outer and the inner
face of~$G_P$.  If $P$ is not upwardly mobile, then there exist distinct
non-adjacent vertices~$x$ and~$y$ of~$G_P$ incident with a common $4$-face, such
that every $3$-coloring~$\varphi$ of~$G_P$ that gives to~$x$ and~$y$ the same
color is rearrangeable with respect to~$F$.
\end{lemma}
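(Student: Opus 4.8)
The plan is to read off the shape of $G_P$ from Lemma~\ref{lemma-poorpieces}, use the failure of upward mobility to decide which pair of vertices to take, and then treat an arbitrary $3$-coloring by a recoloring argument confined to the interior of $G_P$.

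\emph{Structure of $G_P$.} By Lemma~\ref{lemma-poorpieces}, for each $i$ the graph $G_{v_i}$ consists of the $5$-cycle $K_i:=K_{v_i}$ together with one further vertex $x_i$ joined to two vertices $a_i,b_i$ of $K_i$ lying at distance two along $K_i$; let $c_i$ be the common neighbor of $a_i$ and $b_i$ on $K_i$. Then $x_i$ splits the interior of $K_i$ into the $4$-face $\Phi_i$ bounded by $a_ic_ib_ix_i$ and the $5$-face bounded by $K_{i+1}$, where $K_{i+1}$ is $K_i$ with $c_i$ replaced by $x_i$. Hence $G_P$ is a quadrangulation of the annulus whose boundary cycles are $K_1$ (the outer face of $G_P$) and $K_{12}$ (the inner face of $G_P$), its bounded faces are exactly $\Phi_1,\dots,\Phi_{11}$, and $F=K_1\cup K_{12}$. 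Inspecting the $4$-cycle $a_ic_ib_ix_i$ in a $3$-coloring gives the criterion I shall use throughout: $\Phi_i$ is bichromatic under $\varphi$ if and only if $\varphi(a_i)=\varphi(b_i)$ and $\varphi(c_i)=\varphi(x_i)$; in particular $\varphi$ has no bichromatic $4$-face in $G_P$ whenever $\varphi(a_i)\neq\varphi(b_i)$ for every $i$.

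\emph{Choosing the pair.} Since $P$ is not upwardly mobile, some precoloring $\psi_0$ of $K_1$ extends to a unique $3$-coloring $\varphi_0$ of $G_P$. Propagating uniqueness from level to level forces $\varphi_0(a_i)\neq\varphi_0(b_i)$ for every $i$ (otherwise the color of $x_i$ would not be determined), and consequently $\varphi_0(c_i)=\varphi_0(x_i)$ for every $i$. By the criterion above $\varphi_0$ has no bichromatic $4$-face, and being the unique extension of its own restriction to $V(F)$ it is not rearrangeable with respect to $F$. Therefore the pair $\{x,y\}$ we are after must satisfy $\varphi_0(x)\neq\varphi_0(y)$; of the two non-adjacent pairs on a $4$-face $\Phi_j$, namely $\{a_j,b_j\}$ and $\{c_j,x_j\}$, this rules out the second, and I will take $\{x,y\}=\{a_j,b_j\}$ for an index $j$ chosen below. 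Here I would use the length of $P$: the $11$ passages from $K_i$ to $K_{i+1}$ each use one of the five cyclic slots of the evolving $5$-tuple of vertices, so some slots are reused, and each reuse produces a vertex that is created and again consumed inside $P$ and hence lies in $V(G_P)\setminus V(F)$; this yields chains of interior vertices, and I would pick $j$ so that $\Phi_j$ together with a long enough such chain around it lies in $V(G_P)\setminus V(F)$.

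\emph{The rearrangement.} Let $\varphi$ be any $3$-coloring of $G_P$ with $\varphi(a_j)=\varphi(b_j)=:\alpha$. If $\Phi_j$ is already bichromatic, take $\varphi'=\varphi$. Otherwise $\{\varphi(c_j),\varphi(x_j)\}=\{1,2,3\}\setminus\{\alpha\}=:\{\delta,\epsilon\}$, and the goal is to make $\Phi_j$ (or a neighboring $4$-face) bichromatic by recoloring only interior vertices. The first attempt is a Kempe swap on the color pair $\{\delta,\epsilon\}$: if the $\{\delta,\epsilon\}$-component of $x_j$ avoids both $V(F)$ and $c_j$, swapping it recolors $x_j$ to $\varphi(c_j)$ and makes $\Phi_j$ bichromatic, and symmetrically with the roles of $c_j$ and $x_j$ exchanged. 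The remaining configurations — where both of these components meet $V(F)$, or where $c_j$ and $x_j$ lie in a common $\{\delta,\epsilon\}$-component — I would handle by cascading the recoloring inward along the chain of interior vertices selected above, stopping as soon as one $4$-face has been made bichromatic.

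The hard part is precisely this last step: controlling the Kempe components and the inward cascade so that the recoloring stays inside $V(G_P)\setminus V(F)$ and genuinely produces a bichromatic face rather than merely displacing the obstruction toward $F$. This is the point at which the length $11$ of the suburb (rather than some smaller bounded length) is essential, since it is what furnishes a chain of interior vertices long enough to absorb the cascade before it can reach $V(F)$; the bookkeeping of which vertices of $G_P$ are interior, and of the adjacencies along this chain, is the technical core of the argument.
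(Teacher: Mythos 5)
Your structural description of $G_P$ (read off from Lemma~\ref{lemma-poorpieces}), the bichromaticity criterion for the faces $\Phi_i$, and the deduction from non-upward-mobility that the unique extension $\varphi_0$ satisfies $\varphi_0(a_i)\neq\varphi_0(b_i)$ for every $i$ --- hence that the sought pair must be of the form $\{a_j,b_j\}$ --- all match the paper's setup. But the substance of the lemma is the claim that for a suitable $j$, \emph{every} $3$-coloring $\varphi$ with $\varphi(a_j)=\varphi(b_j)$ is rearrangeable, and this is precisely the part you leave as a plan. The Kempe-swap/``cascade inward'' strategy does not, as stated, address the real obstruction: the modified coloring must agree with $\varphi$ on all of $V(F)=V(K_1)\cup V(K_{12})$, yet $G_P$ has at most $16$ vertices of which at least $8$ lie in $V(F)$, so the interior is a thin strip hugging $F$ and a $\{\delta,\epsilon\}$-Kempe component of $x_j$ or $c_j$ can hit $V(F)$ after one or two steps no matter how long the suburb is. Your stated reason for needing length $11$ (``a chain long enough to absorb the cascade'') is also not what the length is for, and the choice of $j$ is never actually made.

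The paper avoids global recolorings altogether. It isolates three local configurations --- (i) an interior degree-two vertex on a $4$-face, (ii) two adjacent interior degree-three vertices one of which sees only $4$-faces, and (iii) a specific interior degree-four pattern --- and for each verifies directly that recoloring at most three interior vertices makes a $4$-face bichromatic whenever the designated pair receives the same color. The length $11$ then enters purely combinatorially: the uniqueness propagation pins the colors on $K_1$ to (WLOG) $1,2,3,1,3$, which forces every $d_i$ into $\{1,2,3\}$, and an analysis of the length-$11$ word $D=d_1\dotso d_{11}$ over $\{1,2,3\}$ (no repeated letter, no $1,3,1$ or $3,1,3$, no $a,b,a,b$ with $\abs{a-b}=1$, and so on, each forbidden pattern producing one of the configurations) shows that some configuration must occur. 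This forbidden-subword analysis together with the explicit local recolorings is the content your proposal would still need to supply; as written it establishes the setup but not the lemma.
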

\begin{proof}
First, we argue that the conclusion of the lemma holds if $G_P$ contains one of the following configurations.
\begin{itemize}
\item[(i)] A vertex~$z\notin V(F)$ of degree two incident with a $4$-face.
\item[(ii)] Two adjacent vertices~$z,z'\notin V(F)$ of degree three, such that $z$ is only incident with $4$-faces.
\item[(iii)] A vertex $z\notin V(F)$ of degree four incident only with $4$-faces, such that two neighbors~$z_1,z_2\notin V(F)$
of~$z$ that are not incident with the same $4$-face at~$z$ have degree three, and $z_1$ is incident only with $4$-faces.
\end{itemize}
In each of these cases, we find two non-adjacent vertices~$x$ and~$y$ incident to
a $4$-face~$f$ in~$G_P$ and next we let~$\varphi$ be an arbitrary $3$-coloring
of~$G_P$ that gives~$x$ and~$y$ the same color.  In case~(i) let~$f=xzyu$ be
a $4$-face incident with~$z$. We can recolor~$z$ with~$\varphi(u)$ so that $f$ is
now bichromatic since $\varphi(x)=\varphi(y)$. In case~(ii), let~$f=zxuy$,
$xzz'x'$, and~$yzz'y'$ be the $4$-faces incident with~$z$.  Since
$\varphi(x)=\varphi(y)$, we can assume that $\varphi(x)=\varphi(y)=1$ and
$\varphi(u)=2$.  Consequently, $\varphi(x')\neq 1\neq \varphi(y')$, and we can
recolor~$z'$ by color~$1$ and $z$ by color~$2$ to make~$f$ bichromatic.  In
case~(iii), let~$zz_1xx'$, $zz_1yy'$, $zz_2x''x'$, and~$zz_2y''y'$ be the
$4$-faces incident with~$z$, and let~$f=xz_1yu$ be the further $4$-face incident
with~$z_1$.  Suppose that $\varphi(x)=\varphi(y)=1$ and $\varphi(u)=2$.  If
$\varphi(z)\neq 2$, then we can recolor~$z_1$ by color~$2$ to make~$f$
bichromatic. If $\varphi(z)=2$, then $\varphi(x')=\varphi(y')=3$ and
$\varphi(x'')\neq 3\neq\varphi(y'')$. Therefore we can recolor~$z_2$ by color~$3$,
$z$ by color~$1$, and~$z_1$ by color~$2$ to make~$f$ bichromatic.

Note that Lemma~\ref{lemma-poorpieces} applies to each of~$v_1,\dotsc,v_{11}$.
For~$i\in\{1,\dotsc,11\}$, let the vertices of the outer face of~$G_{v_i}$ be
labelled $u_1^{i-1}u_2^{i-1}\dotso u_5^{i-1}$ and let the vertices of the inner
face of~$G_{v_{11}}$ be labelled $u_1^{11}u_2^{11}\dotso u_5^{11}$, with the
labels chosen so that for each~$i\in\{1,\dotsc,11\}$, there is a unique
index~$d_i\in\{1,\dotsc,5\}$ such that $u_{d_i}^{i-1}\neq u_{d_i}^i$. Hence,
$u_j^{i-1}=u_j^{i}$ for precisely four values of~$j\in\{1,\ldots,5\}$.

Suppose
that the suburb~$P$ is not upwardly mobile, and let~$\psi_0$ be a precoloring of
its outer face that extends to a unique $3$-coloring~$\psi$ of~$G_P$.  Observe
that for~$i\in\{1,\dotsc,11\}$ the neighbors of~$u_{d_i}^i$ in the outer face
of~$G_{v_i}$ must have different colors, and thus
$\psi(u_{d_i}^i)=\psi(u_{d_i}^{i-1})$.  We conclude that $\psi(u^i_j)=\psi(u^0_j)$
for each~$i\in\{1,\dotsc,11\}$ and each~$j\in\{1,\dotsc,5\}$.

By symmetry, we can assume that $\psi(u^0_1)=1$, $\psi(u^0_2)=2$, $\psi(u^0_3)=3$,
$\psi(u^0_4)=1$, and $\psi(u^0_5)=3$.  It follows that $d_i\in\{1,2,3\}$
for~$i\in\{1,\dotsc,11\}$, hence $u^0_4=\dotsb=u^{11}_4$ and
$u^0_5=\dotsb=u^{11}_5$.  Consider the sequence $D=d_1,\dotsc,d_{11}$. If two
consecutive elements of this sequence are equal, or if $D$ contains a consecutive
subsequence equal to~$1,3,1$ or~$3,1,3$, then $G_P$ contains the
configuration~(i).  If $D$ contains a consecutive subsequence~$a,b,a,b$ for some
distinct~$a,b\in \{1,2,3\}$ with~$\abs{a-b}=1$, then $G_P$ contains the
configuration~(ii).  In both cases, the conclusion of the lemma holds; hence,
assume that no such consecutive subsequences appear in~$D$.  Furthermore, if $D$
contains the consecutive subsequence~$3,1$, then the same graph~$G_P$ arises when
this subsequence is replaced by~$1,3$.  Hence we can assume that $D$ does not
contain the consecutive subsequence~$3,1$, and thus every appearance of~$3$ in~$D$
is followed by~$2$, except possibly for the one in the last position of~$D$.

If $D$ contains the consecutive subsequence~$1,3,2,1,3$ not containing any of the
last two elements of~$D$, then by the previous paragraph $D$ contains, as
a consecutive subsequence, either~$1,3,2,1,3,2,1$ or~$1,3,2,1,3,2,3$. This implies
that $G_P$ contains the configuration~(iii), and so the conclusion of the lemma
holds.  Hence we assume that $D$ does not contain such a consecutive subsequence.

Suppose that $D$ contains a consecutive subsequence~$1,3$, not containing the last
five elements of~$D$.  The next element following $3$ is necessarily~$2$.
The next element cannot be~$3$, as it would be followed by~$2$
and $D$ would contain a consecutive subsequence~$3,2,3,2$.  Hence, the next
element is~$1$ and by the previous paragraph the next one is~$2$, and so $G_P$
contains the configuration~(ii).  It follows that we can assume that $D$ does not
contain a consecutive subsequence~$1,3$ disjoint from the last five elements
of~$D$.  Hence, every appearance of~$1$ not contained in the last six elements
of~$D$ is followed by~$2$.

It follows that $D$ starts with one of the following sequences:
\begin{itemize}
      \item $1,2,3,2,1,2,3,2$;
      \item $2,1,2,3,2,1,2$; or
      \item $2,1,2,3,2,1,3$; or
      \item $2,3,2,1,2,3,2$; or
      \item $3,2,1,2,3,2,1,2$; or
      \item $3,2,1,2,3,2,1,3$.
\end{itemize}
In all the cases, $G_P$ contains the configuration~(ii) or~(iii),
and thus the conclusion of the lemma follows.
\end{proof}

\noindent
We are now ready to demonstrate Theorem~\ref{thm-eq-exp}.
\begin{proof}[Proof of Theorem~\ref{thm-eq-exp}]
We start by showing that (EXP) implies~(REU), for any~$\alpha\in(0,\beta)$.
Fix a planar triangle-free request graph~$(G,R_=,\varnothing,\text{unit})$ with
$n+\abs{R_=}$ vertices. Set~$r=\abs{R_=}$ and
$N=\left\lceil\frac{n(\log_2 3-\beta)}{\beta-\alpha}\right\rceil$.  We can assume that $r\ge
1$.  Every $3$-coloring~$\varphi$ of~$G-R_=$ greedily extends to a $3$-coloring
of~$G$: let~$s(\varphi)$ be the number of requests in~$R_=$ satisfied by any such
extension.  Let~$G'$ be the graph obtained from~$G$ by replacing each vertex
of~$R_=$ by $N$ clones, so $\abs{V(G')}=n+Nr$.  Observe that $\varphi$ extends to
exactly $2^{s(\varphi)N}$ $3$-colorings of~$G'$.  Let~$s_0$ be the maximum
of~$s(\varphi)$ taken over all $3$-colorings $\varphi$ of~$G-R_=$.  As the number
of~$3$-colorings of~$G-R_=$ is at most~$3^n$, it follows that the number
of~$3$-colorings of~$G'$ is at most~$2^{s_0N+n\log_2 3}$.  On the other hand,
(EXP) implies that the number of~$3$-colorings of~$G'$ is at
least~$2^{\beta(n+Nr)}$, and thus
\begin{align*}
      s_0N+n\log_2 3&\ge \beta(n+Nr)\\
      s_0&\ge \beta r - \frac{(\log_2 3-\beta)n}{N}\ge \alpha r.
\end{align*}
Hence, some $3$-coloring~$\varphi$ of~$G-R_=$ extends to a $3$-coloring of~$G$ that
satisfies at least~$\alpha\abs{R_=}$ of the requests, as required.

Next, we show that (REU) implies~(EXP), for~$\beta=\alpha/388$.  Suppose for
a contradiction that there exists a planar triangle-free graph~$G$ with less
than $2^{\beta\abs{V(G)}}$ $3$-colorings. We choose such a graph~$G$
with the least possible number~$n$ of vertices.  Let~$d_0=\lfloor 1/\beta\rfloor$ and
$\gamma=\frac{d_0-3}{5(d_0-1)}$.  Note that $d_0\ge388$, so $\gamma\ge\frac{77}{387}$.
Let~$s_5$ be the number of~$5$-faces of~$G$.  By Corollary~\ref{cor-decomp},
the graph~$G$ has a $5$-cycle decomposition~$(T,\Lambda)$ satisfying $\abs{V(T)}+s_5\ge
\gamma n$, and we can without loss of generality assume that the
decomposition is maximal.  Let~$r$ be the number of rich vertices of~$T$ and
let~$\ell$ be the number of poor leaves of~$T$.  Note that $s_5\ge \ell$.  Let~$S$
be a largest collection of pairwise disjoint $11$-suburbs in~$(T,\Lambda)$. Note that at
most~$10(r+\ell)$ poor vertices of~$T$ belong to no member of~$S$.  Let~$m$
be the number of upwardly mobile suburbs in~$S$, and let~$S_0$ be the subset
of~$S$ consisting of those suburbs that are not upwardly mobile.

For each rich vertex~$v$ and each~upwardly mobile suburb~$P$, every coloring of the outer
face of~$G_v$ and of~$G_P$ extends to at least two $3$-colorings.
Hence, we conclude that $G$ has at least $2^{r+m}$ $3$-colorings, and thus
$r+m<\beta n$.  Hence
\begin{align*}
  \abs{S_0}&\ge\frac{\abs{V(T)}-r-10(r+\ell)-11m}{11}\\
  &=\frac{\abs{V(T)}-11(r+m)-10\ell}{11}\\
  &>\frac{77/387-11\beta}{11}n-s_5.
\end{align*}

Let~$(G',R_=,\varnothing,\text{unit})$ be the request graph obtained from~$G$ by adding,
for every suburb in~$S_0$, a vertex to~$R_=$ adjacent to the two vertices~$x$ and~$y$
obtained from Lemma~\ref{lemma-rearr}.  By~(REU), there exists a $3$-coloring
satisfying $\alpha$-fraction of the requests, and by Lemma~\ref{lemma-rearr}, we
conclude that $G$ has a $3$-coloring with at least~$\alpha\abs{S_0}$ bichromatic
faces.  But then Lemma~\ref{lemma-manycolor} implies that $G$ has more than
$2^{(s_5+\alpha\abs{S_0})/6}\ge 2^{\frac{\alpha(77/387-11\beta)}{66}n}\ge 2^{\beta n}$
$3$-colorings, which is a contradiction.
\end{proof}

\section{Auxiliary results}\label{sec-aux} 
In the rest of the paper, we will use a number of results on coloring and list coloring,
which we present here.
Let us formally state Grötzsch's theorem with one of its extensions.
\begin{theorem}[Gr\"otzsch~\cite{grotzsch1959}, Thomassen~\cite{thom-torus}]\label{thm-grotzsch}
A planar triangle-free graph~$G$ is $3$-colorable.  Moreover, any precoloring of an $(\le\!5)$-cycle
in~$G$ extends to a $3$-coloring of~$G$.
\end{theorem}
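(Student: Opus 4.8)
The first assertion is Gr\"otzsch's classical theorem~\cite{grotzsch1959}, while the ``moreover'' part is Thomassen's precoloring-extension strengthening~\cite{thom-torus}; the plan is to outline a proof of the latter. Since an $(\le\!5)$-cycle $C$ of $G$ may be assumed to bound the outer face --- if $C$ is separating, color the two sides independently after embedding each with $C$ on its outer boundary --- it suffices to prove, by induction on $\abs{V(G)}+\abs{E(G)}$, the statement~$(\ast)$: \emph{if $G$ is a plane triangle-free graph whose outer face is bounded by a cycle $C$ with $\abs{C}\le 5$, then every proper $3$-coloring of $C$ extends to a proper $3$-coloring of $G$}.

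First I would dispose of the structural reductions. If $G=C$, there is nothing to do. If $C$ has a chord, or more generally if $G$ has a separating cycle $C'$ of length at most $5$, then $C'$ splits $G$ into two plane triangle-free graphs, each with an outer cycle of length at most $5$ and strictly fewer vertices or edges; one colors the part on one side of $C'$ first (by induction), and then the other part with $C'$ already precolored (induction again), obtaining the desired extension. If some vertex $v\notin V(C)$ has degree at most two, delete it, extend the precoloring to $G-v$ by induction, and finally color $v$, which sees at most two colors. Cut vertices are handled by induction on the blocks. After these reductions we may assume that $G$ is $2$-connected, has no separating cycle of length at most $5$, every face other than the outer one has length at least $4$, and every vertex not on $C$ has degree at least $3$.

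The heart of the matter is then to locate, in such a ``near-quadrangulation'', a \emph{reducible configuration} in the interior. Giving each vertex $v$ the charge $\deg(v)-4$ and each face $f$ the charge $\ell(f)-4$, Euler's formula makes the total charge equal to $-8$; hence, after suitable discharging rules that push charge from long faces and high-degree vertices towards the few $4$-faces and degree-$3$ vertices, some internal vertex or face must exhibit one of a short list of local configurations. For each such configuration one identifies an internal vertex, or a short path of internal vertices, whose deletion or identification produces a smaller instance of~$(\ast)$ to which the induction applies, and such that any resulting coloring extends back over the configuration (again because the deleted vertices see few colors). The main obstacle --- as is typical for results of this kind --- is to make the discharging mesh with the reducibility analysis so that \emph{every} leftover configuration is genuinely reducible; this is especially delicate when $\abs{C}=5$, where naive reductions may create a triangle or a separating $4$-cycle, and it is precisely here that Thomassen's device of strengthening $(\ast)$ to a list-coloring statement (lists of size $3$ in the interior, a precolored or slightly relaxed boundary on $C$) is what makes the induction close. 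Finally, Gr\"otzsch's theorem itself follows from the classical global discharging argument, in which $(\ast)$ serves as the tool for coloring the bounded pieces cut off by short separating cycles.
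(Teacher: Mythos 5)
The paper does not prove this statement at all: it is quoted from the literature (Gr\"otzsch for the first sentence, Thomassen's extension theorem for the ``moreover'' part) and used as a black box throughout. There is therefore no internal proof to compare yours against; the only question is whether your argument stands on its own, and it does not. Your preliminary reductions --- pushing a separating $(\le\!5)$-cycle to the outer face, splitting along separating short cycles, deleting internal vertices of degree at most two, handling cut vertices --- are correct and standard. But the entire mathematical content of the theorem is concentrated in the paragraph you leave as a placeholder: you never state the discharging rules, never exhibit the resulting list of unavoidable configurations, and never verify that each one is reducible (in particular, that the identifications you allude to create neither a triangle nor a new separating $(\le\!4)$- or $(\le\!5)$-cycle, and that the boundary precoloring survives the reduction). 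You concede as much when you write that the main obstacle is ``to make the discharging mesh with the reducibility analysis so that every leftover configuration is genuinely reducible''; that obstacle \emph{is} the theorem. As written, this is a plan for a proof, not a proof.

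Two cautions should you carry the plan out. First, the route you sketch is not that of the cited sources: Thomassen's proofs of the extension property proceed by direct induction on a plane graph with a precolored facial cycle, choosing the next vertex or short path to color and splitting along short separating cycles, with essentially no global discharging; the near-quadrangulation/discharging viewpoint belongs to later algorithmic treatments. Second, your appeal to ``strengthening $(\ast)$ to a list-coloring statement (lists of size $3$ in the interior)'' is dangerous in the triangle-free (girth~$4$) setting: by Voigt's example, cited in this very paper, triangle-free planar graphs are not $3$-choosable, so the naive list strengthening that closes the induction for girth~$5$ is false here. Thomassen's short list-color proof of Gr\"otzsch's theorem works with a carefully restricted class of list assignments, and identifying that class is precisely the nontrivial step your sketch omits.
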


Let us recall that Thomassen~\cite{thomassen1995-34} proved the following generalization
of $3$-choosability of planar graphs of girth at least $5$.
\begin{theorem}\label{thm-3choos}
Let~$G$ be a plane graph of girth at least~$5$, let~$P$ be a subpath
of~$G$ drawn in the boundary of the outer face of~$G$ with at most three vertices,
and let~$L$ be an assignment of lists to the vertices of~$G$, satisfying the following
conditions.
All vertices not incident with the outer face have lists of
size three, vertices incident with the outer face not belonging to~$V(P)$
have lists of size two or three, and vertices of~$P$ have lists of size one
giving a proper coloring of~$P$.
If the vertices with list of size two form an independent set, then $G$ is $L$-colorable.
\end{theorem}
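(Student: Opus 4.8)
The plan is to prove Theorem~\ref{thm-3choos} by induction on $\abs{V(G)}$, following the now-standard discharging-free reduction scheme that Thomassen introduced for such list-coloring statements. First I would dispose of the degenerate cases: if $G=P$ the claim is immediate, and if the outer boundary walk is not a cycle one can split along a cut vertex or a chord incident with the outer face, colour one side by induction, and then feed the resulting colouring of the shared vertices as an enlarged precoloured path into the other side (this is where having $P$ of size \emph{at most three} rather than exactly two matters, since a chord with one endpoint on $P$ may create a precoloured path of length three). So I may assume $G$ is $2$-connected with outer cycle $C$, and $P\subseteq C$.

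The core case analysis is on the structure near the outer face. Let $P=p_1\cdots p_k$ with $k\le 3$. If some vertex of $C$ not on $P$ has a list of size three, I would delete it (or rather, it can be coloured last) — more precisely, if $v\in V(C)\setminus V(P)$ has $\abs{L(v)}\ge 3$ and $v$ has a neighbour $u$ earlier on $C$, one reduces to a smaller instance; the delicate point is maintaining that the size-two vertices stay independent and that the precoloured path stays of length $\le 3$. The genuinely substantive steps are: (a) handling a chord of $C$, which separates $G$ into two plane graphs of smaller order, each with outer girth still $\ge 5$, and gluing the colourings; (b) handling the case where $C$ has no chord, where one identifies a suitable vertex $v$ of $C$ with small degree (at most four, say, using girth $\ge 5$ and planarity to bound the number of candidates), removes $v$, and updates the lists of its neighbours on $C$ by deleting a colour of $v$ chosen so that the remaining size-two vertices still form an independent set. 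One typically picks $v$ to be a vertex of $C\setminus P$ with at most one neighbour inside, and chooses the colour removed from $v$'s list so as not to empty or shrink below two any neighbour's list while keeping the size-$2$ set independent; the girth-$5$ condition guarantees $v$'s two $C$-neighbours are non-adjacent, which is exactly what is needed to keep independence after the update.

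The main obstacle, as usual in these arguments, is case~(b): one must show that when $C$ is chordless there is always a vertex on $C$ (outside $P$) whose neighbourhood structure is tame enough — typically a vertex with exactly one neighbour off $C$, or two consecutive such vertices — so that after deletion and list-shrinking the inductive hypothesis applies. This requires a short planarity/girth counting argument (the outer face together with the girth bound forces enough low-degree vertices on $C$), plus a careful bookkeeping of which colour to strip from the list of a remaining vertex so that the independence of the size-two set is preserved; the length-$\le 3$ precoloured path gives just enough slack to absorb the boundary effects near $p_1$ and $p_k$. Since Theorem~\ref{thm-3choos} is quoted verbatim from Thomassen~\cite{thomassen1995-34}, I would, in the write-up, either cite that paper for the full argument or reproduce the induction in the form above, emphasising only the two reduction moves (chord split and low-degree boundary vertex deletion) and the invariant that size-two vertices remain independent throughout.
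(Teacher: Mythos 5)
The paper does not prove Theorem~\ref{thm-3choos} at all: it is quoted as a known result of Thomassen~\cite{thomassen1995-34} and used as a black box. Your closing remark that you would ``either cite that paper for the full argument or reproduce the induction'' is therefore, in its first branch, exactly what the paper does, and that disposition is perfectly acceptable here.

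As a self-contained proof, however, your sketch has a genuine gap at precisely the point you yourself flag as ``the main obstacle.'' In case (b) (chordless outer cycle) you assert that a planarity/girth counting argument produces a boundary vertex of degree at most four whose deletion, together with a suitable colour-stripping of its interior neighbours, preserves the invariants. This is not how the argument can be made to work, and no amount of generic counting rescues it: the difficulty is not the \emph{existence} of a low-degree vertex on $C$ but its \emph{position}. When you delete $v$ and remove a colour from the lists of its interior neighbours, those neighbours may drop to lists of size two, and you must guarantee (i) that they are pairwise non-adjacent --- girth $5$ does give you this --- but also (ii) that none of them is adjacent to a pre-existing size-two vertex elsewhere on the boundary, and (iii) that the new size-two vertices do not violate the constraints near the ends of $P$. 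Thomassen's actual induction does not pick an arbitrary low-degree vertex; it works with the specific neighbour of an end of the precoloured path on $C$, and the case analysis of which colour to delete (and when one must instead colour and delete two consecutive boundary vertices) is the entire content of the proof. Your sketch names the invariant to be preserved but does not exhibit the reducible configuration or verify that any concrete reduction preserves it, so as written it is a plan rather than a proof. If you intend to include an argument rather than a citation, that case analysis must be carried out in full.
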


Theorem~\ref{thm-3choos} can be strengthened as follows.
\begin{theorem}[Dvořák and Kawarabayashi~\cite{dk}]\label{thm-dvokaw}
Let~$G$ be a plane graph of girth at least~$5$, let~$P=p_1\dotso p_k$ be a subpath
of~$G$ drawn in the boundary of the outer face of~$G$ with $k\le 3$, and
let~$L$ be an assignment of lists to the vertices of~$G$, satisfying the following
conditions.
\begin{itemize}
\item[\textrm{(i)}] All vertices not incident with the outer face have lists of
      size three, vertices incident with the outer face not belonging to~$V(P)$
      have lists of size two or three, and vertices of~$P$ have lists of size one
      giving a proper coloring of~$P$.
\item[\textrm{(ii)}] The graph~$G$ has no path~$v_1v_2v_3$ with
      $\abs{L(v_1)}=\abs{L(v_2)}=\abs{L(v_3)}=2$.
\item[\textrm{(iii)}] The graph~$G$ has no path~$v_1v_2v_3v_4v_5$ with
      $\abs{L(v_1)}=\abs{L(v_2)}=\abs{L(v_4)}=\abs{L(v_5)}=2$ and
      $\abs{L(v_3)}=3$.
\item[\textrm{(iv)}] If $\abs{V(P)}=3$, then at least one endvertex~$p$ of~$P$ is
      contained in no path~$pv_2v_3$ with $\abs{L(v_2)}=\abs{L(v_3)}=2$ and no
      path~$pv_2v_3v_4v_5$ with $\abs{L(v_2)}=\abs{L(v_4)}=\abs{L(v_5)}=2$ and
      $\abs{L(v_3)}=3$.
\end{itemize}
Then $G$ is $L$-colorable.
\end{theorem}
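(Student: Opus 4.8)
The plan is to argue by contradiction. I would take a counterexample $(G,P,L)$ minimizing $\abs{V(G)}+\abs{E(G)}$ and derive increasingly strong structural restrictions until no such $G$ can exist. First I would record the routine consequences of minimality: the graph $G$ is connected and $2$-connected, so its outer face is bounded by a cycle $C$; the path $P$ is an induced subpath of $C$ (otherwise a chord of $P$ or a shorter precolored subpath would yield a smaller instance); $G$ is \emph{$L$-critical}, meaning $G-v$ is $L$-colorable for every $v\notin V(P)$ and $G-e$ is $L$-colorable for every edge $e$; and the lists are as small as the hypotheses permit, so after trimming every interior vertex has list of size exactly $3$ and degree at least $3$, and every vertex of $C$ outside $P$ that is not forced to size $3$ by (ii)--(iv) has list of size exactly $2$.

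Next I would eliminate local structure by inductive splitting. If $C$ has a chord $uv$, then $C+uv$ separates $G$ into plane subgraphs $G_1$ and $G_2$ sharing only the edge $uv$; I would color the piece containing $P$ first (its outer cycle contains $P$ together with $u$ and $v$, both with lists of size at most $3$), then extend to the other piece with $u,v$ precolored---again an instance of the theorem, now with a $2$-vertex precolored path---checking that (ii) and (iii) survive because no forbidden path can cross the edge $uv$. A similar argument would forbid separating cycles of small length: color everything outside such a cycle, then color the inside with that cycle precolored, invoking Theorem~\ref{thm-grotzsch} when the inner part has no size-$2$ constraints left and the induction hypothesis otherwise. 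I would then rule out low-degree configurations on $C$: a degree-$2$ vertex of $C\setminus V(P)$ whose list has size $2$ and whose neighbor off $C$ also has list of size $2$ creates, together with the next vertex around $C$ and the relevant interior neighbor, a path forbidden by (ii) or (iii); and a vertex of $C\setminus V(P)$ with list of size $3$ having a single neighbor off $C$ can be deleted and recolored last. Compatible identifications of vertices at distance two along $C$ through a common off-$C$ neighbor would give the remaining reductions.

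The endgame would be a discharging argument. By Euler's formula, assigning to each vertex $v$ the charge $\deg(v)-4$, to each inner face $f$ the charge $\ell(f)-4$, and to the outer face the charge $\ell(C)-4$, the total charge is $-8$. Since $G$ has girth at least $5$, every inner face has nonnegative charge, and after the reductions above the only vertices of negative charge (degrees $2$ and $3$) lie on or just inside $C$, while the outer face carries a large positive charge. I would design rules sending charge from the outer face and from high-degree vertices and long faces toward these deficient vertices, arranged so that every deficient vertex ends with charge at least $0$; this is possible exactly because hypotheses (ii)--(iv) prevent two size-$2$ vertices of $C$ from being close, hence prevent two deficient features from drawing on the same charge, and because condition (iv) guarantees a ``clean'' endvertex of $P$ so that the two ends of the precolored path do not both overload their neighborhoods. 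Ending with every face and vertex at charge at least $0$ and the outer face strictly positive contradicts the total being $-8$.

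The step I expect to be the main obstacle is this discharging, and within it the interaction between $P$ and the size-$2$ vertices of $C$. When $\abs{V(P)}\le 2$ the boundary bookkeeping is comparatively mild, but the case $\abs{V(P)}=3$ is delicate: one must choose the transfer rules so that the deficiencies at all size-$2$ vertices, at all $5$-faces incident with $C$, and near \emph{both} ends of $P$ are covered simultaneously, and hypothesis (iv) is precisely the slack that makes this possible. The splitting reductions are also fiddly, since after each split one must verify that the two pieces still satisfy (i)--(iv)---in particular that no path forbidden by (ii) or (iii) straddles the split and that a valid precoloring of the separating cycle exists---which is where Theorem~\ref{thm-grotzsch} and the induction hypothesis have to be combined with care.
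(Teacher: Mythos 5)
First, a point of orientation: the paper does not prove Theorem~\ref{thm-dvokaw} at all --- it is quoted verbatim from Dvořák and Kawarabayashi~\cite{dk} and used as a black box. So there is no internal proof to compare against; what you have written must stand on its own as a proof of a substantial external theorem, and it does not. The decisive content of your argument is the sentence ``I would design rules sending charge \dots arranged so that every deficient vertex ends with charge at least $0$; this is possible exactly because hypotheses (ii)--(iv) prevent two size-$2$ vertices of $C$ from being close.'' That is a statement of hope, not a proof: no discharging rules are given, no verification that the final charges are nonnegative is attempted, and you yourself flag this step as the expected obstacle. For results of this type (Thomassen-style extension theorems for girth-$5$ planar graphs), the reductions plus a generic discharging endgame are known \emph{not} to close the argument without a carefully engineered list of reducible configurations matched to the discharging rules; the actual proof in~\cite{dk} is a long induction with extensive case analysis precisely because the ``deficient features'' along the outer cycle can interact in many ways that conditions (ii)--(iv) do not obviously forbid.

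There are also concrete slips in the reduction phase. A vertex of $C\setminus V(P)$ with list of size three and ``a single neighbor off $C$'' has degree three, so it cannot simply be ``deleted and recolored last'': its three neighbors may exhaust its list. The identification of two vertices at distance two along $C$ through a common off-$C$ neighbor must preserve girth at least $5$ and planarity and must not merge lists incompatibly; none of this is checked. And in the chord-splitting step, the piece $G_2$ receives a two-vertex precolored path $uv$, but you must also verify hypothesis (i) for $G_2$ --- in particular that the precoloring of $u$ and $v$ obtained from $G_1$ is proper and that no vertex interior to $G$ but on the outer face of $G_2$ violates the list-size requirements --- which again is asserted rather than shown. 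In short, the skeleton is a plausible guess at the shape of the argument in~\cite{dk}, but the proposal omits the entire core of the proof and cannot be accepted as a demonstration of the theorem.
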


We need the following variant of this result.
If $P$ is a path with $\abs{V(P)}=3$, we call the vertex of~$P$ of degree~$2$ the
\emph{middle vertex} of~$P$.  When $\abs{V(P)}\le 2$, we do not consider any
vertex of~$P$ to be the middle one.
\begin{lemma}\label{lemma-same}
Let~$G$ be a plane graph of girth at least~$5$, let~$P=p_1\dotso p_k$ be a subpath
of~$G$ drawn in the boundary of the outer face of~$G$ with $k\le 3$, and
let~$L$ be an assignment of lists to the vertices of~$G$, satisfying the following
conditions.
\begin{itemize}
\item[\textrm{(i)}] All vertices not incident with the outer face have
      lists~$\{1,2,3\}$, vertices incident with the outer face not belonging
      to~$V(P)$ have lists~$\{1,2\}$ or~$\{1,2,3\}$, and vertices of~$P$ have
      lists of size one giving a proper $3$-coloring of~$P$.
\item[\textrm{(ii)}] The graph~$G$ has no path~$v_1v_2v_3$ with
      $\abs{L(v_1)}=\abs{L(v_2)}=\abs{L(v_3)}=2$.
\item[\textrm{(iii)}] If $\abs{V(P)}=3$, then for one of the endvertices~$p$
      of~$P$, the graph~$G$ contains no path $pv_1v_2$ with $\abs{L(v_1)}=\abs{L(v_2)}=2$.
\end{itemize}
Then $G$ is $L$-colorable.
\end{lemma}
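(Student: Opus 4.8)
The plan is to prove Lemma~\ref{lemma-same} by induction on $\abs{V(G)}$, using Theorem~\ref{thm-dvokaw} as a black box. Suppose $(G,P,L)$ is a counterexample with $\abs{V(G)}$ minimum, and begin with the standard preliminary reductions: $G$ is $2$-connected, its outer face is bounded by a chordless cycle~$C$ with $V(P)\subseteq V(C)$, $G$ has no separating cycle of length at most~$5$, and every vertex of $V(G)\setminus V(P)$ has degree at least~$3$. Each of these is routine. A cut vertex or a chord of~$C$ splits $G$ into two smaller instances sharing at most two vertices, which are coloured one after the other. A separating $5$-cycle~$D$ is removed by colouring (by the induction hypothesis) the subgraph of~$G$ drawn outside the open disk bounded by~$D$ and extending the resulting colouring of~$D$ inside by Theorem~\ref{thm-grotzsch}; condition~(i) is essential here, as it forces every vertex strictly inside~$D$ to have list $\{1,2,3\}$, so ordinary $3$-colouring suffices there. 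Finally, a vertex $v\notin V(P)$ of degree at most~$2$ is deleted; if $\abs{L(v)}=2$ and the deletion leaves no colour for~$v$, its two (non-adjacent, by triangle-freeness) neighbours are identified instead, which keeps girth at least~$5$ except possibly when a short cycle is created, in which case that cycle bounds a face and is treated separately.

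After these reductions the hypotheses of Theorem~\ref{thm-dvokaw} are met, and the lemma follows, unless its condition~(iii) or~(iv) fails, since conditions~(i) and~(ii) of that theorem are implied by ours. If~(iii) fails, $G$ has a path $q_1q_2q_3q_4q_5$ with $\abs{L(q_1)}=\abs{L(q_2)}=\abs{L(q_4)}=\abs{L(q_5)}=2$ and $\abs{L(q_3)}=3$. If~(iv) fails (so $\abs{V(P)}=3$; when $\abs{V(P)}\le 2$ only~(iii) can fail), then applying it to the endvertex~$p$ of~$P$ furnished by our condition~(iii)---which forbids a path $pv_1v_2$ with $\abs{L(v_1)}=\abs{L(v_2)}=2$---forces $G$ to contain a path $pq_2q_3q_4q_5$ with $\abs{L(q_2)}=\abs{L(q_4)}=\abs{L(q_5)}=2$ and $\abs{L(q_3)}=3$. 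Now the hypothesis that every size-$2$ list equals $\{1,2\}$ enters: together with girth at least~$5$ it implies that the size-$2$ vertices induce a matching~$M$, so $q_4q_5$ (and, in the first case, $q_1q_2$) is an edge of~$M$, the vertex~$q_3$ has list $\{1,2,3\}$ and joins this edge to~$q_2$, and the exhibited path is induced.

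From here I would carry out a local surgery around~$q_3$ that strictly decreases $\abs{V(G)}$ and preserves girth at least~$5$ and conditions~(i)--(iii): delete~$q_3$ together with, for each edge of~$M$ having a neighbour of~$q_3$ as an endvertex, one suitably chosen endvertex of it, and, where needed, identify two of the vertices that remain adjacent to the position of~$q_3$ (they lie at distance two in~$G$ with common neighbour~$q_3$, so no short cycle through them survives except possibly a face). The uniformity of the size-$2$ lists is used again: it guarantees that an identified vertex again receives a list of the form $\{1,2\}$ or $\{1,2,3\}$, restoring condition~(i), while condition~(ii), and with it~(iii), persists because deleting vertices creates no new $3$-vertex path of size-$2$ vertices (the identification case being checked directly). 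Applying the induction hypothesis to the reduced instance and then colouring back the removed vertices---an edge of~$M$ uses only colours~$1$ and~$2$, and, the graph being reduced, the residual constraints at~$q_3$ leave it a colour---produces an $L$-colouring of~$G$, a contradiction.

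The main obstacle is making the surgery rigorous. One has to verify in each configuration that the reduced graph still satisfies~(i)--(iii), that any $3$- or $4$-cycle produced by an identification is in fact a face of the reduced graph (which follows from $2$-connectivity and the absence of separating cycles of length at most~$5$, so it can be disposed of directly), and that the deleted vertices can actually be recoloured back; this requires a careful case analysis according to how the vertices $q_1,\dots,q_5$ (or $p,q_2,\dots,q_5$) sit relative to the outer cycle~$C$ and the precoloured path~$P$---in particular which of them lie on~$C$, which belong to~$P$, and whether~$q_3$ lies on~$C$ or in its interior---and this bookkeeping is where essentially all of the difficulty of the proof resides.
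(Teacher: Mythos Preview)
Your overall plan---reduce to a minimal counterexample and then invoke Theorem~\ref{thm-dvokaw}---matches the paper's, but you miss the simplifying observation that makes the final reduction clean, and in its place you propose a surgery that is more complicated than needed and that you yourself leave unspecified.

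The paper does not attack the failing $2$-$2$-$3$-$2$-$2$ configuration of Theorem~\ref{thm-dvokaw}(iii) directly. Instead it eliminates \emph{every} path $v_1v_2v_3$ with $\abs{L(v_1)}=\abs{L(v_3)}=2$ and $\abs{L(v_2)}=3$. Once no such $2$-$3$-$2$ pattern remains, conditions~(iii) and~(iv) of Theorem~\ref{thm-dvokaw} hold automatically (any offending path contains a $2$-$3$-$2$ subpath), and the theorem applies. The reduction for one $2$-$3$-$2$ path is simply: delete $v_2$, change the list of every remaining neighbour of $v_2$ to $\{1,2\}$, colour the smaller instance by induction, and give $v_2$ colour~$3$ at the end. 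This uses precisely the special hypothesis that all size-$2$ lists equal $\{1,2\}$: it is why $v_1,v_3$ already avoid~$3$ and why the new size-$2$ lists are again $\{1,2\}$. No identifications are performed, so girth at least~$5$ is preserved for free, and your separating-$5$-cycle and minimum-degree reductions become unnecessary.

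For this list-shrinking step to preserve hypothesis~(ii), one must know that the interior neighbours of $v_2$ have no neighbour with a size-$2$ list. The paper secures this with an additional preliminary cut that you omit: there is no path $xyz$ with $x,z\in V(K)$, $y\notin V(K)$, $x$ not the middle vertex of~$P$, and $\abs{L(z)}=2$; such a ``$2$-chord'' splits $G$ into two pieces and is dispatched by induction exactly like an ordinary chord (condition~(iii) for the cut-off piece holds because $\abs{L(z)}=2$ together with~(ii) forbids a bad path at~$z$). Your list of reductions (chordless outer cycle, no short separating cycle, minimum degree~$3$) does not provide this, so even if you had hit on the list-shrinking idea, your setup would not yet justify it. As written, your surgery with extra deletions and identifications is not obviously wrong, but the ``bookkeeping'' you defer is the entire content of the argument, and the paper's route sidesteps it altogether.
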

\begin{proof}
We prove the statement by induction, assuming that it holds for all graphs with fewer than~$\abs{V(G)}$ vertices.

We can assume that $G$ is $2$-connected, the cycle~$K$ bounding its outer face has
no chords except for those incident with the middle vertex of~$P$, and there is no
path~$xyz$ such that $x,z\in V(K)$, $y\not\in V(K)$, $x$ is not the middle vertex
of~$P$ and $\abs{L(z)}=2$ --- let us show the last assertion, the other ones
follow similarly.  If $G$ contains such a path, then $G=G_1\cup G_2$ for proper
induced subgraphs~$G_1$ and~$G_2$ with $xyz=G_1\cap G_2$ and $P\subseteq G_1$.  We
$L$-color~$G_1$ by the induction hypothesis, modify the lists of~$x$, $y$ and~$z$
to single-element lists given by this coloring, and extend the coloring to~$G_2$
by the induction hypothesis ($G_2$ satisfies~(iii), since a path~$zv_1v_2$ with
$\abs{L(v_1)}=\abs{L(v_2)}=2$ is forbidden by the assumption~(ii) for~$G$).

We exclude with a similar argument a chord incident with the middle vertex of~$P$:
let~$P=p_1p_2p_3$, where $G$ contains no path~$p_3v_1v_2$ with
$\abs{L(v_1)}=\abs{L(v_2)}=2$.  Write~$G=G_1\cup G_2$ for proper induced
subgraphs~$G_1$ and~$G_2$ intersecting in a chord~$p_2v$, such that $p_3\in
V(G_2)$.  By the induction hypothesis, $G_1$ is $L$-colorable (since it contains
only two vertices~$p_1$ and~$p_2$ with a list of size one).  We modify the list
of~$v$ to the singleton matching this $L$-coloring, and color~$G_2$ by the
induction hypothesis, thereby obtaining an $L$-coloring of~$G$.  Hence, we can
assume that $K$ is an induced cycle.

Next, suppose that $G$ contains a path~$v_1v_2v_3$ with
$\abs{L(v_1)}=\abs{L(v_3)}=2$ and $\abs{L(v_2)}=3$.  By the previous arguments,
$v_1v_2v_3$ is a subpath of~$K$, each neighbor~$u_2$ of~$v_2$ distinct from~$v_1$
and~$v_3$ has a list of size three, and every neighbor of~$u_2$ has a list of size
different from two. Define~$N$ to be the set of neighbors of~$v_2$ distinct
from~$v_1$ and~$v_3$.  Since $G$ has girth greater than $3$, $N$ is in independent
set.  Let~$L'$ be obtained from~$L$ by setting the list of each
vertex in~$N$ to~$\{1,2\}$.  By the induction hypothesis, $G-v_2$ is
$L'$-colorable, and we obtain an $L$-coloring of~$G$ by giving~$v_2$ color~$3$.

Hence, we can assume that $G$ does not contain any such path.  It follows that $G$
and $L$ satisfy the assumptions of Theorem~\ref{thm-dvokaw}, so $G$ is
$L$-colorable.
\end{proof}

We also need the following result on extendability of~$3$-colorings in plane graphs of
girth at least~$5$.

\begin{theorem}[Thomassen~\cite{thomassen-surf}]\label{thm-cycex}
Let~$G$ be a plane graph of girth at least~$5$ with outer face bounded by
a cycle~$K$ of length at most~$9$.  Let~$L$ be an assignment of lists of size one
to vertices of~$K$ yielding a proper coloring of~$K$, and of lists of size three to all
other vertices of~$G$.  If $G$ is not $L$-colorable, then either $\abs{K}\in
\{8,9\}$ and $K$ has a chord, or~$\abs{K}=9$ and a vertex of~$V(G)\setminus V(K)$
has three neighbors in~$K$.
\end{theorem}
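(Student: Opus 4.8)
I would prove the contrapositive by induction on $\abs{V(G)}$: among all plane graphs $G$ of girth at least~$5$ whose outer face is bounded by a cycle $K$ with $5\le\abs{K}\le 9$, equipped with a list assignment $L$ as in the statement, that are \emph{not} $L$-colorable and for which neither exceptional configuration occurs, I pick one minimizing $\abs{V(G)}$ (breaking ties by fewest edges), and then derive a contradiction by actually constructing an $L$-coloring. The first block of work is the routine connectivity, chord and separation reductions. If $G$ has a cut vertex we split there, color the pieces by induction (the piece containing $P$-worth of precolored vertices keeps its precoloring, the other gets a single-element list at the cut vertex from the first coloring) and glue; so $G$ is $2$-connected. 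The point is that the girth bound forces chords and multiply-attached interior vertices to land \emph{exactly} in the two listed exceptions: a chord $uv$ of $K$ splits $K$ into two arcs whose lengths sum to $\abs{K}$, giving two cycles of lengths summing to $\abs{K}+2$, each $\ge 5$, hence $\abs{K}\ge 8$ and $K$ has a chord --- an excluded case; so $K$ is induced. Likewise an interior vertex with three neighbors on $K$ yields three cycles of total length $\abs{K}+6\ge 15$, forcing $\abs{K}=9$ --- also excluded --- while four neighbors would force $\abs{K}\ge 12$, impossible. Thus in our minimal counterexample every vertex off $K$ has at most two neighbors on $K$.

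Next I remove short separating cycles. If $G$ has a separating cycle $C\neq K$ of length at most $7$, I color the subgraph of $G$ lying outside $C$ by induction (it is smaller, its outer boundary is still $K$, and it avoids both exceptions), read off the resulting coloring of $C$, and extend through the interior of $C$ by induction again --- legitimate because $\abs{C}\le 7$ rules out both exceptions for the inner graph. Separating cycles of length $8$ or $9$ are handled the same way, once one has first disposed (using $2$-connectedness and the previous step) of those whose interior contains a chord of $C$ or a shorter separating cycle, so that the inner graph is never in an exceptional case. After all this $G$ is $2$-connected, $K$ is a chordless cycle, no interior vertex has three or more neighbors on $K$, and $G$ has no short separating cycle; in particular every internal face has length at least $5$ and the ring of vertices adjacent to $K$ is tightly constrained.

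The heart of the proof is then a structural/discharging argument on this reduced $G$: put charge $\deg_G(v)-4$ on each vertex and $\ell(f)-4$ on each face, assign the outer face a suitable charge, and use Euler's formula (in the girth-$5$, precolored-outer-cycle setting) to make the total charge a fixed small negative constant; redistributing charge from the interior toward $K$ then forces a \emph{reducible configuration} near $K$ --- an interior vertex of degree at most $2$, or a degree-$3$ interior vertex with a sparse neighborhood, or a short subpath of $K$ whose interior neighborhood is sparse enough that Theorem~\ref{thm-3choos} (or the unconditional $3$-list-colorability of girth-$5$ plane graphs) applies after collapsing a $5$-face or shrinking a few lists to size two. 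Each such configuration lets me delete or identify a bounded piece, apply the induction hypothesis to the smaller graph, and extend, contradicting minimality. The main obstacle is exactly this last step: showing that forbidding the chord-of-an-$8$-or-$9$-cycle and triple-neighbor-of-a-$9$-cycle configurations genuinely guarantees a reducible configuration. This is where the bound $\abs{K}\le 9$ is used essentially, and in practice one is driven to prove a slightly stronger inductive statement --- allowing the precolored boundary to be a path as well as a cycle, and permitting a few extra precolored or size-$2$ vertices --- so that all the reductions above close up; I expect the bookkeeping for $\abs{K}=9$, where the triple-neighbor exception lurks, to be the most delicate part of the case analysis.
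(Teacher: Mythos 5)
This statement is quoted from Thomassen's paper \emph{The chromatic number of a graph of girth 5 on a fixed surface} and is not proved in the present article, so there is no internal proof to compare against; your proposal has to stand on its own, and as written it does not. The decisive objection is that the core of the argument is absent: you reduce to a $2$-connected graph with $K$ induced, no interior vertex having three neighbours on $K$, and no short separating cycle, and then assert that a discharging argument ``forces a reducible configuration near $K$'' --- but you give neither the discharging rules, nor the list of reducible configurations, nor the verification that every reduced graph contains one. You yourself flag this as ``the main obstacle'' and ``the most delicate part of the case analysis.'' That step is essentially the entire content of Thomassen's theorem (his published proof of it runs to many pages of case analysis, via a strengthened inductive statement rather than discharging), so deferring it means the theorem has not been proved. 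The elementary reductions you do carry out (a chord forces $\abs{K}\ge 8$, a tripod forces $\abs{K}=9$, splitting at cut vertices) are correct but are the easy part.

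There is also a concrete error in the separating-cycle reduction. Consider a separating $9$-cycle $C$ whose interior consists of a single vertex $v$ with three neighbours on $C$, the three resulting $5$-cycles all bounding faces. There is no chord of $C$ inside and no shorter separating cycle, so your preliminary ``disposal'' step does nothing; yet the inner graph is exactly the exceptional configuration of the theorem, so the inductive hypothesis does \emph{not} guarantee that the precolouring of $C$ read off from an arbitrary colouring of the exterior extends to $v$ (the three neighbours of $v$ may receive all three colours). The same difficulty arises for a separating or face-bounding $8$-cycle carrying an internal chord whose two sides are faces: the exterior colouring is computed without the chord and may colour its endpoints alike. In both cases one must control \emph{which} precolourings of $C$ are produced by the exterior, which requires a characterization of extendable precolourings of $(\le\!9)$-cycles --- precisely the nontrivial content you are trying to establish. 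So the induction does not close as described.
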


Let~$G$ be a plane graph, let~$P$ be a subpath of the boundary of the outer face of~$G$, and let~$X$ be a set of
edges contained in the boundary of the outer face of~$G$ forming a matching vertex-disjoint from~$P$.
Let~$Z$ be the set of vertices of~$G$ incident with~$P$ or an edge in~$X$.
Let~$G'$ be a plane graph such that $G$ is an induced subgraph of~$G'$,
$G'-V(G)$ is an induced cycle~$K$ of length~$|Z|$ bounding the outer face of~$G'$, and
the edges of~$G'$ between~$V(K)$ and~$V(G)$ form a perfect matching between~$V(K)$ and~$Z$.
For each~$z\in Z$, let~$k_z$ be the vertex of~$K$ matched to~$z$.
We say that $G'$ is a \emph{casing} for~$G$, $P$ and~$X$ if for all edges
$xy\in X\cup E(P)$, the vertices~$k_x$ and~$k_y$ are adjacent in~$K$
and the $4$-cycle $k_xxyk_y$ bounds a face of~$G'$.  Let~$p$ be any vertex of~$P$.
For two vertices~$x$ and~$y$ incident with
edges of~$X$, we write $x\prec y$ if $k_x$ precedes~$k_y$ in the clockwise
ordering of vertices of~$K$ starting with~$k_p$.

Let us remark that when $G$ is $2$-connected, its casing is uniquely determined
and the ordering $\prec$ matches the ordering of the vertices around the outer face
of~$G$; casings are just a technical device to enable us to keep track of the order
also when the boundary of the outer face of~$G$ is not a cycle.

We now give one more variation of Theorem~\ref{thm-dvokaw} (note the change
in~(iii), which now permits some paths~$v_1v_2v_3v_4v_5$ with
$\abs{L(v_1)}=\abs{L(v_2)}=\abs{L(v_4)}=\abs{L(v_5)}=2$, as well as the
modifications to~(i) and~(iv)).  In the situations of these theorems, we say that
an edge~$e=xy$ joining two vertices with lists of size two \emph{blocks}
a vertex~$p$ if there exists a path~$puvxy$ with $\abs{L(u)}=2$ and
$\abs{L(v)}=3$.

\begin{lemma}\label{lemma-dvokaw-strong}
Let~$G$ be a plane graph of girth at least~$5$, let~$P=p_1\dotso p_k$ be a subpath
of~$G$ drawn in the boundary of the outer face of~$G$ with $k\le 3$, and let~$L$
be an assignment of lists to vertices of~$G$, satisfying the following conditions.
\begin{itemize}
\item[\textrm{(i')}] All vertices not incident with the outer face have lists of
      size three, vertices incident with the outer face not belonging to~$V(P)$
      have lists of size two or three, and vertices of~$P$ have lists of size one
      giving a proper coloring of~$P$.  Furthermore, each edge of~$G$ that joins
      two vertices with list of size less than three is contained in the boundary
      of the outer face of~$G$.
\item[\textrm{(ii)}] The graph~$G$ has no path $v_1v_2v_3$ with
      $\abs{L(v_1)}=\abs{L(v_2)}=\abs{L(v_3)}=2$.
\item[\textrm{(iii')}] Let~$X$ be the set of edges of~$G$ joining vertices with
      a list of size two.  There exists a casing~$G'$ (with outer face $K$) for~$G$,
      $P$ and~$X$, such that the following holds for the ordering~$\prec$ defined
      by the casing.  If $v_1v_2$ and~$v_4v_5$ are distinct edges of~$X$ with
      $v_1\prec v_2\prec v_4\prec v_5$, then $v_2$ and~$v_4$ have no common
      neighbor, and $v_1$ and~$v_5$ have no common neighbor.
\item[\textrm{(iv')}] If $k=3$, then $G$ contains no
      path~$p_1v_2v_3$ with $\abs{L(v_2)}=\abs{L(v_3)}=2$.  Furthermore, every
      edge $xy\in X$ of~$G$ that blocks~$p_1$ such that $xp_3,yp_3\not\in E(G)$
      also blocks~$p_3$ and satisfies $L(p_2)\subseteq L(x)\cup L(y)$.
\end{itemize}
Then $G$ is $L$-colorable.
\end{lemma}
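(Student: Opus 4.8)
The plan is to proceed by induction on $|V(G)|$, reducing to Theorem~\ref{thm-dvokaw} whenever the extra paths permitted by~(iii') do not actually occur. First I would dispose of the usual connectivity and chord reductions: we may assume $G$ is $2$-connected (split at a cutvertex into $G_1$ containing $P$ and $G_2$, color $G_1$ by induction, propagate the singleton lists to the shared vertices, color $G_2$ by induction, checking that the hypotheses---especially~(ii) and the casing condition~(iii')---are inherited by the two pieces, which is where the "edges between size-$<3$ vertices lie on the outer face" clause of~(i') is used). Similarly, a chord $xy$ of the outer cycle $K$ (except one incident with the middle vertex of $P$, handled exactly as in Lemma~\ref{lemma-same}) lets us split $G$ along the $(\le5)$-cycle or shorter separating path bounded using the chord, color the inside, and recurse; here one invokes Theorem~\ref{thm-cycex} or Theorem~\ref{thm-grotzsch} to color small enclosed pieces. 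After these reductions we may assume the outer face is bounded by an induced cycle (apart from chords at the middle vertex of $P$), and that there is no "short path" $xuz$ with $x,z$ on $K$, $u\notin K$, $x$ not the middle vertex of $P$, and $|L(z)|=2$.

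Next I would push the list-$2$ set towards an independent-ish configuration by eliminating paths $v_1v_2v_3$ with $|L(v_1)|=|L(v_3)|=2$ and $|L(v_2)|=3$: such a $v_1v_2v_3$ must lie on $K$, every neighbor of $v_2$ other than $v_1,v_3$ must have list of size three with all \emph{their} neighbors of list size $\ne 2$ (using girth $\ge 5$ to get an independent neighborhood), so one sets all those neighbors' lists to $\{1,2\}$, colors $G-v_2$ by induction, and gives $v_2$ the color $3$---after checking (ii), (iii'), (iv') survive the change (the new size-$2$ vertices form an independent set, so no new forbidden $P_3$ appears, and any new $v_1v_2v_3v_4v_5$-type path would violate girth or the no-short-path condition). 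The point of all this is that, having done it, the \emph{only} obstruction to applying Theorem~\ref{thm-dvokaw} directly is condition~(iii) of that theorem: paths $v_1v_2v_3v_4v_5$ with $|L(v_1)|=|L(v_2)|=|L(v_4)|=|L(v_5)|=2$ and $|L(v_3)|=3$, i.e. exactly the pairs of edges of $X$ that our weaker~(iii') tolerates.

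So the heart of the argument is: given such a path $v_1v_2v_3v_4v_5$, reduce to a strictly smaller instance. The natural move is to color or contract $v_3$: since $|L(v_3)|=3$ and its two outer neighbors $v_2,v_4$ have lists of size $2$, we can pick $c\in L(v_3)$ avoiding nothing problematic and set $L(v_3)=\{c\}$, or better, identify $v_3$ with an appropriate vertex, turning $v_1v_2$ and $v_4v_5$ into a single longer run of list-$2$ vertices on the outer face that must then be split off as a separate $2$-connected block by the earlier reductions; one then colors the block containing $v_1\dots v_5$ using Theorem~\ref{thm-3choos} or Theorem~\ref{thm-cycex} (its only size-$2$ vertices lie on a short induced cycle and form an independent set once $v_3$ is handled), and recurses on the rest. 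The delicate bookkeeping---and what I expect to be \textbf{the main obstacle}---is verifying that condition~(iii') is genuinely preserved under this reduction: the casing $G'$ and the induced ordering $\prec$ must be updated consistently so that the "no common neighbor for $v_2,v_4$" and "no common neighbor for $v_1,v_5$" clauses, which are precisely what rule out the degenerate $4$- and $5$-cycles that would otherwise block the induction, continue to hold in both sub-instances; and one must check the refined $P$-endvertex conditions in~(iv'), including the requirement $L(p_2)\subseteq L(x)\cup L(y)$ for blocking edges, carry over---this is the reason~(iv') is stated with that extra colour-containment hypothesis rather than merely the path-avoidance hypothesis of Theorem~\ref{thm-dvokaw}(iv). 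Once the reduced instance is shown to satisfy all of~(i'),~(ii),~(iii'),~(iv'), the induction closes, and in the base case where no such $P_5$ exists we simply quote Theorem~\ref{thm-dvokaw}.
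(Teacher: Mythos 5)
There is a genuine gap at the step you yourself flag as ``the heart of the argument.'' You read~(iii') as a weakening that \emph{tolerates} certain paths $v_1v_2v_3v_4v_5$ with $\abs{L(v_1)}=\abs{L(v_2)}=\abs{L(v_4)}=\abs{L(v_5)}=2$ and $\abs{L(v_3)}=3$, and you propose to eliminate them by colouring or contracting $v_3$ and re-verifying all hypotheses on a smaller instance. That reduction is both unworkable as described (merging $v_1v_2$ and $v_4v_5$ into ``a single longer run of list-$2$ vertices'' immediately violates~(ii), and identifications risk creating short cycles) and unnecessary. The actual point of~(iii') is that, after the standard reductions, no such path exists: the vertex $v_3$ is a common neighbour of $v_2$ and $v_4$, so if the edges of $X$ are ordered $v_1\prec v_2\prec v_4\prec v_5$ (or $v_2\prec v_1\prec v_5\prec v_4$), condition~(iii') forbids the path outright; in the remaining ``crossed'' orderings, $v_3$ cannot lie on the outer cycle (that would force the consecutive ordering), so $v_2v_3v_4$ is a $2$-chord with an interior middle vertex whose far side contains no list-$2$ neighbour of $v_2$ (by~(ii)), and such $2$-chords are excluded by the splitting claim you already set up. In other words, hypothesis~(iii) of Theorem~\ref{thm-dvokaw} holds verbatim; there is nothing to reduce.

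A second, smaller omission: you never say what happens when hypothesis~(iv) of Theorem~\ref{thm-dvokaw} fails, which is the only role of the strengthened~(iv'). In that case one shows the blocking path $p_1u_1v_1xy$ lies on the outer cycle, the symmetric statement at $p_3$ forces the outer face to be a $7$- or $9$-cycle, and one finishes with Theorem~\ref{thm-cycex}; the clause $L(p_2)\subseteq L(x)\cup L(y)$ is used precisely to colour the exceptional $9$-cycle configuration with a vertex adjacent to $p_2$, $v_1$ and $v_3$. Your preliminary reductions (2-connectivity, chords, the interior $2$-path claim) do match the paper's, and your early elimination of paths $v_1v_2v_3$ with $\abs{L(v_2)}=3$ is imported from Lemma~\ref{lemma-same} but is neither needed here nor valid for arbitrary (non-$\{1,2\}$) lists.
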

\begin{proof}
      We prove the statement by induction on~$\abs{V(G)}$, assuming that it holds
      for all graphs with fewer than~$\abs{V(G)}$ vertices.  Clearly, we can
      assume that $G$ is connected.  Also we can assume that $k\ge 2$,
      as otherwise we can add to~$P$ another vertex incident with the outer face
      of~$G$.

Furthermore, we can assume that $G$ is $2$-connected and every chord of the cycle
bounding the outer face of~$G$ is incident with the middle vertex of~$P$:
otherwise, suppose for instance that the outer face of~$G$ has a chord~$xy$ with
neither~$x$ nor~$y$ being the middle vertex of~$P$, and write~$G=G_1\cup G_2$ for
induced subgraphs~$G_1$ and~$G_2$ intersecting in~$xy$ such that $P\subseteq G_1$.
By the induction hypothesis, the graph~$G_1$ has an $L$-coloring~$\varphi_1$ (let
us remark that a casing for~$G_1$, $P$ and~$X_1=X\cap E(G_1)$ postulated by the
assumption~(iii') can be obtained from~$G'$ by removing the vertices
of~$G_2-\{x,y\}$, possibly removing edges between~$x$ or~$y$ and~$K$ if $x$ or~$y$
is not incident with an edge in~$E(P)\cup X_1$, and suppressing vertices of
degree two in~$K$).  Let~$L_2$ be the list assignment obtained from~$L$ by
giving~$x$ and~$y$ singleton lists prescribed by~$\varphi_1$, and find an
$L_2$-coloring of~$G_2$ by the induction hypothesis (letting~$X_2$ be the set of
edges of~$G_2$ joining vertices with list of size two according to~$L_2$, a casing
for~$G_2$, $P_2=xy$ and~$X_2$ can be constructed from~$G'$ by removing the
vertices of~$G_1-\{x,y\}$ and the edges between~$V(G_1)$ and~$V(K)$ not incident
with the edges o~ $X_2$, adding edges~$xk_{p_1}$ and~$yk_{p_2}$, and suppressing
vertices of degree two in~$K$).  This yields an $L$-coloring of~$G$.

A similar argument shows that we can assume the following.
\claim{cl-2chord-spec}{There is no path~$Q=q_1q_2q_3$ of length two with~$q_1$
      and~$q_3$ incident with the outer face of~$G$ and not equal to the middle
      vertex of~$P$, and $q_2$ not incident with the outer face, such that writing
      $G=G_1\cup G_2$ for induced subgraphs~$G_1$ and~$G_2$ with intersection~$Q$
and $P\subseteq G_1$, no neighbor of~$q_1$ in~$G_2$ has a list of size two.}

This implies that $G$ and~$L$ satisfy the assumption~(iii) of
Theorem~\ref{thm-dvokaw}.  Indeed, suppose that $G$ contains a path
$v_1v_2v_3v_4v_5$ with $\abs{L(v_1)}=\abs{L(v_2)}=\abs{L(v_4)}=\abs{L(v_5)}=2$ and
$\abs{L(v_3)}=3$.  By the assumption~(iii') and symmetry, we can assume that
$v_1\prec v_2\prec v_5\prec v_4$.  Since all chords of the outer face are incident
with the middle vertex of $P$, it follows that $v_3$ is not incident with the
outer face.  Let~$G_1$ and~$G_2$ be proper induced subgraphs of~$G$ such that
$G_1\cup G_2=G$, $G_1\cap G_2=v_2v_3v_4$, and $P\subseteq G_1$.  Note that $v_1\in
V(G_1)\setminus V(G_2)$, and by the assumption~(ii) for~$G$, we conclude that
$v_2$ has no neighbor with a list of size two in~$G_2$.  Then the path $v_2v_3v_4$
contradicts~\refclaim{cl-2chord-spec} (with~$q_i=v_{i+1}$ for~$i\in\{1,2,3\}$).

If $G$ and~$L$ satisfy the assumption~(iv) of Theorem~\ref{thm-dvokaw}, it follows
from that theorem that $G$ is $L$-colorable.  Hence, suppose this is not the case.
Thus (iv') implies that $P=p_1p_2p_3$ and $G$ contains an edge~$xy$ joining
vertices with lists of size two that blocks~$p_1$.  Furthermore, (iv') also
implies that either $p_3$ has a neighbor in~$\{x,y\}$ or the edge~$xy$
blocks~$p_3$.  Let~$p_1u_1v_1xy$ with $\abs{L(u_1)}=2$ and $\abs{L(v_1)}=3$ be
a path showing that $xy$ blocks~$p_1$.  Note that $u_1$ has no neighbor with
a list of size two, since we showed in the previous paragraph that $G$ satisfies
the assumption~(iii) of Theorem~\ref{thm-dvokaw}.  By~\refclaim{cl-2chord-spec}
and the absence of chords not incident with~$p_2$, we conclude that $p_1u_1v_1xy$
is contained in the boundary of the outer face of~$G$.  By a symmetric argument
at~$p_3$, we conclude that the outer face of~$G$ is bounded by either
a $7$-cycle~$p_1u_1v_1xyp_3p_2$ or a $9$-cycle~$p_1u_1v_1xyv_3u_3p_3p_2$ with
$\abs{L(u_3)}=2$ and $\abs{L(v_3)}=3$.  By Theorem~\ref{thm-cycex}, we conclude
that $G$ is $L$-colorable, unless its outer face is bounded by a $9$-cycle and $G$
contains a vertex~$z$ adjacent to~$p_2$, $v_1$, and~$v_3$.  However, in that case
$G$ is $L$-colorable as well, since $L(p_2)\subseteq L(x)\cup L(y)$ by the
assumption~(iv').
\end{proof}

Finally, we consider distance colorability of planar triangle-free graphs.  The
\emph{Clebsch graph} is the graph with vertex set equal to the elements of the
finite field~GF(16) and edges joining two elements if their difference is
a perfect cube.
\begin{theorem}[Naserasr~\cite{homclebsch}]\label{thm-clebsch}
Every planar triangle-free graph has a homomorphism to the Clebsch graph.
\end{theorem}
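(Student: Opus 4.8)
The plan is to prove this the way one establishes Gr\"otzsch-type theorems: by a discharging argument on a minimal counterexample, exploiting the rich symmetry of the Clebsch graph~$\mathcal{C}$. Recall that $\mathcal{C}$ is strongly regular with parameters~$(16,5,0,2)$ --- every edge lies in no triangle, and every non-edge extends to exactly two triangles --- and is distance- and arc-transitive; equivalently it is the Cayley graph of~$\mathbb{F}_2^4$ with connection set~$\{e_1,e_2,e_3,e_4,e_1+e_2+e_3+e_4\}$. Suppose $G$ is a triangle-free planar graph with no homomorphism to~$\mathcal{C}$, chosen with the fewest vertices and then the fewest edges. The first phase is to pin down the structure of~$G$: using that a homomorphism of a smaller triangle-free planar graph can be extended whenever one or two vertices of~$\mathcal{C}$ with a prescribed (non-)adjacency pattern have a common neighbour, one shows that $G$ is $2$-connected, has minimum degree at least~$3$ (a degree-$2$ vertex~$v$ with neighbours~$u,w$ is deleted and $u,w$ identified when this preserves triangle-freeness and planarity, and otherwise the $5$-cycle through~$u,v,w$ is facial or separating and is dispatched by a smaller instance, exactly as the $5$-cycles are manipulated in Section~\ref{sec:eq-exp}), and contains no short separating cycle.

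The second phase is discharging. Give each vertex~$x$ charge~$\deg(x)-4$ and each face~$f$ charge~$\ell(f)-4$, so that the total is~$-8$ by Euler's formula. Since $G$ is triangle-free every face has length at least~$4$, only vertices of degree~$2$ and~$3$ are in deficit, and the sparsity of triangle-free planar graphs forces, after a suitable redistribution of charge (of the kind worked out by Borodin and others for this graph class), the presence of one of a short list of local configurations --- a low-degree vertex surrounded by $4$- and $5$-faces. Each such configuration must then be shown \emph{reducible}: deleting or contracting it yields a smaller triangle-free planar graph, which by minimality admits a homomorphism~$\varphi'$ to~$\mathcal{C}$, and $\varphi'$ is extended back over~$G$ using the parameters~$(16,5,0,2)$ together with edge- and arc-transitivity to locate the required common neighbours or $C_4$/$C_5$-closing vertices in~$\mathcal{C}$. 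I expect the main obstacle to be this interplay: the unavoidable set must be small enough that each member succumbs to a short lemma about~$\mathcal{C}$, yet large enough to be genuinely unavoidable --- the familiar tension in tight discharging arguments for triangle-free planar graphs.

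A tempting alternative, since the input is planar, is to dualize: a homomorphism $G\to\mathcal{C}$ is the same as a nowhere-zero flow of the dual~$G^*$ valued in the $5$-element connection set above, and ``$G$ triangle-free'' becomes ``$G^*$ has no odd edge-cut of size three''; one would then prove existence of such a flow by induction on~$G^*$. This reshuffles the difficulty into essentially the same deletion/contraction reductions, so I would still anticipate the discharging and reducibility analysis to be the heart of the matter. In any event, this is a theorem of Naserasr~\cite{homclebsch}, which we use here as a black box.
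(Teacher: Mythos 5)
The paper does not prove Theorem~\ref{thm-clebsch} at all: it is imported verbatim from Naserasr~\cite{homclebsch} and used only as a black box (to derive Corollary~\ref{cor-dist13}), so your closing sentence is exactly the paper's approach and is all that is needed here. Be aware, though, that the discharging plan occupying most of your write-up is speculative rather than a proof --- no unavoidable set is exhibited and no reducible configuration is verified --- and it is also not how Naserasr actually argues: his proof goes through the planar duality between homomorphisms to projective cubes and edge-colourings of the dual multigraph, reducing the statement to Seymour's edge-colouring theorem for planar multigraphs, which is closer in spirit to the ``tempting alternative'' you mention at the end than to a Gr\"otzsch-style discharging argument.
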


\noindent
Since the Clebsch graph is triangle-free, Theorem~\ref{thm-clebsch} has the
following consequence, also noted by Naserasr~\cite{homclebsch}.
\begin{corollary}\label{cor-dist13}
Every planar triangle-free graph has a proper coloring by~$16$ colors such that
any two vertices joined by a path of length~$3$ have different colors.
\end{corollary}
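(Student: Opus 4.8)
The plan is to use the homomorphism $h\colon G\to \GG$ furnished by Theorem~\ref{thm-clebsch}, where $\GG$ denotes the Clebsch graph, directly as the colouring: assign to each vertex $u$ of $G$ the colour $h(u)\in V(\GG)$. Since $V(\GG)$ has exactly $16$ elements, this is a colouring by $16$ colours. First I would check that it is proper: if $uv\in E(G)$, then $h(u)h(v)\in E(\GG)$, and since $\GG$ is a loopless simple graph this forces $h(u)\neq h(v)$.

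Next, suppose $u$ and $v$ are joined in $G$ by a path $u=x_0x_1x_2x_3=v$ of length~$3$. Applying $h$ along the edges of this path, $h(x_0)h(x_1)h(x_2)h(x_3)$ is a walk of length~$3$ in~$\GG$, and I claim that necessarily $h(u)\neq h(v)$. Indeed, suppose $h(x_0)=h(x_3)$. Each consecutive pair among $h(x_0),h(x_1),h(x_2),h(x_3)$ is an edge of~$\GG$, hence a pair of distinct vertices; in particular $h(x_0)\neq h(x_1)$, $h(x_1)\neq h(x_2)$, and $h(x_2)\neq h(x_3)=h(x_0)$, so $h(x_0),h(x_1),h(x_2)$ are pairwise distinct. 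But then $h(x_0)h(x_1)h(x_2)$ is a triangle in~$\GG$, contradicting the fact (already noted in the excerpt) that the Clebsch graph is triangle-free. Hence $h(u)\neq h(v)$, and the colouring separates the endpoints of every path of length~$3$, as required.

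The only content beyond Theorem~\ref{thm-clebsch} is the elementary observation that a homomorphism into a loopless triangle-free graph cannot map the two ends of a length-$3$ walk to the same vertex, since that would create a closed walk of length~$3$ — necessarily a triangle — in the target. There is essentially no obstacle here; the single point worth stating with care is why the three images $h(x_0),h(x_1),h(x_2)$ are genuinely distinct (so that they form a triangle rather than a shorter closed walk), which is exactly where looplessness of~$\GG$ is used. Equivalently, one could argue that $\GG$ has odd girth~$5$ and hence admits no closed walk of odd length below~$5$, in particular none of length~$3$.
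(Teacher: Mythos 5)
Your proof is correct and is exactly the argument the paper intends: use the Clebsch homomorphism itself as the $16$-coloring and note that equal colors at the ends of a length-$3$ path would force a triangle in the (triangle-free) Clebsch graph. The careful check that the three intermediate images are pairwise distinct is the right point to make explicit.
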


\section{Requests at a vertex}\label{sec:atvert} 
In this section, we consider the case of a request graph with only non-equality
requests and all requests adjacent to one vertex~$v$.  Let~$T$ be the set of
vertices other than~$v$ adjacent to the requests and let~$S$ be the set of
non-request neighbors of~$v$.  We can without loss of generality assign to~$v$
color~$3$, and thus we equivalently ask for all vertices of~$S$ as well as
a constant fraction of the vertices of~$T$ to be colored from the list~$\{1,2\}$.
After removing~$v$ and the request vertices, the vertices of~$S\cup T$ will be
incident with a single face of the graph, say the outer one.  If the request graph
had girth at least~$5$ and $S=\varnothing$, we could satisfy all requests in any
independent subset of~$T$ using Theorem~\ref{thm-3choos}, and this would allow us
to satisfy at least $1/3$-fraction of all the requests.  However, the graphs is
only assumed to be triangle-free, and thus a more involved argument is needed.

Let us introduce a definition motivated by the situation described in the previous
paragraph.  Let~$G$ be a graph, let~$S$ and $T$ be disjoint subsets of its
vertices, let~$P$ be a path in~$G$ disjoint from~$S\cup T$, and let~$w\colon T\to
\QQ^+$ be an assignment of positive weights to the vertices in~$T$.  If $S$ is an
independent set in~$G$, we say that $C=(G,P,S,T,w)$ is a \emph{cog}, and the
elements of~$T$ are its \emph{demands}.  A \emph{$3$-coloring} of the cog is
a $3$-coloring~$\varphi$ of~$G$ such that $\varphi(v)\in\{1,2\}$ for all~$v\in S$.
For a real number~$\alpha$, we say that $\varphi$ \emph{satisfies
$\alpha$-fraction of demands} if $w(\varphi^{-1}(\{1,2\})\cap T)\ge \alpha w(T)$.
We say that the cog is \emph{plane} if $G$ is a plane graph, $P$ is a subpath of
the boundary of the outer face of~$G$, and $S$ and~$T$ consist only of vertices
incident with the outer face of~$G$.  The \emph{girth} of the cog is defined as
the length of the shortest cycle in~$G$.

In all forthcoming figures, vertices of~$P$ are depicted by filled circles,
vertices of~$S$ are depicted by squares, vertices of~$T$ are depicted by squares
containing a question mark, and all other vertices are depicted by empty circles.

Let~$C=(G,P,S,T,w)$ be a plane cog and let~$Q$ be an induced path in~$G$ such that
the ends of~$Q$ are incident with the outer face and no other vertex or edge
of~$Q$ is incident with the outer face.  Then $G=G_1\cup G_2$ for proper induced
subgraphs $G_1$ and $G_2$ with intersection~$Q$.  Suppose that $P\subseteq G_1$,
and define $C_1=(G_1,P,S\cap V(G_1),T\cap V(G_1), w\restriction (T\cap V(G_1)))$,
and $C_2=(G_2,Q,S\cap V(G_2)\setminus V(Q),T\cap V(G_2)\setminus V(Q),
w\restriction (T\cap V(G_2)\setminus V(Q)))$.  We say that $C_1$ and $C_2$ are the
\emph{$Q$-components of~$C$}, and that $C_2$ is \emph{cut off} by~$Q$.  If $Q$ has
length~$2$ and one of its ends belongs to~$S\cup T$, we say that $Q$ is
a \emph{weak $2$-chord}.  A cog $C'=(G',P',S',T',w')$ is
a \emph{subcog} of~$C$ if $G'\subseteq G$, $P'=P\cap G'$, $S'\subseteq S\cap
V(G')$, $T'\subseteq T\cap V(G')$, and $w'$ is the restriction of~$w$ to~$T'$.

We observe that Theorem~\ref{thm-3choos} implies that if $C=(G,P,S,T,w)$ is
a plane cog of girth at least~$5$ with $\abs{V(P)}\le 3$, then every $3$-coloring
of~$P$ extends to a $3$-coloring of the cog.  In Lemma~\ref{lemma-full}, we extend
this to show that when $\abs{V(P)}=2$ (and with a few exceptions), such
a $3$-coloring can satisfy a constant fraction of the demands, even if the cog has
girth $4$.  This directly implies the result for request graphs with only
non-equality requests at a single vertex, Corollary~\ref{cor-single}.

A plane cog~$(G,P,S,T,w)$ is \emph{polished} if $T$ is an independent set and $G$
does not contain a path~$v_1v_2v_3$ with $v_1,v_3\in T$ and~$v_2\in S$.
Let us first deal with the special case of satisfying demands in polished cogs
of girth at least five.

\begin{figure}
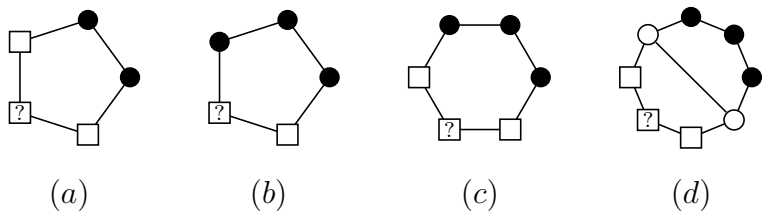

      \begin{center}
      \begin{tikzgraph}
            \draw[edge] (0:\R) node[p]{} -- (72:\R) node[p]{} -- (144:\R) node[s]{} -- (216:\R) node[s]{?} -- (288:\R) node[s]{} -- cycle (270:\R) node[below=\R/2]{{\normalsize$(a)$}};
\end{tikzgraph}
\hspace{1.5em}
      \begin{tikzgraph}
            \draw[edge] (0:\R) node[p]{} -- (72:\R) node[p]{} -- (144:\R) node[p]{} -- (216:\R) node[s]{?} -- (288:\R) node[s]{} -- cycle (270:\R) node[below=\R/2]{{\normalsize$(b)$}};
\end{tikzgraph}
\hspace{1.5em}
      \begin{tikzgraph}
      \draw[edge] (0:\R) node[p]{} -- (60:\R) node[p]{} -- (120:\R) node[p]{} -- (180:\R) node[s]{} -- (240:\R) node[s]{?}
      -- (300:\R) node[s]{}  -- cycle (270:\R) node[below=\R/2]{{\normalsize$(c)$}};
\end{tikzgraph}
\hspace{1.5em}
      \begin{tikzgraph}
            \draw[edge] (0:\R) node[p]{} -- (45:\R) node[p]{} -- (90:\R) node[p]{} -- (135:\R) node[vertex](u){} -- (180:\R) node[s]{} -- (225:\R) node[s]{?} -- (270:\R) node[s]{} -- (315:\R) node[vertex](v){} -- cycle (270:\R) node[below=\R/2]{{\normalsize$(d)$}}; \draw[edge] (u)--(v);
\end{tikzgraph}
\end{center}
\caption{Obstructing cogs.}\label{fig-obstr}
\end{figure}

\begin{lemma}\label{lemma-polished}
Let~$\alpha_1=1/562$.  Let~$C=(G,P,S,T,w)$ be a polished plane cog of girth at least~$5$, where $\abs{V(P)}\le 3$.
Let~$\psi$ be a $3$-coloring of~$P$.
If $C$ does not contain any of the subcogs depicted in Figure~\ref{fig-obstr}, then $\psi$ extends to a $3$-coloring of~$C$
satisfying $\alpha_1$-fraction of the demands.
\end{lemma}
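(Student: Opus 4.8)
The plan is to prove Lemma~\ref{lemma-polished} by induction on $|V(G)|$, combining a reduction phase (which cleans up the cog) with a weighting argument that pays for the demands satisfied. First I would reduce to the case where $G$ is $2$-connected, the outer face is bounded by a cycle $K$, and $K$ has no chords except those incident with the middle vertex of $P$: whenever there is a cutvertex, a chord, or more generally an induced path $Q$ as in the definition of $Q$-components that separates a nontrivial piece carrying no demands, I would color the side containing $P$ first by induction and propagate. I would also want to cut off, by means of weak $2$-chords and short separating paths, any part of $G$ that contains a bounded-size configuration where we can already guarantee a linear fraction of its demands; the point is that in the leftover graph every demand vertex of $T$ is "isolated enough" that Theorem~\ref{thm-3choos} (or rather its strengthening Lemma~\ref{lemma-same}) applies. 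Here the hypothesis that $C$ is polished — $T$ independent, and no path $v_1 v_2 v_3$ with $v_1,v_3 \in T$, $v_2 \in S$ — is exactly what is needed so that, after setting the lists of the chosen demand vertices to $\{1,2\}$ and the lists of $S$ to $\{1,2\}$, the vertices with a list of size two still form a set with no path on three such vertices (condition~(ii) of Lemma~\ref{lemma-same}); condition~(iii) there is the reason $|V(P)|=3$ needs the extra care about which endvertex is used.

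The core of the argument is a discharging-type estimate. Having reduced to the structured case, I would select a subset $T' \subseteq T$ of demand vertices that are pairwise "far" along the outer cycle $K$ (say at distance at least some small constant in $K$), assign each vertex of $T'$ and each vertex of $S$ the list $\{1,2\}$, every other outer vertex a list of size two or three as inherited, and every internal vertex the list $\{1,2,3\}$, and check that the resulting instance meets the hypotheses of Lemma~\ref{lemma-same}; this gives a $3$-coloring extending $\psi$ in which all of $T'$ is colored from $\{1,2\}$, i.e.\ all demands in $T'$ are satisfied. The constant $\alpha_1 = 1/562$ then comes from bounding $w(T')$ from below by $w(T)$ times a fixed fraction: on the outer cycle there are at most boundedly many vertices of $T$ within a bounded window of any chosen vertex, so a greedy selection keeps at least a constant fraction of the weight. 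The obstructing cogs of Figure~\ref{fig-obstr} are precisely the small configurations where the reduction would get stuck — a $5$-cycle with $P$ occupying two or three vertices and the remaining vertices in $S\cup T$ packed together so that no far-apart selection is possible and no separating path peels anything off — and excluding them is what makes the counting clean.

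The step I expect to be the main obstacle is making the reduction phase genuinely exhaustive: one has to verify that, in a minimal counterexample that is $2$-connected with an induced outer cycle $K$ having only $p_2$-chords, if none of the four obstructing cogs occurs then there really is a valid far-apart selection $T'$ \emph{and} the modified list assignment satisfies all three hypotheses of Lemma~\ref{lemma-same}. The delicate point is condition~(iii) of that lemma when $|V(P)|=3$: one must be able to pick an endvertex $p$ of $P$ so that $G$ has no path $p v_1 v_2$ with $|L(v_1)| = |L(v_2)| = 2$; if both endvertices fail this, the configuration forced near $P$ is small and — using that $C$ is polished and has girth at least $5$ and contains none of Figure~\ref{fig-obstr} — can be shown to be one of the excluded cases or to admit a further separating path that was supposed to have been reduced away. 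I would organize this as a short sequence of claims, each of the form "we may assume $G$ does not contain \dots," ending with the application of Lemma~\ref{lemma-same} to the cleaned-up cog, and then the arithmetic giving $1/562$.
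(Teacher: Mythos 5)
Your reduction phase (minimal counterexample, $2$-connectedness, outer cycle $K$, control of chords and weak $2$-chords, excluding the cogs of Figure~\ref{fig-obstr} as the stuck cases) matches the first half of the paper's proof. But the core of your plan --- pick a subset $T'\subseteq T$ that is sparse along $K$, give every vertex of $S\cup T'$ the list $\{1,2\}$, and apply Lemma~\ref{lemma-same} --- has a genuine gap that no choice of $T'$ can repair. A polished cog forbids paths $v_1v_2v_3$ with $v_1,v_3\in T$ and $v_2\in S$, but it does \emph{not} forbid a single demand vertex $t\in T$ having two neighbors in $S$. For such a $t$, putting $t$ into $T'$ creates a path of three vertices all with lists of size two ($S$--$T'$--$S$), violating hypothesis~(ii) of Lemma~\ref{lemma-same}; and $t$ cannot simply be discarded, since \emph{all} demands might be of this type, so you would satisfy a zero fraction. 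This is a local obstruction at one demand, so no ``far-apart'' selection along $K$ helps. The paper's Section~\ref{sec:intro} explicitly flags this as the central difficulty (``no such variation is known or even likely to hold''), and the actual proof spends most of its length on exactly these vertices: it splits $T$ into $T_1$ (degree-one components of $G[S\cup T]$, handled essentially as you propose, via Lemma~\ref{lemma-same}), the peripheral vertices $T_p$, and the two-$S$-neighbor vertices $T_0$; for $T_0$ it contracts each triple $B_t$, uses Naserasr's Clebsch-graph result (Corollary~\ref{cor-dist13}) three times to make the surviving demands pairwise non-adjacent at distance three \emph{in the graph metric}, builds a casing to track the cyclic order, and then invokes the genuinely stronger Lemma~\ref{lemma-dvokaw-strong}, which (unlike Lemma~\ref{lemma-same}) tolerates certain edges between list-two vertices.

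A secondary issue: even where Lemma~\ref{lemma-same} does apply, ``distance along $K$'' is not the right separation notion. Hypotheses~(ii)--(iii) concern paths in $G$, which may run through the interior, so a greedy selection along the outer cycle does not control them; the paper obtains the needed distance-$3$ separation from the proper $16$-coloring of Corollary~\ref{cor-dist13}, and the three factors of $16$ (plus the factors $2$, $48$, $512$ from the three parts of $T$) are where $\alpha_1=1/562$ actually comes from --- your proposed accounting would not produce this constant. In short, the skeleton is right but the decisive ideas (the $T_0$ contraction, the Clebsch-graph separation, and the new list-coloring Lemma~\ref{lemma-dvokaw-strong}) are missing, and without them the plan fails on the very configurations that make the lemma hard.
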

\begin{proof}
Suppose on the contrary that $C$ and~$\psi$ form a counterexample
with~$\abs{V(G)}$ as small as possible.  Clearly, $G$ is connected and vertices
not belonging to~$S\cup T\cup V(P)$ have degree at least three.  

Also, $G$ is $2$-connected: otherwise, let~$v$ be a cutvertex of~$G$.  If $v$ is
not the middle vertex of~$P$, then let~$C_1$ and $C_2$ be the $v$-components
of~$C$.  Note that neither $C_1$ nor~$C_2$ contains a subcog depicted in
Figure~\ref{fig-obstr}. By the minimality of~$C$, the precoloring~$\psi$ extends
to a $3$-coloring~$\varphi_1$ of~$C_1$ satisfying $\alpha_1$-fraction of its
demands.  Furthermore, the $3$-coloring of~$v$ by color~$\varphi_1(v)$ extends to
a $3$-coloring~$\varphi_2$ of~$C_2$ satisfying $\alpha_1$-fraction of its demands.
The combination of~$\varphi_1$ and $\varphi_2$ is a $3$-coloring of~$C$ satisfying
$\alpha_1$-fraction of its demands, which contradicts the assumption that
$(C,\psi)$ is a counterexample.  A similar argument excludes the case that $v$ is
the middle vertex of~$P$ and thus $G$ contains no cutvertices.  In particular, the
outer face of~$G$ is bounded by a cycle~$K$.  Similarly, Theorem~\ref{thm-cycex}
implies the following.
\claim{cl-nosep}{Every cycle in~$G$ of length at most~$7$ bounds a face,
and the open disk bounded by any $8$-cycle in~$G$ contains no vertices.}

Suppose that $K$ has a chord~$uv$. Let us first consider the case that neither~$u$
nor~$v$ is the middle vertex of~$P$.  Let~$C_1$ and $C_2$ be the
$uv$-components of~$C$, and let~$G_2$ be the graph of~$C_2$.  Note that $C_1$ does
not contain a subcog depicted in Figure~\ref{fig-obstr}, so the induction
hypothesis ensures that $\psi$ extends to a $3$-coloring of~$C_1$. Considering now
$C_2$ with~$u$ and~$v$ precolored as prescribed by this extension, we deduce that
that $C_2$ must contain the subcog depicted in Figure~\ref{fig-obstr}(a)---if
$C_2$ did not contain such a subcog, we obtain a contradiction as in the previous
paragraph, since $C_2$ has only two precolored vertices.  Hence,
$G_2$ contains a path $ux_1x_2x_3v$ with $x_1,x_3\in S$ and $x_2\in T$.  Since $C$
is polished, $u,v\not\in S\cup T$.  We obtain the following.
\claim{cl-chord1}{The cycle~$K$ has no chord with an end in~$S\cup T$, unless the
      other end of the chord is the middle vertex of~$P$.}

In particular, the edges $ux_1$, $x_1x_2$, $x_2x_3$, and~$x_3v$ are not chords,
and since every $5$-cycle in~$G$ bounds a face by~\refclaim{cl-nosep}, we conclude
that $G_2$ is equal to the $5$-cycle~$vux_1x_2x_3$.
\claim{cl-chord2}{If $uv$ is a chord of the cycle~$K$ not incident with the middle
      vertex of~$P$, then the $uv$-component of~$C$ cut off by~$uv$ is the cog
      depicted in Figure~\ref{fig-obstr}(a).}

\refclaim{cl-chord1} implies that each vertex of~$T$ is
incident with at most two vertices of~$S$ (consecutive to it in~$K$). Since $C$ is polished, each
component of~$G[S\cup T]$ is a path of length at most two contained in~$K$, and if
its length is two, then its middle vertex belongs to~$T$.  We next show the following.
\claim{cl-2chord}{Suppose that $Q=uvz$ is a weak $2$-chord of~$C$, where $z\in
      S\cup T$ and $u$ is not the middle vertex of~$P$.  Then the
      $Q$-component~$C'$ of~$C$ cut off by~$Q$ is equal to the cog depicted in
      Figure~\ref{fig-obstr}(b), and since $C$ is polished, it follows that
      $u\not\in S\cup T$ and $z\in S$.}

Suppose for a contradiction that this is not the case, and let~$Q=uvz$ be a weak
$2$-chord satisfying the assumptions that fails the conclusion
of~\refclaim{cl-2chord} with~$C'$ minimal.  As before, we argue that $C'$ contains
a subcog $C''$ depicted in Figure~\ref{fig-obstr}.  If $C''$ is the subcog from
Figure~\ref{fig-obstr}(a), then since $C$ is polished, $C''$ contains the
edge~$uv$ (and not~$vz$).  Let~$u'\in S$ be the neighbor of~$v$ in~$C''$ distinct
from~$u$.  However, then the cut-off~$u'vz$-component of~$C$ contradicts the
minimality of~$C'$ (it cannot be equal to the cog depicted in
Figure~\ref{fig-obstr}(b) since $C$ is polished and $u',z\in S\cup T$).
Similarly, as $C$ is polished, $C''$ is not the cog depicted in
Figure~\ref{fig-obstr}(c).  If $C''$ is the cog depicted in
Figure~\ref{fig-obstr}(b), then \refclaim{cl-nosep} and~\refclaim{cl-chord1} yield
that $C'=C''$, which contradicts the definition of~$Q$.

\begin{figure}
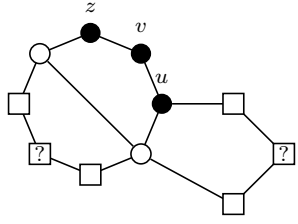

      \begin{center}
      \begin{tikzgraph}
            \draw[edge] (0:\RR) node[p,label=above:$u$](x){} -- (45:\RR) node[p,label=above:$v$]{} -- (90:\RR) node[p,label=above:$z$]{} -- (135:\RR) node[vertex](u){} -- (180:\RR) node[s]{} -- (225:\RR) node[s]{?} -- (270:\RR) node[s]{} -- (315:\RR) node[vertex](v){} -- cycle ; \draw[edge] (u)--(v); \draw[edge] (x) -- ++(0:\RR) node[s]{} -- ++(-45:\RR) node[s]{?} -- ++(225:\RR) node[s]{} -- (v);
\end{tikzgraph}
\end{center}
\caption{A cog split off by a weak $2$-chord.}\label{fig-combincog}
\end{figure}
Finally, suppose that $C''$ is the cog depicted in Figure~\ref{fig-obstr}(d).  As
$C$ is polished, the minimality of~$C'$ along with~\refclaim{cl-nosep}
and~\refclaim{cl-chord2} imply that either $C'=C''$ or $C'$ is the cog depicted in
Figure~\ref{fig-combincog}.  Let~$\beta$ be the weight of the unique demand
of~$C''$.  Let~$C_1=(G_1,P,S_1,T_1,w_1)$ be the $Q$-component of~$C$ distinct
from~$C'$.  If $z\in S$, then let~$C'_1=C_1$; otherwise (when $z\in T$),
let~$C'_1$ be obtained from~$C_1$ by increasing the weight of~$z$ by~$\beta$.  By
the minimality of~$C$, any $3$-coloring of~$P$ extends to a $3$-coloring~$\varphi$
of~$C'_1$ satisfying $\alpha_1$-fraction of its demands.  If $\varphi(u)\neq 3$,
then we can color the neighbor of~$u$ in~$C'$ with a list of size three by
color~$3$ and extend the coloring so that all demands in~$C'$ are satisfied, and
the resulting $3$-coloring satisfies $\alpha_1$-fraction of demands
of~$C$.  Hence, suppose that $\varphi(u)=3$.  If $z\in T$ and $\varphi(z)=3$ (so
that the demand of~$z$ is not satisfied), then we extend $\varphi$ to~$C''$
without satisfying its unique demand; otherwise $\varphi(z)\in \{1,2\}$, and we
observe that $\varphi$ can be extended to a $3$-coloring of~$C''$ satisfying its
demand.  In either case, if $C'\neq C''$, then the coloring extends to
a $3$-coloring of~$C'$ satisfying the demand of~$C'$ not in~$C''$, since
$\varphi(u)=3$.  Observe that in all the cases, the resulting $3$-coloring of~$C$
satisfies $\alpha_1$-fraction of its demands.  This is a contradiction, showing
that \refclaim{cl-2chord} holds.

\smallskip

\begin{figure}
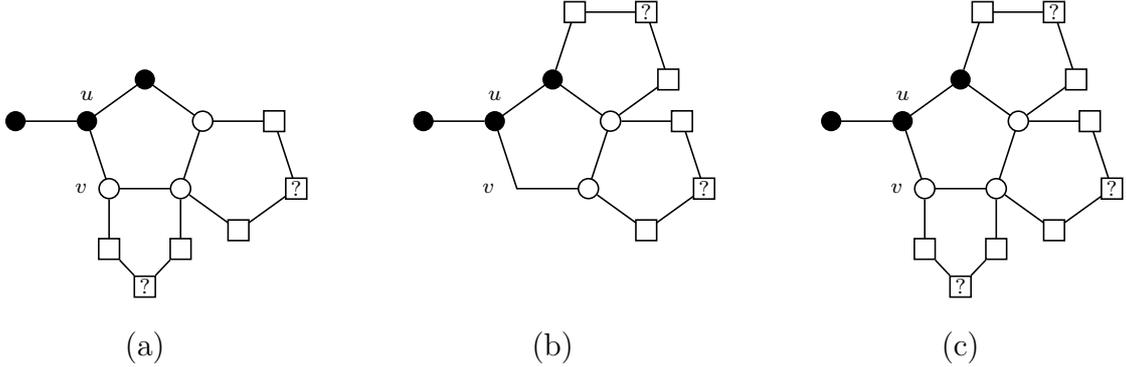

      \begin{center}
\begin{tikzgraph}
     (0,0) node(u){};
   \draw[edge] (90:\R) node[p]{} -- (162:\R) node[p,label=above:$u$](o){} -- (234:\R) node[n,label=left:$v$](w){} -- (306:\R) node[n](v){} -- (378:\R) node[n](u){} -- cycle (270:\R) node[below=2*\R]{{\normalsize(a)}};
   \draw[edge] (u) -- ++(0:\RR) node[s]{} -- ++(-72:\RR) node[s]{?} -- ++(-144:\RR) node[s]{} -- (v);
   \draw[edge] (w) -- ++(270:\R) node[s](ww){};
   \draw[edge] (v) -- ++(270:\R) node[s](vv){};
   \node[draw,edge,s] at ($(ww)!0.5!(vv)+(0,-.5)$)(l){?};
\draw[edge] (ww)--(l)--(vv);
   \draw[edge] (o) -- ++(-\RR,0) node[p]{};
\end{tikzgraph}
\hfill
\begin{tikzgraph}
     (0,0) node(u){};
   \draw[edge] (90:\R) node[p](top){} -- (162:\R) node[p,label=above:$u$](o){} -- (234:\R) node[label=left:$v$](w){} -- (306:\R) node[n](v){} -- (378:\R) node[n](u){} -- cycle (270:\R) node[below=2*\R]{\normalsize(b)};
   \draw[edge] (u) -- ++(0:\RR) node[s]{} -- ++(-72:\RR) node[s]{?} -- ++(-144:\RR) node[s]{} -- (v);
   
   \draw[edge] (o) -- ++(-\RR,0) node[p]{};
   \draw[edge] (top) -- ++(72:\RR) node[s]{} -- ++(0:\RR) node[s]{?} -- ++(-72:\RR) node[s]{} -- (u);
\end{tikzgraph}
\hfill
\begin{tikzgraph}
     (0,0) node(u){};
   \draw[edge] (90:\R) node[p](top){} -- (162:\R) node[p,label=above:$u$](o){} -- (234:\R) node[n,label=left:$v$](w){} -- (306:\R) node[n](v){} -- (378:\R) node[n](u){} -- cycle (270:\R) node[below=2*\R]{\normalsize(c)};
   \draw[edge] (u) -- ++(0:\RR) node[s]{} -- ++(-72:\RR) node[s]{?} -- ++(-144:\RR) node[s]{} -- (v);
   \draw[edge] (w) -- ++(270:\R) node[s](ww){};
   \draw[edge] (v) -- ++(270:\R) node[s](vv){};
   \node[draw,edge,s] at ($(ww)!0.5!(vv)+(0,-.5)$)(l){?};
\draw[edge] (ww)--(l)--(vv);

   \draw[edge] (o) -- ++(-\RR,0) node[p]{};
   \draw[edge] (top) -- ++(72:\RR) node[s]{} -- ++(0:\RR) node[s]{?} -- ++(-72:\RR) node[s]{} -- (u);
\end{tikzgraph}

      \end{center}
      \caption{Compositions of the cog~(d) with cogs~(a) from Figure~\ref{fig-obstr}.}\label{fig-d-compose}
\end{figure}

Suppose now that $\abs{V(P)}=3$ and $K$ has a chord~$uv$, where~$u$ is the middle
vertex of~$P$.  Let~$G_1$ and $G_2$ be proper induced subgraphs of~$G$ such that
$G=G_1\cup G_2$ and $uv=G_1\cap G_2$.  For~$i\in\{1,2\}$, let~$P_i$ be the path
in~$G_i$ consisting of~$uv$ and an edge of~$P$; let~$C_i=(G_i,P_i,S\cap
V(G_i)\setminus \{v\}, T\cap V(G_i)\setminus \{v\}, w\restriction (T\cap
V(G_i)\setminus \{v\}))$.  If $C_j$, for some~$j\in\{1,2\}$, does not contain any
of the subcogs depicted in Figure~\ref{fig-obstr}, then let
$C'_{3-j}=(G_{3-j},P\cap G_{3-j},S\cap V(G_{3-j}), T\cap V(G_{3-j}), w\restriction
(T\cap V(G_{3-j})))$, extend~$\psi$ to a $3$-coloring of~$C'_{3-j}$ satisfying
$\alpha_1$-fraction of its demands by the minimality of~$C$, extend the resulting
precoloring of~$P_j$ to a $3$-coloring of~$C_j$ satisfying $\alpha_1$-fraction of
its demands by the minimality of~$C$, and obtain a contradiction as before.
Hence, we can assume that for each~$i\in\{1,2\}$, the cog~$C_i$ contains one of
the subcogs depicted in Figure~\ref{fig-obstr}.  If $C_i$ contains one of the
subcogs~(b), (c), or~(d) from that figure, it is actually equal to it
by~\refclaim{cl-nosep}, \refclaim{cl-chord1} and~\refclaim{cl-2chord}, with the
exception of the subcog~(d), which can have copies of subcog~(a) attached to two
of its edges (see Figure~\ref{fig-d-compose}).  If $C_i$ contains the
subcog~$C'_i$ equal to~(a) from the figure, then since $C$ does not contain such
a subcog, we conclude that $C'_i$ contains the edge~$uv$ (and not the edge
of~$P$).  But then $G_i$ contains another chord incident with~$u$, and we can
repeat the same argument (at most once, since this chord is incident with a vertex
in~$S$ and thus cannot be followed by another copy of the cog depicted in
Figure~\ref{fig-obstr}(a)).

In conclusion, if $uv_1, \dotsc, uv_m$ are all chords incident with~$u$ in cyclic
order around~$u$, then $m\le 3$ and $C$ consists of~$P=p_1up_2$, these chords,
a path~$v_1x_1y_1v_2$ if $m=2$ and $v_1x_1y_1v_2y_2x_2v_3$ if $m=3$, with
$y_1,y_2,v_1,v_3\in S$ and $x_1,x_2\in T$, and subcogs depicted in
Figure~\ref{fig-obstr} (b), (c), or~(d) or Figure~\ref{fig-d-compose} attached to
the paths~$p_1uv_1$ and~$p_2uv_m$.  Note that if $m\ge 2$, then the demands~$x_1,
\dotsc, x_{m-1}$ can be satisfied by giving the vertices~$v_1, \dotsc, v_m$
alternating colors different from~$\psi(u)$, and if $m=1$ and $v_1\in T$, then we
can always satisfy the demand of~$v_1$ by giving it a color
in~$\{1,2\}\setminus\{\psi(u)\}$.  Similarly, at least a $2/3$-fraction of the
demands in each of the two subcogs at the ends can be satisfied with the proper
choice of color of~$v_1$ or~$v_m$ (if say $v_1\in S$ so that its color may be
forced by~$\psi$, then since $C$ is polished and does not contain the subcog (a),
it follows that the subcog cut off by~$p_1uv_1$ is either~(d) or the one 
depicted in Figure~\ref{fig-d-compose}(b); and for these, it suffices that $v_1$ will
be colored by~$1$ or~$2$ to enable us to satisfy its demands).  We conclude that
every $3$-coloring of~$P$ extends to a $3$-coloring of~$C$ satisfying
$1/4$-fraction of its demands.  This is a contradiction, showing the following.
\claim{cl-chord3}{No chord of~$K$ is incident with the middle vertex of~$P$.}

Suppose that a vertex $p\in V(P)$ is incident with a chord~$Q$, and let~$C'$ be
the $Q$-component of~$C$ cut off by~$Q$.  By \refclaim{cl-chord2}, $C'$ is the
graph depicted in Figure~\ref{fig-obstr}(a).  If $\psi(p)=3$, then observe that
any $3$-coloring of~$C$ can be modified by recoloring within~$C'$ so that the
demand of~$C'$ is satisfied.  Hence, the minimality of~$C$ implies the following.
\claim{cl-chord4}{If a chord of~$K$ is incident with a vertex~$p\in V(P)$, then
      $\psi(p)\in \{1,2\}$.}

Note that we can assume that $\abs{V(P)}\ge 2$, as otherwise we can include
another vertex in~$P$ without creating the subcog depicted in
Figure~\ref{fig-obstr}(a).  Next, we prove the following.
\claim{cl-gap1}{Let~$v_1v_2v_3$ be a path of~$G$ with $v_1,v_3\in S\cup T$ and
      $v_2\not\in T$.  Then $v_1v_2v_3$ is a subpath of~$K$.  Furthermore,  if
      $v_1,v_3\in S$, then $v_2$ is either incident with a chord or a weak
      $2$-chord of~$K$ (together with \refclaim{cl-chord2},
      \refclaim{cl-2chord}, and \refclaim{cl-chord3}, this implies that $v_1$
or~$v_3$ is an endvertex of a path of length two in~$G[S\cup T]$).}

Suppose for a contradiction that this is not the case.  Note that $v_1v_2v_3$ is
a subpath of~$K$ by \refclaim{cl-chord1}, \refclaim{cl-2chord}, and
\refclaim{cl-chord3}, and $v_2\not\in V(P)$ since $\abs{V(P)}\ge 2$. Assume that $v_1$ and~$v_3$ belong to~$S$,
and that $v_2$ is neither incident with a chord nor a weak $2$-chord of~$K$.
Let~$N$ be the set of neighbors of~$v_2$ distinct from~$v_1$ and~$v_3$.  Since $v_2$ is not
incident with a chord, no vertex of~$N$ belongs to~$S\cup T\cup V(P)$.  Since
$v_2$ is not incident with a weak $2$-chord, no vertex in~$N$ is adjacent to
a vertex in~$S\cup T$.  Since $G$ is triangle-free, $N$ is an independent set.
Hence, $C'=(G-v_2,P,S\cup N,T,w)$ is a polished cog.  If $C'$ does not contain any
of the subcogs depicted in Figure~\ref{fig-obstr}, then it follows from the
minimality of~$C$ that $\psi$ extends to a $3$-coloring of~$C'$ satisfying
$\alpha_1$-fraction of its demands, which can be extended to a $3$-coloring
of~$C$ by giving~$v_2$ the color~$3$. This contradicts the assumption that $C$ is
a counterexample.  Hence $C'$ contains a subcog~$C''$ depicted in
Figure~\ref{fig-obstr}.  Clearly, $C''$ contains a vertex~$y\in N$.  Furthermore,
$y$ has a neighbor~$z$ in~$C''$ that belongs to~$T$.  It follows that either
$v_2y$ is a chord or $v_2yz$ is a weak $2$-chord of~$K$, a contradiction which
establishes~\refclaim{cl-gap1}.

\smallskip
  
Without loss of generality, we can assume that $G[S\cup T]$ contains no isolated
vertices belonging to~$T$, as these can be moved into~$S$.  Let~$T_1$ and~$T_2$ be
the vertices of~$T$ belonging to paths of lengths~$1$ and~$2$ in~$G[S\cup T]$,
respectively.

Suppose that $w(T_1)\ge 2\alpha_1 w(T)$.  We let~$t_1, \dotsc, t_n$ be the
vertices of~$T_1$ in order around~$K$, where~$P$ is between~$t_n$ and~$t_1$;
without loss of generality, $w(t_1)\ge w(t_n)$.  Let~$T'_1=T_1$ if $n=1$ and
$T'_1=T_1\setminus\{t_n\}$ otherwise; we have $w(T'_1)\ge w(T_1)/2$.  Let~$L$ be
the list assignment for~$G$ such that 
\[
      L(v)=\begin{cases}
            \{\psi(v)\}&\quad\text{if~$v\in V(P)$,}\\
            \{1,2\}&\quad\text{if~$v\in S\cup T'_1$,}\\
            \{1,2,3\}&\quad\text{otherwise.}
      \end{cases}
\]
An $L$-coloring of~$G$ would yield a $3$-coloring of~$C$ that satisfies all demands
in~$T'_1$, with weight at least~$w(T_1)/2\ge \alpha_1 w(T)$. This would contradict the
assumption that $C$ is a counterexample.  Therefore, $G$ is not $L$-colorable, and
thus it violates one of the assumptions of Lemma~\ref{lemma-same}.  The
assumptions~(i) and~(ii) are clearly satisfied.  Hence, the assumption~(iii) is
violated, so $G$ contains a walk~$v_1v_2p_1p_2p_3v_3v_4$ (where $P=p_1p_2p_3$)
with $\abs{L(v_1)}=\abs{L(v_2)}=\abs{L(v_3)}=\abs{L(v_4)}=2$; i.e.,
$v_1,\dotsc,v_4\in S\cup T_1'$. Consequently, \refclaim{cl-chord1} ensures that
this walk is a subwalk of~$K$, and thus it contains both~$t_1$ and~$t_n$.  Hence,
$t_1,t_n\in T'_1$, and thus $n=1$ and $v_1=v_3$ and $v_2=v_4$.  But then $C$
contains the subcog depicted in Figure~\ref{fig-obstr}(b).  This is
a contradiction, showing that the following holds.
\claim{cl-T1}{We have $w(T_1)<2\alpha_1 w(T)$.}

We also note the following direct corollary of~\refclaim{cl-gap1}.
\claim{cl-gap1mod}{Let~$v_1v_2v_3$ be a path of~$G$ with $v_1,v_3\in S\cup T_2$
and $v_2\not\in T_2$.  Then $v_1v_2v_3$ is a subpath of~$K$, $v_1,v_3\in S$, and
$v_2$ is either incident with a chord or a weak $2$-chord of~$K$.}

A vertex~$z\in T_2$ is \emph{peripheral} if there exists either a chord or a weak
$2$-chord~$Q$ such that $z$ is contained in the $Q$-component~$C_z$ of~$C$ cut off
by~$Q$, and at
least one of the endvertices of~$Q$ is adjacent to a vertex in~$S$ not belonging
to~$C_z$.  We choose one of the endvertices of~$Q$ with this property and call it
the \emph{connector} of~$z$.
Note that \refclaim{cl-chord2}, \refclaim{cl-2chord}
and~\refclaim{cl-chord3} imply that the graph of~$C_z$ is a $5$-cycle.

Let~$T_p$ be the set of peripheral vertices and suppose that $w(T_p)\ge
48\alpha_1w(T)$.  Let~$Y$ be the set of connectors of the peripheral vertices, and
for~$y\in Y$, let us define
\[
      \omega(y)=\sum_{\substack{\text{$z\in T_p$}\\\text{with connector~$y$}}} w(z).
\]
Note that $\omega(Y)=w(T_p)$.  By Corollary~\ref{cor-dist13}, there exists an
independent set~$Y'\subseteq Y$ such that no two vertices of~$Y'$ are joined by
a path of length~$3$ in~$G$ and $\omega(Y')\ge w(T_p)/16$.  Let~$y_1, \dotsc, y_n$
be the vertices of~$Y'$ in order around~$K$, with~$P$ being contained between~$y_n$
and~$y_1$.  We consider the cycle~$y_1\dotso y_n$ built on~$Y'$ and we let~$Y''$
be an independent set in this cycle such that~$\omega(Y'')\ge\omega(Y')/3$.

Let~$G_0$ be the subgraph of~$G$ obtained by removing the vertices in~$T_p$ with their
neighbors of degree~$2$.  Let~$N$ be the set of composed of all vertices of~$G_0-P$
that are adjacent to a vertex in~$Y''$ by an edge that does not belong to~$K$.
Note that $N$ is an independent set by the choice of~$Y'$.  Also
\refclaim{cl-chord4} yields that each vertex in~$P$ adjacent to a vertex in~$Y''$
has color~$1$ or~$2$.  Consider the graph~$G_0-Y''$ with the list assignment~$L$
such that 
\[
      L(v)=\begin{cases}
            \{\psi(v)\}&\quad\text{if~$v\in V(P)$,}\\
            \{1,2\}&\quad\text{if~$v\in (S\cap V(G_0-Y''))\cup N$,}\\
            \{1,2,3\}&\quad\text{otherwise.}
      \end{cases}
\]
Any $L$-coloring of~$G_0-Y''$ can be extended
to a $3$-coloring of~$C$ by first giving vertices in~$Y''$ color~$3$ and next
coloring~$C_z$ for each~$z\in T_p$; if $C_z$ contains a vertex of $Y''$,
we can extend the coloring so that the demand of~$C_z$ is satisfied.
It follows that in the resulting $3$-coloring of~$C$, the weight of satisfied demands is
at least~$\omega(Y'')\ge w(T_p)/48\ge \alpha_1 w(T)$, which contradicts the
assumption that $C$ is a counterexample.

Therefore, $G_0-Y''$ is not $L$-colorable, and thus it violates one of the assumptions of
Lemma~\ref{lemma-same}.  The assumption~(i) is clearly satisfied.  If
a vertex~$v\in S$ is adjacent to a vertex~$x\in N$ with a neighbor~$y\in Y''$,
then either
$yx$ is a chord of~$K$ or~$yxv$ is a weak $2$-chord of~$K$, and thus $yxv$ is
a subpath of the outer face of~$G_0$.  Suppose that the assumption~(ii) is
violated for a path~$v_1v_2v_3$. Then $v_1,v_3\in N$, $v_2\in S$, and the outer
face of~$G_0$ contains a subpath~$yv_1v_2v_3y'$ with~$y,y'\in Y''$.  However, this
contradicts the choice of~$Y''$, as $y$ and~$y'$ would then be consecutive in the
cycle~$y_1\dotso y_n$.  Finally, suppose that the assumption~(iii) is violated,
and thus the outer face of~$G_0$ contains a walk $yv_1v_2p_1p_2p_3v_3v_4y'$
(where $P=p_1p_2p_3$) with~$y,y'\in Y''$, $v_1,v_4\in N$ and~$v_2,v_3\in S$.  This
implies that $\{y,y'\}=\{y_1,y_n\}$, and so the choice of~$Y''$ implies that $n=1$ and $y=y'$.
By~\refclaim{cl-nosep}, the interior of the $8$-cycle $yv_1v_2p_1p_2p_3v_3v_4$ in $G$
contains no vertices, and hence $V(G_0)=V(P)\cup\{y,v_1,v_2,v_3,v_4\}$.  This implies that $G_0-y$ is
$L$-colorable, a contradiction.  We thus conclude the following.
\claim{cl-Tp}{We have $w(T_p)<48\alpha_1w(T)$.}

Let~$S_0=S\cap V(G_0)$ and $T_0=T_2\setminus T_p$.  From now on, we consider the
cog~$C_0=(G_0,P,S_0,T_0,w\restriction T_0)$.  Note that any $3$-coloring of~$C_0$
extends to a $3$-coloring of~$C$ (without necessarily satisfying any additional
demands).  Also, the outer face of~$G_0$ is bounded by a cycle~$K_0$.
\claim{cl-nogap1}{The graph~$G_0$ contains no path~$v_1v_2v_3$ with~$v_1,v_3\in
      S_0\cup T_0$ and~$v_2\not\in T_0$.}
      
Indeed, by~\refclaim{cl-gap1mod} such a path would be a subpath of~$K$ and $v_2$
would be incident with a chord or a weak $2$-chord, implying that $v_1$ or~$v_3$
belongs to~$V(G)\setminus V(G_0)$.

For~$t\in T_0$, let~$B_t$ be the set consisting of~$t$ and its two neighbors
in~$S_0$.  By~\refclaim{cl-nogap1}, if $t$ and $t'$ are two distinct vertices
in~$T_0$, then no vertex of~$G_0$ has neighbors both in~$B_t$ and $B_{t'}$.
Let~$G'_0$ be the graph obtained from~$G_0$ by, for each~$t\in T_0$, contracting
the edges between~$t$ and its neighbors in~$S_0$, and by removing all edges among
the neighbors of~$t$ in the resulting graph (since $G_0$ has girth at least~$5$, we
know by~\refclaim{cl-nosep} that there may be only one such edge, in case that $t$ has
degree two and is incident with a $5$-face).  Note that $G'_0$ is plane and
triangle-free, and by Corollary~\ref{cor-dist13}, there exists
a set~$T'_0\subseteq T_0$ such that $w(T'_0)\ge w(T_0)/16$ and no two vertices
of~$T'_0$ are joined by a path of length~$3$ in~$G'_0$.  Consequently, if $t,t'\in
T'_0$ are distinct, then $G_0$ contains no path of length~$3$ with one end
in~$B_t$ and the other end in~$B_{t'}$.

Let~$B=\bigcup_{t\in T'_0} B_t$ and let~$N$ be the set of vertices
in~$V(G_0)\setminus B$ that have a neighbor in~$B$.  By the previous paragraph,
$N$ induces a partial matching in~$G_0$ (with each edge of~$G_0[N]$ being
contained in the neighborhood of~$B_t$ for some $t\in T'_0$ of degree two, called
the \emph{origin} of the edge).  Furthermore, vertices of~$N$ have no neighbors
in~$S_0\setminus B$ by~\refclaim{cl-nogap1}, and thus $G_0[S_0\cup N]$ is
a partial matching with the same edges as $G_0[N]$. Observe also that,
by~\refclaim{cl-chord2}, \refclaim{cl-2chord} and the construction of~$G_0$,
the endvertices of~$P$ are not adjacent to vertices incident with an edge
of~$G_0[N]$.

Let~$p_1,\dotsc,p_k,s_1, \dotsc, s_{2|N|}$ be the vertices of~$P$ and of~$B\cap S_0$
in order around the outer face of~$G_0$.  Let~$p'_1,\dotsc,p'_k$ be new vertices,
and let~$G''_0$ be the graph obtained from~$G_0$ by adding the
cycle~$K'=p'_1\dotsc p'_ks_1\dotsc s_{2|N|}$ as its outer face as well as the
edges~$p_ip'_i$ for~$i\in\{1,\dotsc,k\}$.  Let~$G'_0$ be the graph obtained
from~$G''_0-(B\cap T_0)$ by removing all edges between~$B\cap S_0$
and~$V(G'_0)\setminus V(K')$ not incident with the vertices in~$N$.  Note that
$G'_0$ forms a casing for~$G_0-B$, $P$, and~$E(G_0[N])$; let~$\prec$ be the
corresponding ordering on the vertices incident with the edges of~$G_0[N]$.

Let~$w_N$ be the sum of the weights of the origins of the edges of~$G_0[N]$.
Let~$H$ be the bipartite graph with one part consisting of the vertices in~$N$
incident with the edges of~$G_0[N]$, and the other part of the vertices
in~$V(G_0)\setminus B$ that are adjacent to them in~$G_0$, and the edge set
consisting exactly of the edges of~$G_0$ between these two parts.  Let~$H'$ be the
graph obtained from~$H$ by, for each edge~$xy$ of~$G_0[N]$ with~$x\prec y$,
subdividing all edges of~$H$ incident with~$x$ once and then identifying~$x$ and~$y$
to a single vertex.  Note that $H'$ is plane and triangle-free, and thus by
Corollary~\ref{cor-dist13}, there exists a subset~$X$ of the edges of~$G_0[N]$
such that the corresponding vertices of~$H'$ are not joined by paths of length~$3$
and the set~$T_X$ of the origins of the edges in~$X$ satisfies $w(T_X)\ge w_N/16$.

Let~$T''_0$ be the set consisting of the vertices in~$T_X$ and of the vertices
of~$T'_0$ that are not origins of any edge of~$G_0[N]$.  Note that $w(T''_0)\ge
w(T'_0)/16\ge w(T_0)/256$.  Let~$B''=\bigcup_{t\in T''_0} B_t$ and let~$N''$ be
the set of vertices of~$V(G_0)\setminus B''$ that have a neighbor in~$B''$.  By
the construction of~$H'$ and the choice of~$X$, the following holds.
\claim{cl-iii}{If $x_1y_1$ and~$x_2y_2$ are distinct edges in~$G_0[N'']$ with
      $x_1\prec y_1$ and $x_2\prec y_2$, then $x_1$ and~$y_2$ have no common
      neighbors in~$G_0$, and $y_1$ and~$x_2$ have no common neighbors in~$G_0$.}

If $\abs{V(P)}\le 2$, then let~$T'''_0=T''_0$. Otherwise, if $P=p_1p_2p_3$, we
choose~$T'''_0\subseteq T''_0$ as follows.  For~$i\in \{1,3\}$, let~$O_i$ be the
set of edges~$xy\in E(G_0[N''])$ such that there exists a path~$p_iuvxy$
in~$G_0-B''$ with $u\in S_0\cup N''$; and let~$R_i$ denote the set of origins of
the edges in~$O_i\setminus O_{4-i}$.  By symmetry, we can assume that $w(R_1)\le
w(R_3)$.  We let~$T'''_0=T''_0\setminus R_1$, and note that $w(T'''_0)\ge
w(T''_0)/2\ge w(T_0)/512$.  Let~$B'''=\bigcup_{t\in T'''_0} B_t$.

Let~$c$ be a color in~$\{1,2\}$, different from~$\psi(p_2)$ when $\abs{V(P)}=3$.
Let~$L$ be the list assignment for~$G_0-B'''$ such that 
\[
      L(v)=\begin{cases}
            \{\psi(v)\}&\quad\text{if~$v\in V(P)$,}\\
            \{1,2\}&\quad\text{if~$v\in S_0\setminus B'''$,}\\
            \{1,2,3\}\setminus\{3-c\}&\quad\text{if $v$ is adjacent to a vertex in~$T_0'''$,}\\
            \{1,2,3\}\setminus\{c\}&\quad\text{if $v$ is adjacent to a vertex in~$S_0\cap B'''$,}\\
            \{1,2,3\}&\quad\text{otherwise.}
      \end{cases}
\]
Note that $G_0-B'''$ and the list assignment~$L$ satisfy the assumptions of
Lemma~\ref{lemma-dvokaw-strong} (the condition~(i') is obviously satisfied, the
condition~(ii) holds by the choice of~$T'_0$, the condition~(iii') holds
by~\refclaim{cl-iii}, and the condition~(iv') holds by the choice of~$T'''_0$ and
the color~$c$).  Hence, $G_0-B'''$ is $L$-colorable, and we can extend this
coloring to a $3$-coloring of~$C_0$ by giving vertices of~$T'''_0$ the color~$3-c$
and the vertices of~$B'''\cap S_0$ the color~$c$.  This satisfies all demands
in~$T'''_0$, whose total weight is at least~$w(T_0)/512$.  As this $3$-coloring
extends to~$C$, we have a contradiction unless $w(T_0)/512< \alpha_1w(T)$.

However, if $w(T_0)/512< \alpha_1w(T)$ then \refclaim{cl-T1} and~\refclaim{cl-Tp} yield that
\[
      w(T)=w(T_1)+w(T_p)+w(T_0)<(2+48+512)\alpha_1w(T)=w(T),
\]
which is a contradiction. This concludes the proof.
\end{proof}

We now generalize Lemma~\ref{lemma-polished} to triangle-free non-polished cogs (allowing
now only a path with two vertices to be precolored).

\begin{lemma}\label{lemma-full}
Let~$\alpha_0=\alpha_1/9$, where~$\alpha_1$ is the constant from
Lemma~\ref{lemma-polished} (i.e., $\alpha_0=1/5058$).  Let~$C=(G,P,S,T,w)$ be
a plane cog of girth at least~$4$, where $\abs{V(P)}\le 2$.  If either
$\abs{V(P)}\le 1$ or at least one vertex of~$P$ has no neighbor in~$S$, then every
$3$-coloring of~$P$ extends to a $3$-coloring of~$C$ satisfying
$\alpha_0$-fraction of the demands.
\end{lemma}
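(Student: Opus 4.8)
The plan is to argue by contradiction: let $(C,\psi)$ with $C=(G,P,S,T,w)$ be a counterexample minimising $\abs{V(G)}$, reduce $G$ to a graph to which Lemma~\ref{lemma-polished} applies, and derive a contradiction. First I would run the standard local reductions of the kind used throughout the proof of Lemma~\ref{lemma-polished}. A minimal counterexample has no cut vertex, so its outer face is bounded by a cycle $K$; the hypothesis is easy to propagate here, because when we split off a $Q$-component the part containing $P$ keeps $P$ (and only loses vertices of $S$), while the part cut off gets a one-vertex path and satisfies the hypothesis vacuously. Next, $G$ has no separating cycle of length at most $5$: the interior of such a cycle contains no vertex of $S\cup T\cup V(P)$ — all of which lie on the outer face — so by minimality the exterior cog can be $3$-coloured with an $\alpha_0$-fraction of its demands satisfied, and the precoloring of the separating cycle then extends inward by Theorem~\ref{thm-grotzsch}. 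Finally, $G$ has no chord of $K$ incident with a vertex of $S$: splitting at such a chord $uv$ with $u\in S$, the $uv$-component containing $P$ keeps the hypothesis, and the other one has precolored path $uv$ where $u\in S$ has no neighbour in $S$, so it too satisfies the hypothesis; both are coloured by minimality and recombined.

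Next I would drive the girth up to $5$ by collapsing $4$-faces, identifying opposite vertices of each; the absence of separating $4$-cycles keeps $G$ triangle-free, and by minimality the smaller cog admits the desired $3$-coloring, which lifts back to $G$ by giving the two identified vertices their common colour. For an internal $4$-face this is immediate, since no such face meets $S\cup T\cup V(P)$. For a $4$-face meeting $K$ one identifies the pair of opposite vertices lying, as much as possible, off $K$, and checks that the cog structure, the independence of $S$, the position of $P$, and the facts just established (no $S$-chords, hence each vertex of $S$ has at most two neighbours in $T$) all survive; the case analysis involved is of the same nature as the chord and weak-$2$-chord arguments in the proof of Lemma~\ref{lemma-polished}. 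From now on $G$ has girth at least $5$, outer cycle $K$, no chord of $K$ at $S$, and every vertex of $S$ has at most two neighbours in $T$, namely its two neighbours on $K$.

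Now I would pass to a polished cog at the cost of a factor $9$. Since $S\cup T\subseteq V(K)$, the graph $G[T]$ is outerplanar, hence $3$-colourable; let $T_1$ be a heaviest colour class, so that $w(T_1)\ge w(T)/3$ and $T_1$ is independent in $G$. By the previous paragraph, any two vertices of $T_1$ joined by a path $tst'$ with $s\in S$ are at distance exactly two along $K$, so the conflict graph $J$ on $T_1$ recording such pairs is a subgraph of the distance-two graph of the cycle $K$, a disjoint union of cycles, and is therefore $3$-colourable; let $T^\ast$ be a heaviest colour class of $J$, so $w(T^\ast)\ge w(T_1)/3\ge w(T)/9$. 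Then $C^\ast:=(G,P,S,T^\ast,w\restriction T^\ast)$ is a polished plane cog of girth at least $5$ with $\abs{V(P)}\le 2$, and it contains none of the subcogs of Figure~\ref{fig-obstr}: subcogs (b), (c), (d) each have a three-vertex precolored path, impossible when $\abs{V(P)}\le 2$, while subcog (a) would require both vertices of $P$ to have a neighbour in $S$, contradicting the hypothesis, which $C^\ast$ inherits from $C$.

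Finally, Lemma~\ref{lemma-polished} applied to $C^\ast$ and $\psi$ yields a $3$-coloring $\varphi$ of $C^\ast$ — equivalently a $3$-coloring of $G$ with $\varphi(v)\in\{1,2\}$ for every $v\in S$ — satisfying an $\alpha_1$-fraction of the demands of $C^\ast$, so that $w(\varphi^{-1}(\{1,2\})\cap T)\ge w(\varphi^{-1}(\{1,2\})\cap T^\ast)\ge\alpha_1 w(T^\ast)\ge\frac{\alpha_1}{9}\,w(T)=\alpha_0\,w(T)$. Thus $\varphi$ is a $3$-coloring of $C$ extending $\psi$ and satisfying an $\alpha_0$-fraction of its demands, contradicting the choice of $(C,\psi)$. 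The step I expect to be the main obstacle is the elimination of $4$-faces meeting the outer cycle: collapsing such a face must be carried out while keeping the graph plane and triangle-free, keeping $P$ on the outer face and $S$ independent, and not reintroducing $S$-chords or $S$-vertices with three neighbours in $T$ — this is where most of the bookkeeping lives, and it is the reason one works with the restricted class of cogs with $\abs{V(P)}\le 2$ obeying the extra hypothesis rather than directly generalising Lemma~\ref{lemma-polished}.
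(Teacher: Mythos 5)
Your overall strategy is the same as the paper's: a minimal counterexample is $2$-connected with outer cycle $K$, every $(\le\!5)$-cycle bounds a face, $K$ has no chord at a vertex of $S$ (the paper works with $S'=S$ plus the $P$-vertex without $S$-neighbours), $4$-faces are eliminated by identifying opposite vertices, and the resulting girth-$5$ cog is thinned to a polished one at the cost of a factor $9$ before invoking Lemma~\ref{lemma-polished}. Your route to the polished subcog (a $3$-colouring of the outerplanar graph $G[T]$ to get an independent $T_1$, then a $3$-colouring of the distance-two graph of $K$ to kill the paths $tst'$) is a legitimate variant of the paper's (Grötzsch's theorem for the first colouring, the same distance-two colouring of $K$ for the second), and you are right — and more explicit than the paper — that the hypothesis ``$\abs{V(P)}\le 1$ or some vertex of $P$ has no $S$-neighbour'' is exactly what excludes the obstructions of Figure~\ref{fig-obstr}.

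The genuine gap is in the $4$-face collapse, and although you flag it as the main obstacle, you misdiagnose where the difficulty lies. It is not primarily about preserving planarity, triangle-freeness, or the independence of $S$: the paper always identifies the pair $v_1,v_3$ of opposite vertices \emph{not} in $S'$ (such a pair exists since $S'$ is independent), and the structure then survives automatically. The real problem is the demand accounting in the case $v_1\in V(P)$ with $\psi(v_1)=3$ and $v_3\in T$: the identification forces $v_3$ to receive colour $3$, so its demand is irrecoverably unsatisfied and the $\alpha_0$-fraction bound for the contracted cog does not transfer back. Your fallback of ``identifying the pair lying as much as possible off $K$'' does not escape this case, because the other pair $v_2,v_4$ may contain a vertex of $S'$, and identifying through a vertex of $S$ can put an $S$-vertex adjacent to another $S$-vertex or otherwise break the list constraints. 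The paper needs a separate argument here: it first reduces to $v_2\in S'$, observes that then $v_1v_2v_3$ lies on $K$ and $v_2$ has degree two, deletes $v_2$, colours the rest by minimality, and re-inserts $v_2$ using the fact that $\psi(v_1)=3$ together with the freedom to swap colours $1$ and $2$ globally so that $\varphi(v_3)\neq\psi(v_2)$. That idea — deletion of a degree-two vertex of $S'$ plus a colour swap, rather than any identification — is missing from your sketch and is not ``of the same nature as the chord and weak $2$-chord arguments''; without it the induction does not close.
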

\begin{proof} Suppose for a contradiction that $C$ is a counterexample
      with~$\abs{V(G)}$ as small as possible, and let $\psi$ be a $3$-coloring of~$P$
      that does not extend to a $3$-coloring of~$C$ satisfying 
      $\alpha_0$-fraction of the demands.  Clearly, $G$ is connected and all vertices not
      belonging to~$S\cup T\cup V(P)$ have degree at least three.

Also, $G$ is $2$-connected: otherwise, let~$v$ be a cutvertex of~$G$, and
let~$C_1$ and~$C_2$ be the $v$-components of~$C$.  By the minimality of~$C$, the precoloring~$\psi$
extends to a $3$-coloring~$\varphi_1$ of~$C_1$ satisfying
$\alpha_0$-fraction of its demands.  Furthermore, the $3$-coloring of~$v$ by
color~$\varphi_1(v)$ extends to a $3$-coloring~$\varphi_2$ of~$C_2$ satisfying
$\alpha_0$-fraction of its demands.  The combination of~$\varphi_1$ and
$\varphi_2$ is a $3$-coloring of~$C$ satisfying $\alpha_0$-fraction of its
demands, which contradicts the assumption that $C$ is a counterexample.

Hence, the outer face of~$G$ is bounded by a cycle~$K$.
If $\abs{V(P)}\le 1$, then let~$S'=S$, otherwise let~$S'$ consist of~$S$ and a vertex of~$P$ that has no neighbor in~$S$.  
Suppose that $K$ has
a chord~$uv$, where~$u\in S'$.  Let~$C_1$ and~$C_2$ be the $uv$-components of~$C$.
Note that $u$ has no neighbor in~$S$, and thus $C_2$ satisfies the assumptions of
Lemma~\ref{lemma-full}.  Hence, we obtain a contradiction as in the previous
paragraph, and we conclude that $K$ has no chords incident with vertices in~$S'$.

By Theorem~\ref{thm-grotzsch}, it similarly follows that the open subset of the
plane contained inside any $(\le\!5)$-cycle in~$G$ is a face of~$G$.  Suppose that $G$
contains a $4$-face~$f=v_1v_2v_3v_4$.  If $f$ is the outer face, then we conclude that
$V(G)=\{v_1,v_2,v_3,v_4\}$ and it is easy to verify that every $3$-coloring
of~$P$ extends to a $3$-coloring of~$C$ satisfying $\alpha_0$-fraction of its
demands.  Hence, $f$ is not the outer face.  

Since $S'$ is an independent set, we can by symmetry assume that $v_1,v_3\not\in S'$.  
Furthermore, $G$ contains no path~$v_1xyv_3$ of length
three: otherwise, the face~$f$ would be contained in the interior of one of the
$5$-cycles~$v_1xyv_3v_2$ and~$v_1xyv_3v_4$, thereby contradicting our previous
conclusion that the interior of each $5$-cycle of~$G$ is a face.  Let~$C'$ be the cog
obtained from~$C$ by identifying~$v_1$ with~$v_3$ to a new vertex~$v$ (if
both~$v_1$ and~$v_3$ belong to~$T$, then $v$ has weight $w(v_1)+w(v_3)$ in~$C'$).
Note that $C'$ satisfies all the assumptions of Lemma~\ref{lemma-full}, and by the
minimality of~$C$, every $3$-coloring of~$P$ extends to a $3$-coloring of~$C'$
satisfying $\alpha_0$-fraction of its demands.  We can extend this $3$-coloring
to~$C$ by giving both~$v_1$ and~$v_3$ the color of~$v$.  Observe that the
resulting $3$-coloring satisfies $\alpha_0$-fraction of the demands of~$C$, unless
say $v_1\in V(P)$, $\psi(v_1)=3$ and $v_3\in T$.  Since $C$ is a counterexample,
the latter must be the case.

If $v_2,v_4\not\in S'$, we can identify~$v_2$ with~$v_4$ instead and obtain
a contradiction in the same way.  Hence, we can assume that $v_2\in S'$.  Since
$K$ has no chords incident with vertices in~$S'$, we conclude that $v_1v_2v_3$ is
a subpath of~$K$ and $v_2$ has degree two.  By the minimality of~$C$, there exists
a $3$-coloring~$\varphi$ of the subcog of~$C$ obtained by removing~$v_2$,
extending~$\psi\restriction (V(P)\setminus\{v_2\})$ and satisfying
$\alpha_0$-fraction of the demands.  If $v_2\in S$, then we can give~$v_2$ a color
in~$\{1,2\}\setminus\{\varphi(v_3)\}$, since $\psi(v_1)=3$.  If $v_2\in V(P)$,
then we can assume that $\varphi(v_3)\neq\psi(v_2)$, since $\psi(v_1)=3$,
$\psi(v_2)\in\{1,2\}$, and exchanging colors~$1$ and~$2$ in the coloring~$\varphi$
keeps the same weight of satisfied demands.  In either case, we obtain
a contradiction with the assumption that $C$ is a counterexample.  It follows that
$G$ has girth at least five.

By Theorem~\ref{thm-grotzsch}, there exists a $3$-coloring~$\psi_1$ of~$G$.
We write~$K=v_1v_2\dotsc v_k$, and note that there exists an assignment~$\psi_2$ of
colors in~$\{1,2,3\}$ to the vertices of~$K$ so that no two vertices at distance (in~$K$) exactly
two from each other have the same color.  Let~$T_1$ be a subset of~$T$ of maximum
weight that is monochromatic both in~$\psi_1$ and in~$\psi_2$; clearly, $w(T_1)\ge
w(T)/9$.  Since $T_1$ is monochromatic in~$\psi_1$, it is an independent set
in~$G$.  Since $K$ has no chords incident with vertices of~$S$, if $v_i\in S$ has a neighbor~$v_j\in T$, then
$j\in\{i-1,i+1\}$, with indices taken cyclically, and since $T_1$ is monochromatic
in~$\psi_2$, at most one such neighbor belongs to~$T_1$.  Hence, $G$ contains no
path~$u_1u_2u_3$ with~$u_2\in S$ and $u_1,u_3\in T_1$.

Therefore, $C'=(G,P,S,T_1,w\restriction T_1)$ is a polished plane cog of girth at
least~$5$, and by Lemma~\ref{lemma-polished}, every $3$-coloring of~$P$ extends to
a $3$-coloring~$\varphi$ of~$C'$ that satisfies $\alpha_1$-fraction of its
demands.  Note that $\varphi$ is also a $3$-coloring of~$C$, and since $w(T_1)\ge
w(T)/9$, it satisfies $(\alpha_1/9)$-fraction of the demands of~$C$.  This
contradicts the assumption that $C$ is a counterexample.
\end{proof}

The result on request graphs with only non-equality requests all at a single vertex
now readily follows.

\begin{proof}[Proof of Corollary~\ref{cor-single}] Let~$v$ be a common neighbor of vertices of
      $R_{\neq}$, and let~$T$ be the set of neighbors of vertices of~$R_{\neq}$ not equal
      to~$v$. For~$t\in T$, let us define $w'(t)=\sum_{r\in
            R_{\neq},tr\in E(G)} w(r)$.  Let~$S$ be the set of neighbors of~$v$ not belonging
            to~$R_{\neq}$.  Let~$C=(G-(R_{\neq}\cup\{v\}), \varnothing, S,T,w')$, and note that
            $C$ is a plane cog of girth at least~$4$.  By Lemma~\ref{lemma-full},
            there exists a $3$-coloring of~$C$ satisfying $\alpha_0$-fraction
            of its demands.  By giving~$v$ the color~$3$ and coloring vertices
            of~$R_{\neq}$ by colors different from the colors of their neighbors, we
            obtain a $3$-coloring of~$G$ that satisfies $\alpha_0$-fraction of
            its requests, as required.
\end{proof}


\begin{thebibliography}{10}

\bibitem{submany}
{\sc A.~Asadi, Z.~Dvo{\v{r}}{\'a}k, L.~Postle, and R.~Thomas}, {\em
  Sub-exponentially many 3-colorings of triangle-free planar graphs}, J.
            Combin. Theory, Ser.~B, 103(6):706--712 (2013).

\bibitem{dk}
{\sc Z.~Dvo\v{r}\'ak and K.~Kawarabayashi}, {\em {Choosability of planar graphs
  of girth~$5$}}, ArXiv, 1109.2976 (2011).

\bibitem{DvoKawTho}
{\sc Z.~Dvo\v{r}\'ak, K.~Kawarabayashi, and R.~Thomas}, {\em Three-coloring
  triangle-free planar graphs in linear time}, Trans. on Algorithms, 7 (2011),
  article no. 41.

\bibitem{cylgen-part2}
{\sc Z.~Dvo\v{r}\'ak and B.~Lidick\'y}, {\em Fine structure of $4$-critical
  triangle-free graphs {II}. {P}lanar triangle-free graphs with two precolored
            $4$-cycles}, SIAM J. Discrete Math., 31(2):865--874 (2015).

\bibitem{grotzsch1959}
{\sc H.~Gr{\"o}tzsch}, {\em Ein {D}reifarbensatz f\"{u}r {D}reikreisfreie
  {N}etze auf der {K}ugel}, Math.-Natur. Reihe, 8:109--120 (1959).

\bibitem{JT00}
{\sc T.~Jensen and C.~Thomassen}, {\em The color space of a graph}, J. Graph
            Theory, 34(3):234--245 (2000).

\bibitem{homclebsch}
{\sc R.~Naserasr}, {\em Homomorphisms and edge-colourings of planar graphs},
            Journal of Combinatorial Theory, Series B, 97(3):394--400 (2007).

\bibitem{thom-torus}
{\sc C.~Thomassen}, {\em Gr{\"o}tzsch's 3-color theorem and its counterparts
            for the torus and the projective plane}, J. Combin. Theory, Ser.~B, 62(2):268--279 (1994).

\bibitem{thomassen1995-34}
\leavevmode\vrule height 2pt depth -1.6pt width 23pt, {\em 3-list-coloring
            planar graphs of girth 5}, J. Combin. Theory, Ser.~B, 64(1):101--107 (1995).

\bibitem{thomassen-surf}
\leavevmode\vrule height 2pt depth -1.6pt width 23pt, {\em The chromatic number
            of a graph of girth 5 on a fixed surface}, J. Combin. Theory, Ser.~B, 87(1):38--71 (2003).

\bibitem{ThoShortlist}
\leavevmode\vrule height 2pt depth -1.6pt width 23pt, {\em A short list color
            proof of {G}r{\"o}tzsch's theorem}, J. Combin. Theory, Ser.~B, 88(1):189--192 (2003).

\bibitem{thom-many}
\leavevmode\vrule height 2pt depth -1.6pt width 23pt, {\em Many $3$-colorings
            of triangle-free planar graphs}, J. Combin. Theory, Ser.~B, 97(3):334--349 (2007).

\bibitem{voigt1995}
      {\sc M.~Voigt}, {\em A not 3-choosable planar graph without 3-cycles}, Discrete Math., 146(1-3):325--328 (1995).

\end{thebibliography}
\end{document}